\definecolor{gris25}{gray}{0.75}
\definecolor{mycolor}{rgb}{0, 0, 0.1}
\newmdenv[innerlinewidth=0.5pt, roundcorner=4pt,linecolor=mycolor,innerleftmargin=6pt,
innerrightmargin=6pt,innertopmargin=6pt,innerbottommargin=6pt]{mybox}
\newcommand{\Ec}[1]{\mathbb{E} \left[#1\right]}
\newcommand{\abs}[1]{\left\lvert#1\right\rvert}
\newcommand{\Pp}[1]{\mathbb{P} \left(#1\right)}
\newcommand{\enstq}[2]{\left\lbrace#1\mathrel{}\middle|\mathrel{}#2\right\rbrace}
\newcommand{\Ppsq}[2]{\mathbb{P} \left(#1\mathrel{}\middle|\mathrel{}#2\right)}
\newcommand{\Ecsq}[2]{\mathbb{E} \left[#1\mathrel{}\middle|\mathrel{}#2\right]}
\newenvironment{steplist}
{ \begin{list}%
		{$\bullet$}%
		{\setlength{\labelwidth}{100pt}%
			\setlength{\leftmargin}{80pt}%
			\setlength{\itemsep}{\parsep}}}%
	{ \end{list} }
\newcommand{\intervalle}[4]{\mathopen{#1}#2
	\mathclose{}\mathpunct{},#3
	\mathclose{#4}}
\newcommand{\intervalleff}[2]{\intervalle{[}{#1}{#2}{]}}
\newcommand{\intervallefo}[2]{\intervalle{[}{#1}{#2}{)}}
\newcommand{\intervalleoo}[2]{\intervalle{(}{#1}{#2}{)}}
\newcommand{\ind}[1]{\mathbf{1}_{\left\lbrace #1 \right\rbrace}} 
\newcommand{\E}[1]{\ensuremath{\mathbb{E} \left[#1 \right]}}
\newcommand{\Prob}[1]{\ensuremath{\mathbb{P} \left(#1 \right)}}
\newcommand{\I}[1]{\ensuremath{\mathbbm{1}_{ \left\{ #1 \right\} }}}
\newcommand{\R}{\ensuremath{\mathbb{R}}}
\newcommand{\A}{\ensuremath{\mathbb{T}}}
\newcommand{\M}{\ensuremath{\mathbb{M}}}
\newcommand{\Z}{\ensuremath{\mathbb{Z}}}
\newcommand{\N}{\ensuremath{\mathbb{N}}}
\renewcommand{\subset}{\subseteq}
\newcommand{\convdist}{\ensuremath{\overset{\mathrm{d}}{\rightarrow}}}
\newcommand{\convprob}{\ensuremath{\stackrel{p}{\rightarrow}}}
\newcommand{\equidist}{\ensuremath{\stackrel{\mathrm{(d)}}{=}}}
\newcommand{\seg}[2]{\left[ \! \left[ #1 , #2 \right] \! \right]}
\newcommand{\oseg}[2]{\, \left] \! \left] #1 , #2 \right[ \! \right[ \,}
\newcommand{\supp}{\ensuremath{\mathrm{supp}}}
\newcommand{\cG}{\mathcal{G}}
\newcommand{\cT}{\mathcal{T}}
\newcommand{\cA}{\mathcal{A}}
\newcommand{\sG}{\mathsf{G}}
\newcommand{\sT}{\mathsf{T}}
\newtheorem{theorem}{Theorem}[section]
\newtheorem{defn}[theorem]{Definition}
\newtheorem{lem}[theorem]{Lemma}
\newtheorem{prop}[theorem]{Proposition}
\newtheorem{cor}[theorem]{Corollary}
\newtheorem{rem}[theorem]{Remark}
\date{}
\title{Stable graphs: distributions and line-breaking construction}
\author{Christina Goldschmidt\thanks{Department of Statistics and Lady Margaret Hall, University of Oxford  \newline \hspace*{0.5cm} E-mail: goldschm@stats.ox.ac.uk}\quad  B\'en\'edicte Haas\thanks{Universit\'e Sorbonne Paris Nord, LAGA, CNRS (UMR  7539) 93430 Villetaneuse, France \newline \hspace*{0.5cm} E-mail: haas@math.univ-paris13.fr} \quad and \quad Delphin S\'enizergues\thanks{Mathematics Department, University of British Columbia, 1984 Mathematics Road, Vancouver, BC V6T 1Z2  \newline \hspace*{0.5cm} E-mail: senizergues@math.ubc.ca}}
\begin{document}
\maketitle

\abstract{For $\alpha \in (1,2]$, the $\alpha$-stable graph arises as the universal scaling limit of critical random graphs with i.i.d.\ degrees having a given $\alpha$-dependent power-law tail behavior. It consists of a sequence of compact measured metric spaces (the limiting connected components), each of which is tree-like, in the sense that it consists of an $\R$-tree with finitely many vertex-identifications (which create cycles). Indeed, given their masses and numbers of vertex-identifications, these components are independent and may be constructed from a spanning $\R$-tree, which is a biased version of the $\alpha$-stable tree, with a certain number of leaves glued along their paths to the root. In this paper we investigate the geometric properties of such a component with given mass and number of vertex-identifications. We (1) obtain the distribution of its kernel and more generally of its discrete finite-dimensional marginals, and observe that these distributions are themselves related to the distributions of certain configuration models; (2) determine the distribution of the $\alpha$-stable graph as a collection of $\alpha$-stable trees glued onto its kernel; and (3) present a line-breaking construction, in the same spirit as Aldous' line-breaking construction of the Brownian continuum random tree.}

\begin{figure}[h]
	\centering
	\includegraphics[height=8.2cm]{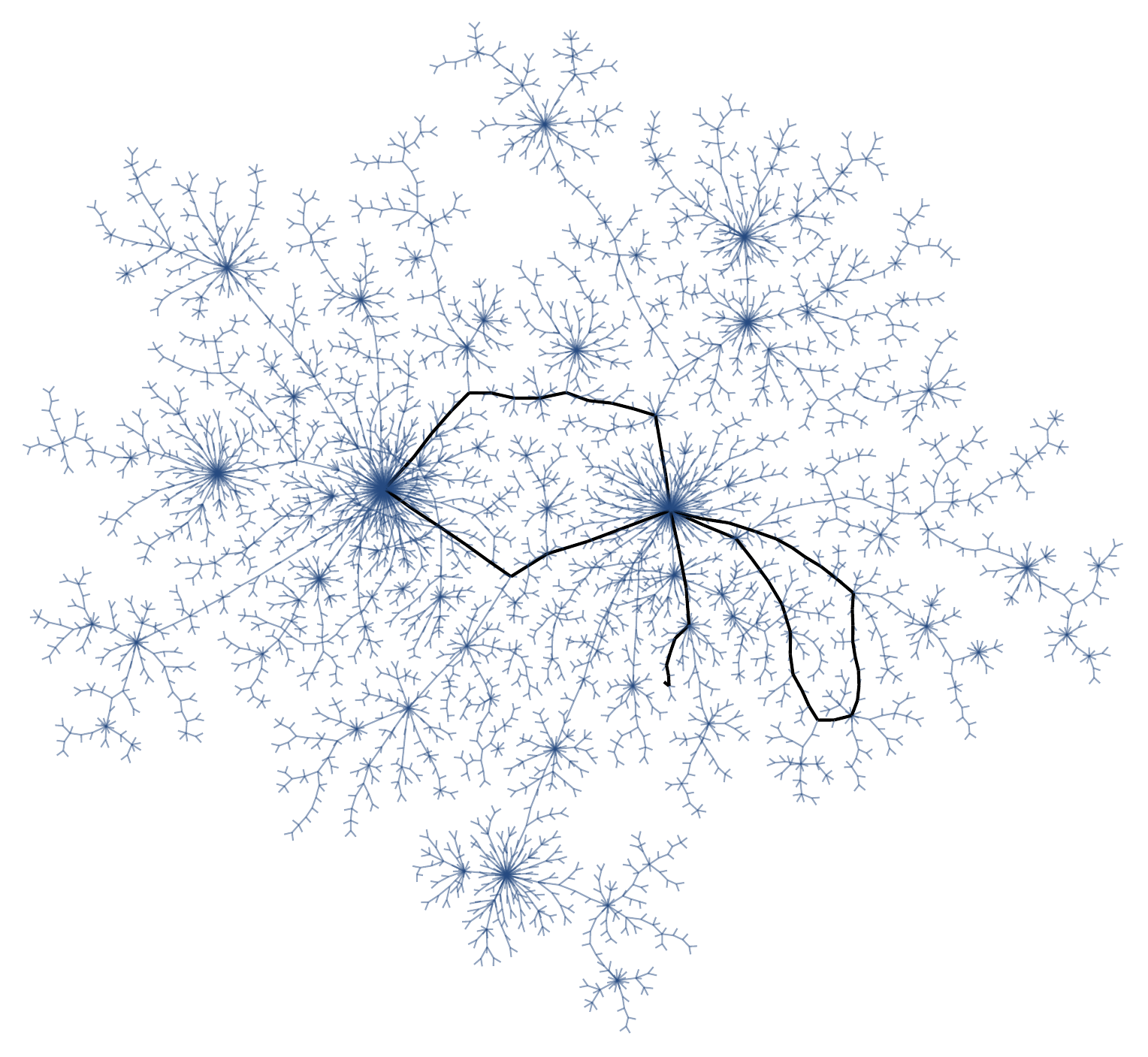}
	\caption{A simulation of a connected component of the stable graph when $\alpha=1.5$ and the surplus is 2. The cycle structure is shown in black.}\label{fig:stable bunny}
\end{figure}

\noindent \textbf{Keywords:} random graphs with given degree sequence, configuration model, scaling limit, stable trees, urn models.

\setlength\parindent{0pt}
\setlength\parskip{10pt}

\section{Introduction and main results}
\subsection{Motivation} \label{sec:motivation}

The purpose of this paper is to understand the distributional properties of the scaling limit of a critical random graph with independent and identically distributed degrees having certain power-law tail behaviour.  Let us first describe the random graph model precisely. Let $D_1, D_2, \ldots, D_n \in \mathbb N$ be independent and identically distributed random variables such that  $\E{D_1^2}<\infty$.  We build a graph with vertices labelled by $1, 2, \ldots, n$.  For $i=1,\ldots,n-1$, let vertex $i$ have degree $D_i$.  If $\sum_{i=1}^n D_i$ is even, let vertex $n$ have degree $D_n$; otherwise, let vertex $n$ have degree $D_n + 1$.  Now pick a simple graph $G_n$ uniformly at random from among those with these given vertex degrees (at least one such graph exists with probability tending to 1 as $n \to \infty$). 

Molloy and Reed \cite{MolloyReed} showed that there is a phase transition in the sizes of the connected components: if the parameter $\nu:=\mathbb E[D_1(D_1-1)]/\mathbb E[D_1]$ is larger than 1 there exists a unique \emph{giant component} of size proportional to $n$, while if $\nu$ is smaller than or equal to 1 there is no giant component. We will here tune the degree distribution so  as to be exactly at the point of the phase transition, i.e.\ $\nu = 1$.  The component size behaviour is here at its most delicate: even after performing the correct rescaling and taking a limit, there is residual randomness in the sequence of component sizes. For the questions in which we are interested, the critical case with $\E{D_1^3} < \infty$ has already been thoroughly investigated in previous work, which we summarise in Section~\ref{sec:brownian}. So we will rather assume that the degree distribution has infinite third moment and a specific power-law behaviour.  Henceforth, fix $1 <\alpha <2$ and assume that
\begin{equation}
\label{hyp:stable}
\nu = 1 \quad \text{and} \quad \Prob{D_1 = k} \sim c k^{-2-\alpha} \quad \text{as } k \to \infty,
\end{equation}
where  $c > 0$ is constant. (Note that $\nu = 1$ is equivalent to $\E{D_1^2} = 2 \E{D_1}$.)

The analogous model of a random \emph{tree} is a Galton-Watson tree with critical offspring distribution in the domain of attraction of an $\alpha$-stable law. In that case, there is a well-known scaling limit, the $\alpha$-stable tree~\cite{Du03}.  We will explore the relationship between these two models, at the level of scaling limits, in the sequel.   

It is now standard to formulate random graph scaling limits in terms of sequences of \emph{measured metric spaces}, namely metric spaces endowed with a measure.  Throughout this paper we let $(\mathscr C, \mathrm d_{\mathrm{GHP}})$ denote the set of measured isometry-equivalence classes of compact measured metric spaces equipped with the Gromov-Hausdorff-Prokhorov topology (see, for example, Section~2.1 of \cite{ABBrGoMi} for the formulation we use here)
and endow it with the associated Borel $\sigma$-algebra.  (We will often elide the difference between a measured metric space and its equivalence class but it should be understood that we are really thinking about the equivalence class.)  As we are dealing with graphs which have many components, we need a topology on sequences of (equivalence classes of) compact measured metric spaces.  For this purpose, we use the product Gromov-Hausdorff-Prokhorov topology and observe that it is standard that this yields a Polish space.

Write $C_1^n, C_2^n, \ldots$ for the vertex-sets of the components of the graph $G_n$, listed in decreasing order of size (with ties broken arbitrarily).  Set
\begin{equation}
\label{A_alpha}
A_{\alpha} =\left(\frac{c \Gamma(2-\alpha)}{\alpha(\alpha-1)}\right)^{1/(\alpha+1)}.
\end{equation}
We think of the components as metric spaces by endowing each one with a scaled version of the usual graph distance, $\mathrm d_{\text{gr}}$: let
\[
d_i^n := \frac{A_{\alpha}^2}{\mathbb E[D_1] n^{(\alpha-1)/(\alpha+1)}} \mathrm d_{\text{gr}}
\]
be the distance in $C_i^n$.  We also endow each of them with the scaled counting measure 
\[
\mu_i^n := \frac{A_{\alpha}}{\mathbb E[D_1] n^{\alpha/(\alpha+1)}} \sum_{v \in C_i^n} \delta_v.
\]
Let $\mathrm{C}_i^n = (C_i^n, d_i^n, \mu_i^n)$ be the resulting measured metric space.  We write $s(\mathrm{C}^n_i)$ for the number of \emph{surplus edges} (i.e.\ edges more than a tree) possessed by the component $\mathrm{C}^n_i$. Formally, for a connected graph $G=(V,E)$, the number of surplus edges or, more succinctly, \emph{surplus}, is defined to be $$s(G)=|E|-|V|+1.$$
The following theorem is proved in \cite{CKG17+}. 

\begin{theorem}
\label{th:C-K+G}
As $n \to \infty$, 
\[
(\mathrm{C}_1^n, \mathrm{C}_2^n, \ldots) \convdist (\mathrm{C}_1, \mathrm{C}_2, \ldots),
\]
with respect to the product Gromov-Hausdorff-Prokhorov topology, for a random sequence of measured metric spaces $(\mathrm{C}_1, \mathrm{C}_2, \ldots)$ which we call the \emph{$\alpha$-stable graph}.
\end{theorem}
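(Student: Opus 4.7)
My plan is to reduce to the configuration model. Conditionally on the degree sequence, $G_n$ is a uniform simple graph with those degrees, which in turn is a configuration model conditioned to be simple; a standard moment calculation shows that under \eqref{hyp:stable} this conditioning event has probability bounded away from zero, so it suffices to prove the convergence for the unconditioned configuration model. The i.i.d.\ power-law hypothesis guarantees, via classical heavy-tail estimates, that the largest degrees rescaled by $n^{1/(\alpha+1)}$ converge jointly to the points of an explicit Poisson process, while $\sum_{i=1}^n D_i/n \to \E{D_1}$ by the law of large numbers, conditions on which the remainder of the argument can be made quenched.

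The core step is a depth-first exploration of the configuration model: at each stage, pair the current ``active'' half-edge with a uniformly chosen unpaired half-edge, and record the associated \L{}ukasiewicz-type walk $X^n$ that tracks the number of unmatched half-edges attached to active vertices. Connected components are in bijection with excursions of $X^n$ above its running infimum. I would then prove that the rescaled walk $n^{-(\alpha-1)/(\alpha+1)} X^n(\fl{n^{\alpha/(\alpha+1)} t})$ converges in the Skorokhod topology to an explicit excursion-type process built from a spectrally positive $\alpha$-stable L\'evy process, tilted to take into account the size-biasing by degree inherent to the exploration. The constant $A_\alpha$ arises as the normalisation that matches the stable tail constant $c\Gamma(2-\alpha)/(\alpha(\alpha-1))$.

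From this walk one extracts both the component masses (as excursion lengths) and, via the standard Duquesne--Le Gall machinery applied to the associated height process, the spanning $\R$-trees of the components as biased $\alpha$-stable trees. Surplus edges appear during the exploration whenever the chosen partner half-edge is attached to a vertex already discovered in the current excursion; in the limit they form, conditionally on the excursion, a Poisson point process of identifications whose intensity is governed by the local time of the height process. This yields componentwise GHP convergence $\mathrm C_i^n \to \mathrm C_i$ for each fixed $i$. To upgrade this to convergence in $(\mathscr{L}_{2\alpha/(\alpha-1)},\mathrm{dist}_{2\alpha/(\alpha-1)})$ one must bound $\sum_{i} \mathrm d_{\mathrm{GHP}}(\mathrm C_i^n, *)^{2\alpha/(\alpha-1)}$ uniformly in $n$; the exponent $p = 2\alpha/(\alpha-1)$ is precisely the one for which both the mass and diameter contributions of small components are summable against the $x^{-\alpha-1}\d x$ intensity of excursion lengths of the limiting Lévy process.

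The main obstacle, I expect, is the joint convergence of the exploration walk and the \emph{locations} of the surplus identifications along each limiting component. The functional convergence of $X^n$ itself is classical but heavy-tailed and requires care with the size-biased starting rule; on top of that one must pin down the asymptotic joint law of the rare matching events with the walk, proving that they converge to a Poisson process with intensity given by local time (not simply length measure) along the limit tree. A secondary technical difficulty lies in the sharp diameter tail estimate needed to close the $\mathscr{L}_p$ argument: it requires quantitative control on the maxima of small discrete excursions and their stable counterparts uniformly in $n$.
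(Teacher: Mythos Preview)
The paper does not actually prove Theorem~\ref{th:C-K+G}: immediately before the statement it says ``The following theorem is proved in \cite{CKG17+}'', and thereafter the result is used as a black box. There is therefore no proof in the paper to compare your proposal against.

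That said, your sketch is broadly aligned with the construction of the limit object given in Section~\ref{sec:SLconfiguration}, and with what one would expect the proof in \cite{CKG17+} to contain: reduction to the configuration model, a depth-first exploration whose \L{}ukasiewicz-type walk converges to a tilted spectrally positive $\alpha$-stable process, height-process machinery for the spanning trees, surplus edges arising as an inhomogeneous Poisson process under the excursion, and a tail/diameter bound to close the $\mathscr{L}_p$ argument. One point to correct in your description: the tilting of the limit L\'evy process is not a size-bias but the Girsanov-type change of measure $\exp\big(\int_0^t s\,\mathrm{d}\xi_s - t^{\alpha+1}/(\alpha+1)\big)$ displayed in Section~\ref{sec:SLconfiguration}; this encodes the parabolic drift coming from depletion of half-edges, not from degree-biasing at the start of the exploration. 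Your identification of the two main technical obstacles (joint convergence of walk and identification points, and the uniform diameter control for small components) is accurate.
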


(In Section~\ref{sec:brownian} below we will describe the relationship of this theorem to other work.)

Theorem~\ref{th:C-K+G} also holds in the setting of a random multigraph (i.e.\ it may contain self-loops and multiple edges) sampled from the \emph{configuration model} with i.i.d.\ degrees.  Formally, a \emph{multigraph} $G$ is an ordered pair $G = (V,E)$ where $V$ is the set of vertices and $E$ the \emph{multiset} of edges (i.e.\ elements of $\{\{u,v\},\ u\in V,\ v\in V\}$). Let $\mathrm{supp}(E)$ denote the support of $E$, i.e.\ the underlying set of distinct elements of $E$, and, for $e \in \supp(E)$, let $\mathrm{mult}(e)$ denote its multiplicity. Let $\mathrm{sl}(G)$ denote the cardinality of the multiset of self-loops.  For a vertex $v \in V$, we write $\deg(v)$ for its degree, or $\deg_G(v)$ if there is potential ambiguity over which graph we are looking at. The surplus is still defined to be $s(G)=|E|-|V|+1$, where we emphasise  that $|E| = \sum_{e \in \supp(E)} \mathrm{mult}(e)$.  Let us briefly explain the set-up of the configuration model for deterministic degrees $d_1, d_2, \ldots, d_n$ with even sum. (The configuration model was introduced in varying degrees of generality in \cite{BenderCanfield, Bollobas, Wormald}.  We refer to Chapter~7 of the recent book of van der Hofstad~\cite{RvdH} for the proofs of the claims made in this paragraph.)  To vertex $i$ we assign $d_i$ half-edges, for $1 \le i \le n$.  We give the half-edges an arbitrary labelling (so that we may distinguish them) and then choose a matching of the half-edges uniformly at random.  Two matched half-edges form an edge of the resulting structure, which is a multigraph.   Then for a particular multigraph $G$ with degrees $d_1, d_2, \ldots, d_n$, the probability that the configuration model generates $G$ is 
\begin{equation} \label{eqn:config}
\frac{\prod_{i=1}^n d_i!}{(\sum_{i=1}^n d_i - 1)!! \ 2^{\mathrm{sl}(G)} \prod_{e \in \supp(E)} \mathrm{mult}(e)!},
\end{equation}
where $a!!$ denotes the double factorial of $a$.  From this expression, it is easy to see that if there exists at least one simple graph with degrees $d_1, d_2, \ldots, d_n$ then conditioning the multigraph to be simple yields a \emph{uniform} graph with the given degree sequence. We are interested in the setting where the degrees are random variables $D_1, D_2, \ldots, D_n$ satisfying the conditions (\ref{hyp:stable}) (with the small modification mentioned above to make the sum of the degrees even).  In this case, there exists a simple graph with these degrees with probability tending to 1 as $n \to \infty$, which enables us to convert results for the configuration model into results for the uniform random graph with given degree sequence; in the setting of Theorem~\ref{th:C-K+G} the conditioning turns out not to affect the result.

The $\alpha$-stable graph is constructed using a spectrally positive $\alpha$-stable L\'evy process; we give the details, which are somewhat involved, in Section~\ref{sec:SLconfiguration}. For $i \ge 1$, write $\mathrm{C}_i = (C_i, d_{C_i}, \mu_{C_i})$, $i \ge 1$.   These measured metric spaces are \emph{$\R$-graphs} in the sense of \cite{ABBrGoMi} i.e.\ they are locally $\R$-trees, but may also possess cycles. It is possible to make sense of the surpluses of the limiting components, for which we write $s(\mathrm{C}_i)$, $i \ge 1$. It is a consequence of Theorem~\ref{th:C-K+G} that 
\begin{equation} \label{eqn:convsizes}
\frac{A_{\alpha}}{\mathbb E[D_1] n^{\alpha/(\alpha+1)}}(|C_1^n|, |C_2^n|, \ldots) \convdist (\mu_{C_1}(C_1), \mu_{C_2}(C_2), \ldots)
\end{equation}
in the product topology (in fact, this convergence can be shown to occur in $\ell^2$; see Proposition 5.6 of \cite{CKG17+}), jointly with the convergence in the sense of the product topology
\begin{equation} \label{eqn:convsurplus}
(s(\mathrm{C}_1^n), s(\mathrm{C}_2^n), \ldots) \convdist (s(\mathrm{C}_1), s(\mathrm{C}_2), \ldots)
\end{equation}
for the sequences of surplus edges. The joint law of $(\mu_{C_1}(C_1), \mu_{C_2}(C_2), \ldots)$ and  $(s(\mathrm{C}_1), s(\mathrm{C}_2), \ldots)$ is explicit in terms of the underlying $\alpha$-stable L\'evy process; see Section~\ref{sec:SLconfiguration}.  Moreover, Theorem 1.2 of \cite{CKG17+} shows that the limiting components $(\mathrm{C}_1,\mathrm{C}_2,\ldots)$ are conditionally independent given $(\mu_{C_1}(C_1), \mu_{C_2}(C_2), \ldots)$ and $(s(\mathrm{C_1}), s(\mathrm{C}_2), \ldots)$, with distributions coming from a collection of fundamental building-blocks: there exist random measured metric spaces $(\mathcal{G}^s,d^s, \mu^s)$, $s \geq 0$, where $\mu^s$ is a probability measure, such that, for all $i$, given $\mu_{C_i}(C_i)$ and $s(\mathrm{C_i})$, we have
\[
\big(C_i, d_{C_i}, \mu_{C_i} \big)\equidist \big(\mathcal{G}^{s(\mathrm{C}_i)}, \mu_{C_i}(C_i)^{1-1/\alpha} \cdot d^{s(\mathrm{C}_i)}, \mu_{C_i}(C_i) \cdot \mu^{s(\mathrm{C}_i)}\big).
\]
For $s=0$, $(\mathcal{G}^s,d^s, \mu^s)$ is simply the standard rooted $\alpha$-stable tree, the definition of which is recalled in Section~\ref{sec:stable}.  Informally, for $s \ge 1$, $(\mathcal{G}^s,d^s, \mu^s)$, is constructed by randomly choosing $s$ leaves in an $s$-biased version of this $\alpha$-stable tree, and then gluing them to randomly-chosen branch-points along their paths to the root, with probabilities proportional to the ``local time to the right" of the branch-points. (We will define these quantities in the sequel.) We will often think of the resulting $\mathbb R$-graph $\mathcal G^s$ as being rooted; in this case, the root is simply inherited from that of the $s$-biased $\alpha$-stable tree. The measure $\mu^s$ on $\mathcal{G}^s$ is then the probability measure inherited from the $s$-biased $\alpha$-stable tree.  We will often abuse notation and simply write $\mathcal{G}^s$ in place of $(\mathcal{G}^s, d^s, \mu^s)$.  For $a > 0$, we will also write $a \cdot \mathcal{G}^s$ to denote the same measured metric space with all distances scaled by $a$, i.e.\ $(\mathcal{G}^s, a  d^s, \mu^s)$.

In order to understand the geometric properties of the $\alpha$-stable graph, it therefore suffices to consider the measured metric spaces 
$$\mathcal{G}^s, s \ge 0.$$
We will call $\mathcal G^s$ the \emph{connected $\alpha$-stable graph with surplus $s$.}
Let us note immediately that $\mathcal{G}^s$ naturally inherits the Hausdorff dimension of the $\alpha$-stable tree and that, therefore, 
$$
\mathrm{dim}_{\text{H}}(\mathcal{G}^s)=\frac{\alpha}{\alpha-1} \quad \text{a.s.}
$$

Like a connected combinatorial graph, the $\R$-graph $\mathcal{G}^s$ may be viewed as a cycle structure to which pendant subtrees are attached. Let $\mathcal K^s$ be the image after the gluing procedure of the subtree spanned by the $s$ selected leaves and the root of the $s$-biased version of the $\alpha$-stable tree.  (When $s=0$, we use the convention that $\mathcal K^s$ is the empty set.) The space $\mathcal{K}^s$ encodes the rooted cycle structure of $\mathcal{G}^s$.  We refer to it as the \emph{continuous kernel} because it is a continuous analogue of the usual graph-theoretic notion of a kernel (except that it is rooted at a vertex of degree 1).  We will think of it as a rooted multigraph which is endowed with real-valued edge-lengths, and write $\mathsf{K}^s$ for the rooted multigraph \emph{without} the edge-lengths, which we call the \emph{discrete kernel}.

In order to better understand the structure of the $\R$-graph $\mathcal{G}^s$, we will approximate it by a sequence $(\mathcal{G}^s_n)_{n \ge 0}$ of multigraphs with edge-lengths, starting from the continuous kernel, $\mathcal G_{0}^s=\mathcal K^s$. Consider an infinite sample of leaves from $\mathcal{G}^s$, labelled $1,2,\ldots$.  For each $n \in \mathbb N$, let $\mathcal G_{n}^s$ be the connected subgraph of $\mathcal{G}^s$ consisting of the union of the kernel $\mathcal K^s$ and the paths from the $n$ first leaves to the root.   These are the $\R$-graph analogues of Aldous' \emph{random finite-dimensional marginals} for a continuum random tree.  For brevity, we will call them the \emph{marginals} of $\mathcal{G}^s$. In Lemma~\ref{lem:density} below, we note that $\mathcal{G}^s$ can be recovered as the completion  of $\cup_{n \geq 0} \mathcal G_{n}^s$.  We will also make extensive use of the discrete counterparts of the $\mathcal{G}_n^s$. For $n\geq 0$, let $\mathsf{G}_{n}^s$ be the \emph{combinatorial shape} of $\mathcal{G}_{n}^s$ (i.e.\ ``forget the edge-lengths", so as to obtain a finite graph with surplus $s$ and no vertices of degree 2 -- see (\ref{def:shape}) for a formal definition in the framework of trees that adapts immediately to our graphs), so that $\mathsf{K}^s=\mathsf{G}_{0}^s$. Note that the root vertex has degree 1 in all of these graphs. When $s\geq 2$, we can \emph{erase the root} in the discrete kernel (formally, we remove the root and the adjacent edge, and if this creates a vertex of degree 2 we erase it) to obtain a multigraph that we denote by $\mathsf G_{-1}^{s}$.

\subsection{Main results}
\label{sec:mainresults}

Throughout this section, we fix the surplus $s \in \Z_+$.  

Our first main results characterise the joint distributions of the discrete marginals $(\mathsf{G}_n^s)_{n \ge 0}$.  This family of random multigraphs has particularly attractive properties: for fixed $n$, the graph $\mathsf{G}_n^s$ has the distribution of a certain conditioned configuration model with i.i.d.\ random degrees, with a particular canonical degree distribution.  Moreover, as a process, $(\mathsf{G}_n^s)_{n \ge 0}$ evolves in a Markovian manner according to a simple recursive construction which is a version of Marchal's algorithm~\cite{Marchal} for building the marginals of the stable tree, $(\mathsf{G}_n^0)_{n \ge 0}$.  Although $\mathcal{G}^s$ is constructed from a biased version of the $\alpha$-stable tree, we emphasise that it was not at all obvious to us \emph{a priori} that Marchal's algorithm would generalise in this way.

An advantage of this recursive construction is that it has many urn models embedded in it, which enable us to get at different aspects of $\mathcal{G}^s$ easily.  We provide two different constructions of $\mathcal{G}^s$, which rely on relatively simple random building blocks.  The distributions of these building blocks (Beta, generalised Mittag-Leffler, Dirichlet and Poisson-Dirichlet) are defined in Section~\ref{sec:urns}, where we also recall various of their standard properties and discuss their relationships to urns.  Our two constructions are as follows.
\begin{enumerate}
\item The first takes a collection of i.i.d.\ $\alpha$-stable trees which are randomly scaled and then glued onto $\mathsf{K}^s$ in such a way that each edge of $\mathsf{K}^s$ is replaced by a tree with two marked points, and such that every vertex of $\mathsf{K}^s$ acquires a (countable) collection of pendant subtrees.
\item The second starts by replacing the edges of the kernel by line-segments of lengths with a given joint distribution, and then proceeds by recursively gluing a countable sequence of segments of random lengths onto the structure.  We call this a \emph{line-breaking construction} and obtain the limit space in the end by completion.
\end{enumerate}

These constructions generalise, in a natural way, the distributional properties and line-breaking construction proved in \cite{ABBrGo} for the components of the \emph{Brownian graph}, a term we use to mean the common scaling limit of the critical Erd\H{o}s-R\'enyi random graph~\cite{ABBrGo12} and the critical random graph with i.i.d.\ degrees having a finite third moment~\cite{BS16+} as well as various other models (see Section~\ref{sec:brownian}).  We emphasise, however, that the proofs in the stable setting are much harder, essentially due to the added complication of dealing with L\'evy processes rather than just Brownian motion.
Our line-breaking construction is the graph counterpart of the line-breaking construction of the stable trees given in \cite{GHlinebreaking}.

\subsubsection{The discrete marginals of $\mathcal{G}^s$}
\label{sec:marginalsintro}

We can recover the measured metric space $\mathcal G^s$ from the discrete marginals $\mathsf{G}_{n}^s$ by equipping them with the graph distance and the uniform distribution on their leaves, as follows.
\begin{prop}
\label{prop:approx}
$$\frac{\mathsf G_n^s}{n^{1-1/\alpha}} \underset{n \rightarrow \infty}{\overset{\mathrm{a.s.}}\longrightarrow} \alpha \cdot \mathcal G^s$$
for the Gromov-Hausdorff-Prokhorov topology.
\end{prop}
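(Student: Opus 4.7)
The plan is to lift the a.s.\ GHP convergence of the discrete marginals of the $\alpha$-stable tree through the gluing construction that produces $\mathcal{G}^s$.

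Recall that $\mathcal{G}^s$ is built from an $s$-biased $\alpha$-stable tree $\mathcal{T}$ by identifying each of $s$ selected leaves with a branch-point on its path to the root; write $q\colon \mathcal{T} \to \mathcal{G}^s$ for this gluing. An i.i.d.\ sample from $\mu^s$ lifts to an i.i.d.\ sample $v_1, \dots, v_n$ from the mass measure on $\mathcal{T}$. Let $\mathcal{T}_n$ be the sub-$\R$-tree of $\mathcal{T}$ spanned by the root, the $s$ glued leaves and $v_1, \dots, v_n$, and let $\mathsf{T}_n$ denote its combinatorial shape (with graph distance and the uniform measure on its leaves). Then $\mathcal{G}_n^s = q(\mathcal{T}_n)$ and $\mathsf{G}_n^s$ is the image of $\mathsf{T}_n$ under the induced combinatorial gluing.

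The tree-level input is the analogous a.s.\ convergence
\[
\frac{\mathsf{T}_n}{n^{1-1/\alpha}} \xrightarrow[n\to\infty]{\mathrm{a.s.}} \alpha \cdot \mathcal{T}
\]
for the $\alpha$-stable tree, which combines Marchal's recursive construction of its discrete marginals~\cite{Marchal} with the a.s.\ GHP convergence to the stable tree established in~\cite{GHlinebreaking}. The $s$-biasing of $\mathcal{T}$ is an absolutely continuous change of measure, so this a.s.\ statement is preserved under biasing; the extra $s$ marked leaves contribute a piece of fixed finite size. The proof is then completed by the continuity of the gluing operation for GHP: identifying finitely many pairs of points is a continuous map on (rooted, finitely-marked) measured metric spaces, by a direct correspondence argument. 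Applied with $X_n = \mathsf{T}_n/n^{1-1/\alpha}$ and $X = \alpha \cdot \mathcal{T}$, and the $2s$ marked points being the $s$ glued leaves and their target branch-points, this yields the proposition.

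The main obstacle is verifying that these $2s$ marked points in $\mathsf{T}_n/n^{1-1/\alpha}$ converge, in the appropriate pointed-GHP sense, to those in $\alpha\cdot\mathcal{T}$. Since the attachment branch-points are a.s.\ of infinite degree when $\alpha<2$, one has to check that their combinatorial positions inside $\mathsf{T}_n$ stabilise. This is handled by choosing each target branch-point via three sampled leaves that separate its three adjacent subtrees, so that the branch-point appears as an honest vertex of $\mathsf{T}_n$ for all $n$ sufficiently large and its position is then consistent across $n$.
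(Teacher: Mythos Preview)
Your proposal is correct and follows essentially the same route as the paper: transfer the known a.s.\ GHP convergence of the stable-tree marginals to the $s$-biased tree via absolute continuity, observe that for all sufficiently large $n$ the target branch-points are honest internal vertices of the spanning tree, and then push the convergence through the gluing of finitely many pairs of points (the paper cites \cite[Lemma~4.2]{ABBrGoMi} for this last step, while you invoke a direct correspondence argument). The only structural difference is that the paper interposes the continuous marginal $\mathcal{G}_n^s$ and shows separately that $\mathrm{d}_{\mathrm{GHP}}(n^{1/\alpha-1}\mathsf{G}_n^s,\alpha\cdot\mathcal{G}_n^s)\to 0$ and $\mathcal{G}_n^s\to\mathcal{G}^s$, whereas you glue and pass to the limit in one step; also, the a.s.\ GHP convergence of the discrete marginals is due to Curien--Haas~\cite{CurienHaas} rather than~\cite{GHlinebreaking}.
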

This generalises a result which says that the $\alpha$-stable tree is the (almost sure) scaling limit of its discrete marginals, see \cite{Marchal, CurienHaas}. The proof is given in Section~\ref{sec:approx}.

For any multigraph $G=(V,E)$, recall that we let $\mathrm{sl}(G)$ denote its number of self-loops, and for an element $e \in \supp(E)$, we let $\mathrm{mult}(e)$ denote its multiplicity. Let $I(G) \subset V$ denote the set of internal vertices of $G$. We say that a permutation $\tau$ of the set $I(G)$ is a \emph{symmetry} of $G$ if, after having extended $\tau$ to the identity function on the leaves, $\tau$ preserves the adjacency relations in the graph and for all $u,v \in V$, the edges $\{u,v\}$ and $\{\tau(u),\tau(v)\}$ have the same multiplicity. We let $\mathrm{Sym}(G)$ denote the set of symmetries of $G$. For $n \ge 0$, let $\mathbb M_{s,n}$ be the set of connected multigraphs with $n+1$ labelled leaves,  surplus $s$ and no vertices of degree 2. (Observe that the internal vertices are not labelled.) When $s\geq 2$, let $\mathbb M_{s,-1}$ be the set of unlabelled connected multigraphs with surplus $s$ and minimum degree at least 3.
Finally, let us define a sequence of weights by
\begin{equation}
\label{Marchal's weights}
w_0:=1, \quad w_1:=0, \quad w_2:=\alpha-1,\quad w_k:=(k-1-\alpha)\dots(2-\alpha)(\alpha-1), \quad \text{for $k\geq 3$.}
\end{equation}

Viewing the root as a leaf with label 0, we note that  $\mathsf{G}_{n}^s$ is an element of $\mathbb{M}_{s,n}$.  We can now describe the distributions of the random multigraphs $\mathsf{G}_{n}^s$.

\begin{theorem}
\label{th:distr_marginals}
Let $n\geq0$. For every connected multigraph $G=(V,E) \in \mathbb M_{s,n}$, 
	\[\Pp{\mathsf{G}_{n}^s=G}\propto\frac{\prod_{v\in I(G)} w_{\deg(v)-1}}{\abs{\mathrm{Sym}(G)}2^{\mathrm{sl}(G)}\prod_{e\in \supp(E)}\mathrm{mult}(e)!}.\]
This, in particular, gives the distribution of the kernel $\mathsf{K}^s$ when $n=0$. 
When $s\geq 2$, this expression also gives the distribution of $\mathsf{G}_{-1}^s$ on $\mathbb M_{s,-1}$.
\end{theorem}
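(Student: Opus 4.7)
The plan is to prove the formula by induction on $n$, with the base case $n = 0$ (the distribution of the kernel $\mathsf{K}^s$) carrying the main difficulty, and the step $n \mapsto n+1$ reducing to a Markovian leaf-addition rule of Marchal type. By the construction of $\mathcal{G}^s$ recalled in Section~\ref{sec:motivation}, the kernel $\mathsf{K}^s$ is the image after gluing of the discrete skeleton of the $s$-biased $\alpha$-stable tree spanned by the root and the $s$ selected leaves. Write $T^s$ for this labelled discrete skeleton and, for each branch-point $b$ of $T^s$ lying on a spine from a selected leaf to the root, write $L_b$ for the ``local-time-to-the-right'' at $b$ that governs the gluing probabilities. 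A biased version of Marchal's algorithm, applied to the $s$-biased $\alpha$-stable tree, gives the joint law of $T^s$ and $(L_b)_b$: the marginal law of $T^s$ is proportional to $\prod_{v\in I(T^s)} w_{\deg(v)-1}$ and, along each spine, the $L_b$'s factor as Dirichlet variables scaled by an appropriate generalised Mittag-Leffler factor (see Section~\ref{sec:urns}).

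To pass from $T^s$ to $\mathsf{K}^s$, each glued leaf $\ell_j$ is attached to one of the branch-points on its path to the root with probability proportional to $L_b$. I would integrate out the $L_b$'s using the moment identities for Dirichlet and generalised Mittag-Leffler distributions, then sum over all labellings $(\ell_1,\ldots,\ell_s)$ and all branch-point choices producing the same unlabelled multigraph $G \in \mathbb M_{s,0}$. Averaging over the local times introduces an extra Marchal-weight increment at each receiving branch-point, so that the cumulative weight at a vertex $v$ of $G$ becomes exactly $w_{\deg_G(v)-1}$. The combinatorial sum over labellings and matchings then generates the denominator $|\mathrm{Sym}(G)|\cdot 2^{\mathrm{sl}(G)}\prod_{e\in\supp(E)}\mathrm{mult}(e)!$: the factor $2^{\mathrm{sl}(G)}\prod \mathrm{mult}(e)!$ counts the number of half-edge matchings that realise $G$ (which is precisely the configuration-model correction), while $|\mathrm{Sym}(G)|$ appears because $T^s$ carries labelled internal vertices whereas the multigraph $G$ does not.

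For the inductive step, I would show that $\mathsf{G}_{n+1}^s$ is obtained from $\mathsf{G}_n^s$ by the standard Marchal-type rule: an edge or an internal vertex $v$ of degree $k$ is chosen with probabilities proportional to the appropriate Marchal weights, and a new labelled leaf is attached either by subdividing the chosen edge (creating a new internal vertex of degree $3$ of weight $w_2=\alpha-1$) or by direct attachment to $v$ (changing the weight at $v$ from $w_{k-1}$ to $w_k$). This is justified by the fact that the additional leaves are sampled from the mass measure of $\mathcal{G}^s$ inherited from the $s$-biased stable tree, so they land in the pendant $\alpha$-stable subtrees and grow exactly as in Marchal's algorithm for the stable tree, leaving the cycle structure untouched. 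A direct computation shows that this transition preserves the weighted-multigraph formula. The distribution of $\mathsf{G}_{-1}^s$ for $s\geq 2$ is obtained from that of $\mathsf{K}^s$ by marginalising over the choice of root-edge, which lowers the Marchal weight at the attachment vertex and absorbs any resulting degree-$2$ simplification, producing exactly the required formula on $\mathbb M_{s,-1}$. The hardest step is the base case: specifically the moment computation showing that the expected product of gluing weights under the joint law of $(L_b)_b$ is exactly the Marchal-weight increment needed at each glued branch-point, which relies on the identities for generalised Mittag-Leffler and Dirichlet variables recalled in Section~\ref{sec:urns} together with careful bookkeeping of how each gluing event relabels the half-edges.
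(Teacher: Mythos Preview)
Your base-case strategy is broadly aligned with the paper's: compute the conditional gluing probabilities in terms of local-times-to-the-right along the spines, integrate these out using the Dirichlet/Mittag-Leffler/Poisson--Dirichlet moment identities of Proposition~\ref{prop:jointlaws} and Lemma~\ref{lem:pd}, and check that the product of weight increments at each receiving vertex telescopes to $w_{\deg_G(v)-1}$. Two points deserve more care. First, the marginal law of the spanning-tree shape $T^s$ in the $s$-biased stable tree is \emph{not} proportional to $\prod_{v}w_{\deg(v)-1}$; that is the unbiased law~\eqref{plane tree distribution}. The paper handles this via the change of measure~\eqref{eq:change of measure}, which unbiases the excursion at the cost of an extra factor $X(V_1)\cdots X(V_s)$ inside the expectation; this factor is precisely what cancels the denominators in the gluing probabilities and lets the local-time moments collapse to Marchal weights (see~\eqref{eqn:proba of gluing}). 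Second, the paper organises the combinatorics through \emph{ordered} multigraphs (cyclic orderings around each vertex) and a depth-first-tree bijection with ``gluing plans'' (Section~\ref{sec:bijectiontreegluing}); Lemma~\ref{lem:co} then supplies the factor $|\mathrm{Sym}(G)|^{-1}2^{-\mathrm{sl}(G)}\prod_{e}\mathrm{mult}(e)!^{-1}$ when passing back to unordered multigraphs.

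The inductive step is where your proposal departs from the paper and contains a genuine gap. The paper does not use induction on $n$: it proves the formula for every $n\ge0$ in one stroke, since adding $n$ extra uniform leaves only affects the base-tree factor via~\eqref{plane tree distribution} and leaves the gluing computation unchanged. Marchal's algorithm (Theorem~\ref{thm:MarchalAlg}) is then \emph{deduced} from the distributional formula in Section~\ref{sec:Marchal}, by a direct ratio of the known weights. Your justification for the Marchal transition (``uniform leaves land in pendant $\alpha$-stable subtrees and grow exactly as in Marchal'') presupposes that, conditionally on $\mathcal{G}_n^s$, the complement decomposes into independent rescaled stable trees with the right Dirichlet masses; but that is Theorem~\ref{thm:distrGs}, which the paper proves \emph{after} and \emph{from} Theorems~\ref{th:distr_marginals} and~\ref{thm:MarchalAlg}. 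The authors explicitly flag (Section~\ref{sec:marginalsintro}) that the Marchal property was not obvious a priori in the biased setting. So as written, your inductive step is circular. The clean fix is to drop the induction and run your base-case argument for arbitrary $n$. Your treatment of $n=-1$ for $s\ge2$ (marginalise the root-edge from $\mathsf{K}^s$) matches Section~\ref{sec:unrooted}.
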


This result is proved in Section~\ref{sec:marginals}. To illustrate it, in Figure \ref{fig:table} we give the distribution of the kernel explicitly in the case $s=2$ and $\alpha=\frac{5}{4}$. 

\begin{figure}
{\fontsize{9}{9}\selectfont 
\begin{center}
\renewcommand{\arraystretch}{2}
\begin{tabular}{|c||c|c|c|c|c|c|c|} 
        \hline
	Graph $G \in \mathbb M_{2,0}$ & {\parbox[c][3cm]{1.3cm}{\includegraphics[width=1.3cm,page=1]{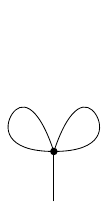}}} & \parbox[c]{1.3cm}{\includegraphics[width=1.3cm,page=2]{kernels.pdf}} & \parbox[c]{1.3cm}{\includegraphics[width=1.3cm,page=3]{kernels.pdf}} & \parbox[c]{1.3cm}{\includegraphics[width=1.3cm,page=4]{kernels.pdf}} & \parbox[c]{1.3cm}{\includegraphics[width=1.3cm,page=5]{kernels.pdf}} & \parbox[c]{1.3cm}{\includegraphics[width=1.3cm,page=6]{kernels.pdf}} & \parbox[c]{1.3cm}{\includegraphics[width=1.3cm,page=7]{kernels.pdf}}  \\ 
	\hline
	\hline
	$\displaystyle \mathrm{sl}(G)$ & 2 &1 &0 &0 &2 &1 &2 \\
	\hline 
	$\displaystyle\prod_{v\in I(G)}w_{\deg(v)-1} $ & $\frac{21}{64}$& $\frac{3}{64}$ & $\frac{3}{64}$ &$\frac{1}{64} $ & $\frac{3}{64}$ &$\frac{1}{64} $&$\frac{1}{64}$ \\
	 {{\tiny ($\alpha=5/4$)}} &  & & & & & & \\
	 \hline
	$\displaystyle\prod_{e\in E(G)}\mathrm{mult}(e)!$ &2  &2  & 6 & 2 & 1 & 2 & 1 \\
	\hline
	$\displaystyle \abs{\mathrm{Sym}(G)}$ & 1 &1  &1  & 2 &1  &1 &2  \\
	\hline
	$\displaystyle \Pp{\mathsf{K}^2=G}$ & $ \frac{1}{2} $&$ \frac{1}{7} $&$ \frac{2}{21}$&$ \frac{1}{21} $&$ \frac{1}{7}$ &$ \frac{1}{21}$ & $\frac{1}{42}$\\
	{{\tiny ($\alpha=5/4$)}} &  & & & & & & \\
	\hline
	$\displaystyle \Pp{\mathsf{K}^2_{\mathrm{Br}}=G}$ & $ 0 $&$ 0 $&$ 0$&$ \frac{2}{5} $&$ 0$ &$ \frac{2}{5}$ & $\frac{1}{5}$\\
	\hline
\end{tabular}
\end{center}
\caption{\fontsize{10}{10}\selectfont  The possible kernels for $s = 2$ with their probabilities for $\alpha = 5/4$ (given in the penultimate line).  For comparison, the last line gives the distribution of the kernel of the connected Brownian graph with surplus 2.}
\label{fig:table}}
\end{figure}

Comparing the form of the distribution of $\mathsf{G}_n^s$ with (\ref{eqn:config}) suggests a connection with a conditioned configuration model.  To make this precise, let $D^{(\alpha)}$ be a random variable on $\mathbb N$ with distribution
\begin{equation}
\label{def:Dalpha}
\mathbb P(D^{(\alpha)}=k) = \frac{2(1+\alpha)\alpha}{\alpha^2+\alpha+2} \cdot \frac{w_{k-1}}{k!}, \quad k \geq 2, \quad \text{and} \quad \mathbb P(D^{(\alpha)}=1)= \frac{2(1+\alpha)}{\alpha^2+\alpha+2}.
\end{equation}
Observe that $\mathbb{P}(D^{(\alpha)} = 2) = 0$.  We will verify in Section~\ref{sec:configembedded} that this indeed defines a probability measure which, moreover, satisfies the conditions (\ref{hyp:stable}).  Consider now the following particular instance of the configuration model.  We fix $n \ge 0$ and $m \ge n+1$ (include the case $n=-1$ if $s\geq 2$), take vertices labelled $0, 1,  \ldots, m-1$ to have i.i.d.\ degrees distributed according to $D^{(\alpha)}$ and write $\mathsf C_{n,m}^s$ for the resulting configuration multigraph conditioned to be in $\mathbb M_{n,s}$, after having forgotten the labels $n+1, n+2, \ldots, m-1$.
\begin{cor}
\label{cor:identification}
The random multigraph $\mathsf{G}_{n}^s$ conditioned to have $m$ vertices has the same law as $\mathsf C_{n,m}^s$.
\end{cor}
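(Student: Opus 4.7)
The plan is a direct comparison of probability mass functions. Both $\mathsf{G}_n^s$ conditioned on having $m$ vertices and $\mathsf C_{n,m}^s$ are random elements of the finite set $\{H \in \mathbb M_{s,n} : |V(H)| = m\}$, so it suffices to show that their weights are proportional. Theorem \ref{th:distr_marginals} gives the weights for $\mathsf G_n^s$; I compute those of $\mathsf C_{n,m}^s$ directly from (\ref{eqn:config}) and match.

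Fix $H \in \mathbb M_{s,n}$ with $|V(H)| = m$. The first step is a labelling count: the configuration model produces a labelled multigraph on $\{0,\ldots,m-1\}$, and projecting to $H$ requires the labels $\{0,\ldots,n\}$ to occupy the leaves in the prescribed way and a bijection from $\{n+1,\ldots,m-1\}$ to the internal vertices of $H$ taken modulo symmetries of $H$. By orbit-stabiliser, there are $(m-n-1)!/|\mathrm{Sym}(H)|$ such lifts, each one forcing the degree sequence (ones on the leaves, the degrees prescribed by $H$ on the internal vertices).

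Next I translate the $D^{(\alpha)}$-probabilities into Marchal weights via (\ref{def:Dalpha}): for $k\ge 2$,
\[
\mathbb P(D^{(\alpha)}=k)\cdot k! \;=\; \frac{2\alpha(1+\alpha)}{\alpha^2+\alpha+2}\cdot w_{k-1},
\]
while $\mathbb P(D^{(\alpha)}=1)\cdot 1! = \frac{2(1+\alpha)}{\alpha^2+\alpha+2}$ with $w_0 = 1$. Since $H$ has exactly $n+1$ leaves and $m-n-1$ internal vertices, both of which are fixed, the product
\[
\prod_{v\in V(H)} \mathbb P(D^{(\alpha)}=\deg(v))\cdot\deg(v)! \;=\; C(n,m,\alpha)\cdot \prod_{v\in I(H)} w_{\deg(v)-1}
\]
for some constant independent of $H$. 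Multiplying by the configuration-model weight (\ref{eqn:config}) of each lift and using that $\sum_v\deg(v) = 2|E(H)| = 2(m-1+s)$, so that $(\sum_i d_i - 1)!! = (2m+2s-3)!!$ is also independent of $H$ on our class, I arrive at
\[
\mathbb P(\mathsf C_{n,m}^s = H) \;\propto\; \frac{\prod_{v\in I(H)} w_{\deg(v)-1}}{|\mathrm{Sym}(H)|\cdot 2^{\mathrm{sl}(H)} \prod_{e\in\supp(E)}\mathrm{mult}(e)!},
\]
which coincides with the formula of Theorem \ref{th:distr_marginals}. After normalisation, equality of the distributions follows.

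The only genuinely non-routine ingredient is the orbit-stabiliser count of lifts in Step 1; the rest is substitution, made possible by the fact that (\ref{def:Dalpha}) is chosen precisely so that the factor $\mathbb P(D^{(\alpha)}=k)\cdot k!$ reproduces the Marchal weights $w_{k-1}$ (up to a global scaling that disappears in the normalisation). The constraint that there be no degree-$2$ vertices is automatically satisfied because $D^{(\alpha)}$ has no mass at $2$, so the conditioning imposes nothing beyond the support of the i.i.d.\ degrees and connectedness with the right surplus.
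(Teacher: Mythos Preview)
Your proof is correct and follows essentially the same approach as the paper's: compute the unnormalised weight of a fully-labelled configuration multigraph via (\ref{eqn:config}) and the i.i.d.\ $D^{(\alpha)}$ degree probabilities, note that the degree-sum and leaf-count are fixed on the conditioning event so that all factors not involving $H$ are constants, forget the internal labels via the orbit-stabiliser count $(m-n-1)!/|\mathrm{Sym}(H)|$, and compare with Theorem~\ref{th:distr_marginals}. Your presentation is slightly more streamlined in making explicit that $\mathbb P(D^{(\alpha)}=k)\cdot k!\propto w_{k-1}$ is the pivot of the computation, but the substance is identical.
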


This again generalises the analogous result for the $\alpha$-stable tree: the combinatorial shape of the subtree obtained by sampling $n \geq 0$ leaves and the root is distributed as a planted (i.e.\ with a root of degree 1) non-ordered version of a Galton-Watson tree conditioned to have $n$ leaves, whose offspring distribution $\eta_\alpha$ has probability generating function  $z+\alpha^{-1}(1-z)^{\alpha}$. There is, of course, a connection between $D^{(\alpha)}$ and $\eta_\alpha$: if we let $\hat{D}^{(\alpha)}$ denote the size-biased version
$$\mathbb P(\hat{D}^{(\alpha)}=k):=\frac{k \mathbb P(D^{(\alpha)}=k)}{\mathbb E\big[D^{(\alpha)}\big]}, \quad k \geq 1,$$
then $\hat{D}^{(\alpha)}-1$ is distributed as $\eta_\alpha$. See Section~\ref{sec:configembedded}.

In fact, we may think of the configuration multigraph with i.i.d.\ degrees distributed as $D^{(\alpha)}$ as, in some sense, the \emph{canonical} model in the universality class of the stable graph. For this model, the law of a component conditioned to have $n+1$ leaves and surplus $s$ is exactly the same as the corresponding discrete marginal for its scaling limit, and there exists a coupling for different $n$ which is such that we get almost sure (rather than just distributional) convergence, on rescaling, to the connected $\alpha$-stable graph with surplus $s$.

We are also able to understand the joint distribution of the graphs $\mathsf{G}_{n}^s,n\geq 0$ (again, include the case $n=-1$ when $s\geq 2$): they evolve according to a multigraph version of Marchal's algorithm~\cite{Marchal} for the discrete marginals of a $\alpha$-stable tree.  Let us define a step in the algorithm. Take a multigraph $G=(V,E)\in \M_{s,n}$. Declare every edge to have weight $\alpha-1$, every internal vertex $u\in I(G)$ to have weight $\deg_G(u)-1-\alpha$ and every leaf to have weight 0. Then the total weight of $G$ is
\begin{equation}
\label{identity:degrees}
\sum_{u \in I(G)}(\deg_G(u)-1-\alpha)+ (\alpha-1) \cdot |E|=\alpha(s+n)+s-1, 
\end{equation}
which depends only on the surplus and the number of leaves of the graph. 
We use the term \emph{edge-leaf} to mean an edge with a leaf at one of its end-points.  
Choose an edge/vertex with probability proportional to its weight. Then
\begin{itemize}
	\item if it is a vertex, attach a new edge-leaf where the leaf has label $n+1$ to this vertex,
	\item if it is an edge, attach a new edge-leaf where the leaf has label $n+1$ to a newly created vertex which splits the edge into two.
\end{itemize}
We say that a sequence of graphs evolves according to Marchal's algorithm if it is Markovian and the transitions are given by one step of Marchal's algorithm.

\begin{theorem} \label{thm:MarchalAlg}
For $s \ge 0$, the sequence $(\mathsf{G}_n^s)_{n \ge 0}$ evolves according to Marchal's algorithm. For $s \geq 2$, more generally, the sequence $(\mathsf{G}_n^s)_{n \ge -1}$ evolves according to Marchal's algorithm.
\end{theorem}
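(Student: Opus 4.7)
Proof plan. The strategy is to deduce the Markov property from a consistency observation about the discrete marginals, and then to identify the transitions via a direct computation from the explicit law in Theorem~\ref{th:distr_marginals}. The key consistency fact, immediate from the construction, is that for each $-1 \leq k < n$ the multigraph $\mathsf{G}_k^s$ is a deterministic function of $\mathsf{G}_n^s$: it is obtained by iteratively removing the leaves labelled $k+1, \ldots, n$ (and, when $k = -1$, also the root) and contracting any resulting degree-$2$ vertex. It follows that for any compatible past $(G_{-1}, G_0, \ldots, G_n)$ and any admissible $G' \in \M_{s,n+1}$,
\[
\mathbb{P}\bigl(\mathsf{G}_{n+1}^s = G' \,\big|\, \mathsf{G}_n^s = G_n,\, \mathsf{G}_{n-1}^s = G_{n-1},\, \ldots\bigr) \;=\; \frac{\mathbb{P}(\mathsf{G}_{n+1}^s = G')}{\mathbb{P}(\mathsf{G}_n^s = G_n)}\,\mathbf{1}\{G' \text{ reduces to } G_n\},
\]
which depends on the past only through $G_n$. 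This immediately yields the Markov property for both $(\mathsf{G}_n^s)_{n \geq 0}$ and (when $s \geq 2$) $(\mathsf{G}_n^s)_{n \geq -1}$.

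To identify the transition kernel, I fix $G = G_n$ with $\mathbb{P}(\mathsf{G}_n^s = G) > 0$ and let $G'$ be a child obtained by adding a new edge-leaf labelled $n+1$. There are two cases: (a) attachment to an existing internal vertex $v$ of degree $k \geq 3$; (b) attachment at a new degree-$3$ vertex splitting an existing edge $e$ of multiplicity $m$. Using the formula in Theorem~\ref{th:distr_marginals}, in case~(a) the only modification to the numerator is the replacement of $w_{k-1}$ by $w_k$, producing the ratio $w_k/w_{k-1} = k - 1 - \alpha = \deg_G(v) - 1 - \alpha$, with the self-loop and edge-multiplicity factors unchanged. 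In case~(b) the new vertex contributes $w_2 = \alpha - 1$ and the change in $\prod_e \mathrm{mult}(e)!$ contributes a factor $m$; in the self-loop subcase (splitting a self-loop creates a doubled edge while decreasing the self-loop count by one) the extra factors of $2$ from $\prod \mathrm{mult}(e)!$ and from $2^{\mathrm{sl}}$ cancel, so the total multiplier is again $m(\alpha-1)$. The symmetry factor $|\mathrm{Sym}(\cdot)|$ contributes the size of the $\mathrm{Sym}(G)$-orbit of the attachment site, which properly tracks the number of isomorphic ways of producing the labelled graph~$G'$.

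To match these ratios with Marchal's step probabilities, namely $(\deg_G(v) - 1 - \alpha)/W$ in case~(a) and $m(\alpha-1)/W$ in case~(b) with $W = \alpha(s+n) + s - 1$, I verify that the normalisation ratio $Z_n/Z_{n+1}$ equals $1/W$: summing the case~(a) and case~(b) contributions over all children $G'$ of a fixed $G$ produces, thanks to identity~(\ref{identity:degrees}), the factor $W \cdot \mathbb{P}(\mathsf{G}_n^s = G)$, so $Z_{n+1} = W \, Z_n$. The same argument goes through verbatim for the transition $\mathsf{G}_{-1}^s \to \mathsf{G}_0^s$ when $s \geq 2$, with total weight $(\alpha+1)(s-1)$.

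The main obstacle will be the combinatorial bookkeeping in case~(b): the interplay among $|\mathrm{Sym}|$, $2^{\mathrm{sl}}$ and $\prod_e \mathrm{mult}(e)!$ is delicate, because attaching the labelled leaf $n+1$ breaks exactly those symmetries of $G$ that act non-trivially at the attachment site, and the resulting orbit/stabiliser corrections must cancel precisely with the multiplicity and self-loop changes so that the one-step transition becomes a clean local Marchal weight depending only on the local attachment data, not on the global structure of~$G$.
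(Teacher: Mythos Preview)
Your argument is correct and complete in outline; the approach, however, differs from the paper's. The paper avoids the symmetry bookkeeping you flag as ``the main obstacle'' by passing to auxiliary multigraphs that have no nontrivial automorphisms: for $n\ge 0$ it works with versions $\mathsf G^{s,\mathrm{ord}}_{n}\in\M^{\mathrm{ord}}_{s,n}$ carrying cyclic orderings of half-edges around each vertex, where the law is simply $\propto\prod_{v}w_{\deg(v)-1}/(\deg(v)-1)!$ with no $|\mathrm{Sym}|$, $2^{\mathrm{sl}}$, or $\prod\mathrm{mult}(e)!$ factors, and the ratio $\mathbb P(H^{\mathrm{ord}})/\mathbb P(G^{\mathrm{ord}})$ is then a one-line computation in each of the two cases; the result is pushed down to $\M_{s,n}$ via the forgetful map. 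For $n=-1$ (where cyclic orderings no longer break all symmetries) the paper instead labels the internal vertices. Your route is more direct and treats $n\ge 0$ and $n=-1$ uniformly, at the cost of needing the orbit--stabiliser fact that $|\mathrm{Sym}(G')|=|\mathrm{Stab}_{\mathrm{Sym}(G)}(\text{site})|$, which is exactly what makes the $|\mathrm{Sym}|$, $2^{\mathrm{sl}}$ and $\prod\mathrm{mult}(e)!$ changes combine into the local Marchal weight times the orbit size of the attachment site; this does go through cleanly once stated, so your ``main obstacle'' is a genuine lemma but not a gap.
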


See Section~\ref{sec:Marchal} for a proof. We now turn to our constructions of the limit object $\mathcal{G}^s$.

\subsubsection{Construction 1: from randomly scaled stable trees glued to the kernel} 
\label{cons1:gluing} 
Given a connected multigraph $G \in \mathbb M_{s,0}$, with $k$ edges and $k-s$ internal vertices having degrees $d_1,\ldots,d_{k-s}$, consider independent random variables
\begin{equation}
\label{distr1}
(M_1,\ldots,M_{2k-s}) \sim \mathrm{Dir} \bigg(\underbrace{\frac{\alpha-1}{\alpha},\ldots,\frac{\alpha-1}{\alpha}}_{k},\frac{d_1-1-\alpha}{\alpha},\ldots, \frac{d_{k-s}-1-\alpha}{\alpha}\bigg)
\end{equation}
and, for $1 \leq i \leq k-s$,
\begin{equation}
\label{distr2}
(\Delta_{i,j}, j \geq 1) \sim \mathrm{PD}\left(\frac{1}{\alpha},\frac{d_i-1-\alpha}{\alpha}\right),
\end{equation}
where $\mathrm{Dir}(a_1,\ldots,a_n)$ denotes the Dirichlet distribution on the $(n-1)$-dimensional simplex, with parameters $a_1>0,a_2>0,\ldots,a_n>0$, and $\mathrm{PD}(a,b)$ denotes the Poisson-Dirichlet distribution on the set of positive decreasing sequences with sum 1, with parameters $a>0,b>0$.
 
Given all of these random variables, consider independent $\alpha$-stable trees $\mathcal T_{\ell}$, $\mathcal T_{i,j}$, where $\mathcal T_{\ell}$ has mass $M_{\ell}$ and $\mathcal T_{i,j}$ has mass $M_{i+k} \cdot \Delta_{i,j}$, with $1 \leq \ell \leq k, 1 \leq i \leq k-s, j \geq 1$. For each $\ell$ let $\rho_\ell$ denote the root of $\mathcal T_{\ell}$ and $L_{\ell}$ be a uniform leaf. Similarly, let $\rho_{i,j}$ denote the root of the tree $\mathcal T_{i,j}$ for each $i,j$.
Then denote by $ e_1,\ldots, e_{k}$ the edges of $G$ in arbitrary order, with, say, $e_i=\{x_i,y_i\}$, and by $v_1,\ldots, v_{k-s}$ the internal vertices of $G$, also in arbitrary order.
Finally, let $\mathcal G(G)$ be the $\R$-graph obtained by:
\begin{enumerate}
\item[$\bullet$] replacing the edge $\{x_{\ell},y_{\ell}\}$ with the tree $\mathcal T_{\ell}$, identifying $\rho_{\ell}$ with $x_{\ell}$ and $L_{\ell}$ with $y_{\ell}$, for each $1\leq \ell \leq k$, 
\item[$\bullet$] gluing to the vertex $v_i$ the collection of stable trees $\mathcal T_{i,j},j\geq 0$, by identifying all the roots $\rho_{i,j}$ to $v_i$ (this gluing a.s.\ gives a compact metric space, see Section~\ref{sec:distrGs}), for each $1\leq i \leq k-s$.
\end{enumerate}
On an event of probability one the graph $\mathcal G(G)$ is therefore compact, and is naturally endowed with the probability measure induced by the rescaled probability measures on the $\alpha$-stable trees $\mathcal T_{\ell}$, $\mathcal T_{i,j}$, $i,j,\ell \in \mathbb N$.  We view it as a random variable in $(\mathscr C,\mathrm d_{\mathrm{GHP}})$.

\medskip

\begin{theorem}
\label{thm:distrGs} 
Given the random kernel $\mathsf{K}^s$, let $\mathcal G(\mathsf{K}^s)$ be the graph constructed above by gluing $\alpha$-stable trees along the edges and vertices of $\mathsf{K}^s$. Then  
$$
\mathcal G^s \ \overset{\mathrm{d}}= \ \mathcal G(\mathsf{K}^s),
$$
as random variables in $(\mathscr C,\mathrm d_{\mathrm{GHP}})$.
\end{theorem}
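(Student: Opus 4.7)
The plan is to combine the Markovian description of $(\mathsf{G}_n^s)_{n\ge 0}$ given by Theorem~\ref{thm:MarchalAlg} with the almost-sure approximation of Proposition~\ref{prop:approx}. Conditional on the kernel $\mathsf{K}^s$, I want to track how the $n$ new leaves of $\mathsf{G}_n^s$ partition themselves among the $k$ edges and $k-s$ internal vertices of $\mathsf{K}^s$, and to identify the geometric limit of each resulting piece in terms of rescaled stable trees. The master identity (\ref{identity:degrees}) guarantees that every insertion of a leaf by Marchal's algorithm adds weight exactly $\alpha$ to the total weight of the graph, and local inspection of the two possible moves shows that the same increment is added to whichever connected piece absorbs the new leaf.

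First I would fix $\mathsf{K}^s$ and group Marchal weights into $2k-s$ \emph{blocks}: one ``edge block'' per edge $e_\ell$, with initial weight $\alpha-1$, and one ``vertex block'' per internal vertex $v_i$, with initial weight $d_i-1-\alpha$. Each block receives total weight exactly $\alpha$ every time one of its constituent edges or vertices is chosen, which identifies an embedded classical P\'olya urn on $2k-s$ colours, with replacement by $\alpha$ and initial weights equal to $\alpha$ times the parameters of (\ref{distr1}). The standard urn limit theorem then gives that the proportions of leaves in the blocks converge almost surely to a vector $(M_1,\ldots,M_{2k-s})$ with the Dirichlet law (\ref{distr1}).

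Next I would resolve each vertex block further. At $v_i$, a new leaf either attaches directly to $v_i$, seeding a fresh pendant subtree with initial weight $\alpha-1$ and raising the weight of $v_i$ by $1$, or it is added inside one of the existing pendant subtrees. A direct weight computation shows that the sequence of pendant-subtree sizes at $v_i$ evolves as a Chinese Restaurant Process with parameters $(\sigma,\theta)=(1/\alpha,(d_i-1-\alpha)/\alpha)$, so the ranked frequencies converge almost surely to $(\Delta_{i,j})_{j\ge 1}\sim \mathrm{PD}(1/\alpha,(d_i-1-\alpha)/\alpha)$, matching (\ref{distr2}). Moreover, conditionally on these allocations, each edge block evolves by Marchal's algorithm started from a single edge with two marked external points, and each pendant subtree evolves, after its seeding, by Marchal's algorithm for the stable tree. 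Applying Marchal's theorem \cite{Marchal}, strengthened to almost-sure GHP convergence in \cite{CurienHaas}, together with the mass-to-scale relation $m\mapsto m^{1-1/\alpha}$, each piece converges almost surely after the appropriate rescaling to an independent $\alpha$-stable tree of the correct mass, with the correct collection of marked gluing points.

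The main obstacle is passing from this piece-by-piece convergence to GHP convergence of the full $\R$-graph, because each vertex $v_i$ carries infinitely many pendant subtrees whose masses $(\Delta_{i,j})_{j\ge 1}$ only sum to $1$. I would need a uniform-in-$n$ tightness estimate showing that the combined GHP-contribution of pendant subtrees with cumulative mass below a threshold tends to $0$ as the threshold vanishes; this is where careful moment estimates for stable-tree diameters, together with the stick-breaking representation of $\mathrm{PD}(1/\alpha,\cdot)$, come in. Once this tightness is secured, gluing the rescaled block limits along $\mathsf{K}^s$ exactly as in the construction of $\mathcal{G}(\mathsf{K}^s)$ identifies the almost-sure limit of $\mathsf{G}_n^s/n^{1-1/\alpha}$ with $\alpha\cdot\mathcal{G}(\mathsf{K}^s)$, and Proposition~\ref{prop:approx} then delivers $\mathcal{G}^s \equidist \mathcal{G}(\mathsf{K}^s)$.
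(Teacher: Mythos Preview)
Your proposal is correct and follows essentially the same approach as the paper's own proof: conditioning on $\mathsf{K}^s$, identifying the block-mass process as a P\'olya urn with limit (\ref{distr1}), resolving each vertex block via a $(1/\alpha,(d_i-1-\alpha)/\alpha)$ Chinese restaurant process to obtain (\ref{distr2}), invoking the almost-sure GHP convergence of Marchal's algorithm for each piece, and then handling the infinitely many pendant subtrees at each $v_i$ via height-moment estimates before gluing. The paper carries out exactly this programme, with the only notable refinement being that the convergence of $T_n(v_i)$ to the glued bouquet $\mathcal T_{(i)}$ is established only in probability (which suffices, since the final statement is distributional).
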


We prove Theorem~\ref{thm:distrGs} in Section~\ref{sec:distrGs} via the recursive construction of the discrete graphs  $\mathsf G_n^s,n \geq 0$. As a byproduct of the proof,  
we obtain the distribution of the continuous marginals  $\mathcal G_n^s$, which may be viewed as $\mathsf G_n^s$ with random edge-lengths. In particular, when $n=0$, we obtain the distribution of the continuous kernel $\mathcal K^s$.  

\begin{prop}
\label{cor:lengthskernel}
For $n \geq 0$, given $\mathsf G_n^s=(V,E)$, let $(L(e), e \in E)$ be the lengths of the corresponding edges in $\mathcal G_n^s$, in arbitrary order. Then,
$$
\left(\alpha \cdot L(e), e \in E \right)$$ 
is distributed as the product of three independent random variables:
\begin{equation}
\label{ref:BMLD}
\mathrm{Beta}\left(|E|,\frac{(n+s)\alpha+s-1}{\alpha-1}-|E|\right) \cdot \mathrm{ML}\left(1-\frac{1}{\alpha}, \frac{(n+s)\alpha+s-1}{\alpha} \right) \cdot \mathrm{Dir}(1,\ldots,1).
\end{equation}
\end{prop}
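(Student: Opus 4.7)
My plan is to derive Proposition~\ref{cor:lengthskernel} by induction on $n \geq 0$, in parallel with the recursive construction of $(\mathsf{G}_n^s)_{n\ge 0}$ via Marchal's algorithm (Theorem~\ref{thm:MarchalAlg}). Specifically, I would lift the Markovian discrete dynamics to the continuum, by specifying at each step $n \to n{+}1$: (i) which edge or vertex of $\mathcal{G}_n^s$ is selected, (ii) where on that edge the new branchpoint lies (when an edge is picked), and (iii) the length of the newly attached edge-leaf. The goal is to show that (i)--(iii) preserve the product form~(\ref{ref:BMLD}), so that the $\mathrm{Beta}$, $\mathrm{ML}$ and $\mathrm{Dir}(1,\ldots,1)$ parameters update by exactly the right amounts when one adds a new leaf.

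\textbf{Base case $n=0$.} Invoke Theorem~\ref{thm:distrGs} in its simplest guise: conditionally on $\mathsf{K}^s$ with $k = |E|$ edges and internal-vertex degrees $d_1,\ldots,d_{k-s}$, each edge $e_\ell$ is replaced by an $\alpha$-stable tree $\mathcal{T}_\ell$ of mass $M_\ell$, its root and a uniform leaf being glued to the endpoints of $e_\ell$. Stable-tree scaling gives $L(e_\ell) = M_\ell^{1-1/\alpha} H_\ell$, with $(M_1,\ldots,M_{2k-s})$ as in~(\ref{distr1}) and the $H_\ell$ i.i.d., distributed as the root-to-uniform-leaf distance in the standard $\alpha$-stable tree. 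Splitting the Dirichlet via the beta-gamma algebra as $(M_1,\ldots,M_k) = B \cdot (D_1,\ldots,D_k)$ with $B$ a Beta factor and $(D_1,\ldots,D_k) \sim \mathrm{Dir}((\alpha-1)/\alpha,\ldots,(\alpha-1)/\alpha)$, and using the known fact that $\alpha H_\ell$ is Mittag-Leffler distributed (classical for the stable tree, cf.\ Section~\ref{sec:SL}), the Mittag-Leffler convolution/aggregation identity should collapse the $k$ edge contributions into a single $\mathrm{ML}$ factor and turn the $D_\ell$-tuple into the $\mathrm{Dir}(1,\ldots,1)$ factor of~(\ref{ref:BMLD}), with the correct $n=0$ parameters.

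\textbf{Induction step.} Assume the proposition at step $n$ and condition on $\mathsf{G}_n^s$ together with the Marchal choice at step $n{+}1$. When an edge $e$ is picked, the new branchpoint should be uniform on $e$: this follows from the exchangeability of leaves within each stable tree of Construction~1, and is exactly what promotes the $\mathrm{Dir}(1,\ldots,1)$ with $|E|$ coordinates into the $\mathrm{Dir}(1,\ldots,1)$ with $|E|+1$ coordinates (adding a uniform split plus a fresh length coordinate). The length of the new edge-leaf is itself a product of a Beta and a Mittag-Leffler factor; combining it with the inductive decomposition via the beta-beta ``stick'' identity and the analogous identity for Mittag-Leffler variables (from Section~\ref{sec:urns}) should increment the first parameter of the $\mathrm{Beta}$ from $|E|$ to $|E|+1$ and shift the second parameter of the $\mathrm{ML}$ from $((n+s)\alpha+s-1)/\alpha$ to $((n+1+s)\alpha+s-1)/\alpha$, as required.

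\textbf{Main obstacle.} The hard part is the parameter bookkeeping at each Marchal step, since the two cases (``add to a vertex'' vs.\ ``split an edge'') perturb the Beta, $\mathrm{ML}$ and $\mathrm{Dir}$ parameters differently, and both recursions must close consistently to preserve~(\ref{ref:BMLD}). A second subtlety lies in the base case, where reducing the Dirichlet-indexed sum $\sum_\ell M_\ell^{1-1/\alpha} H_\ell$ to a single $\mathrm{Beta} \times \mathrm{ML} \times \mathrm{Dir}$ product is not immediate and requires the precise distributional identities among Beta, Dirichlet and Mittag-Leffler variables collected in Section~\ref{sec:urns}.
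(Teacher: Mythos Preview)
Your approach is genuinely different from the paper's, and considerably more involved. The paper does \emph{not} do induction on $n$ in the continuum. Instead, it fixes $n_0$ once and for all, conditions on $\mathsf G_{n_0}^s=(V,E)$, and then runs the \emph{discrete} Marchal algorithm forward from step $n_0$ to step $n\to\infty$. For each $e\in E$ let $L_n(e)$ be the number of edges of $\mathsf G_n^s$ lying along the path corresponding to $e$; then $L_n^{\mathrm{tot}}=\sum_e L_n(e)$ is a triangular urn scheme (Theorem~\ref{thm:urn1}) with parameters $a=|E|$, $b=\frac{(n_0+s)\alpha+s-1}{\alpha-1}-|E|$, $\gamma=1$, $\beta=\alpha/(\alpha-1)$, while the proportions $(L_n(e)/L_n^{\mathrm{tot}})_{e\in E}$ evolve as an independent P\'olya urn with unit initial weights. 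The urn limit theorems give directly that $n^{-(1-1/\alpha)}(L_n(e))_{e\in E}$ converges a.s.\ to a vector with law~(\ref{ref:BMLD}), and the identification with $(\alpha L(e))_{e\in E}$ follows from Proposition~\ref{prop:approx}. No induction, no case split on ``edge vs.\ vertex'', no need to know the continuous transition $\mathcal G_n^s\to\mathcal G_{n+1}^s$.

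Your induction step, by contrast, requires exactly that continuous transition: the conditional joint law of the new edge-leaf length and the split point given the full metric structure of $\mathcal G_n^s$. You propose to extract this from Construction~1 and ``exchangeability'', but that is essentially re-deriving the line-breaking construction (Theorem~\ref{th:linebreaking}), which in the paper is proved \emph{using} Proposition~\ref{cor:lengthskernel} --- so there is a real risk of circularity unless you carry out the derivation from Theorem~\ref{thm:distrGs} in full, and that is substantially more work than the two-urn argument above. (The paper even comments on this after the proof of Theorem~\ref{thm:distrGs}: one \emph{could} deduce the edge-length law for general $n$ from Construction~1, ``however, it is simpler to prove this directly using urn arguments''.) Your base case $n=0$ is fine and matches what the paper sketches; it is the induction step where you would be doing much more than necessary.
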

Here, $\mathrm{ML}(\beta,\theta)$ denotes the generalised Mittag-Leffler distribution with parameters $0 < \beta < 1$ and $\theta > -\beta$.  

\subsubsection{Construction 2: line-breaking} 
\label{cons2:LB}
Various prominent examples of random metric spaces may be obtained as the limit of a so-called line-breaking procedure that consists in gluing recursively segments of random lengths -- or more complex measured metric structures -- to obtain a growing structure. The most famous is the line-breaking construction of the Brownian continuum random tree discovered by Aldous in \cite{AldousCRT1}. We refer to \cite{ABBrGo, CH14, GHlinebreaking, RW16,  Sen17, Sen19+_scaling} for other models studied since then.

The $\R$-graph $\mathcal G^s$ may also be constructed in such a way, starting from its kernel. 
This construction makes use of an increasing $\mathbb R_+$-valued Markov chain $(R_n)_{n \geq 1}$ which is characterized by the following two properties for each $n \geq 1$: 
$$R_n \sim \mathrm{ML}\left(1-\frac{1}{\alpha},\frac{n\alpha+(s-1)}{\alpha}\right) \quad \quad \text{and} \quad \quad R_n=R_{n+1} \cdot B_n$$
where $B_n \sim \mathrm{Beta}\left(\frac{(n+1)\alpha+s-2}{\alpha-1},\frac{1}{\alpha-1}\right)$ is a random variable independent of $R_{n+1}$.
(An explicit construction of this Markov chain is given e.g.\ in \cite[Section~1.2]{GHlinebreaking}. Note that similar Markov chains arise in the scaling limits of several stochastic models, see \cite{Lancelot15, Sen19+_geometry}.)
 
For the moment, assume that $s\geq 1$. Suppose we are given $\mathsf{K}^s$ with, say, $k$ edges and internal vertices $v_1,\ldots, v_{k-s}$ having degrees $d_1,\ldots,d_{k-s}$ respectively (the order of labelling is unimportant). We first perform an initialisation step: independently of the Markov chain $(R_n)_{n \geq 1}$,
\begin{itemize}
\item sample 
\[
(\Theta_1,\ldots,\Theta_{2k-s}) \sim \mathrm{Dir}\Big(\underbrace{1,\ldots,1}_k,\frac{d_1-1-\alpha}{\alpha-1},\ldots, \frac{d_{k-s}-1-\alpha}{\alpha-1}\Big);
\]
\item assign the lengths $R_s\cdot \Theta_1,\ldots,R_s \cdot \Theta_k$ to the $k$ edges of  $\mathsf{K}^s$ (the order is again unimportant); viewing the edges as closed line-segments, this gives a metric space that we denote $\mathcal H^s_0$, with $k-s$ branch-points (i.e.\ vertices of degree at least 3) labelled $v_1,\ldots,v_{k-s}$;
\item let $\eta_0:=\lambda_{\mathcal H^s_0}+\sum_{i=1}^{k-s} (R_s \cdot \Theta_{k+i})\delta_{v_i}$, where $\lambda_{\mathcal H^s_0}$ denotes the Lebesgue measure on $\mathcal H^s_0$. 
\end{itemize} 

We now build a growing sequence of measured metric spaces $(\mathcal H^s_{n},\eta_n)_{n \geq 0}$, starting from $(\mathcal H^s_{0},\eta_0)$.  Recursively,
\begin{itemize}
\item select a point $v$ in $\mathcal H^s_{n}$ with probability proportional to $\eta_n$;
\item attach to $v$ a new closed line-segment $\sigma$ of length $(R_{n+s+1}-R_{n+s}) \cdot \beta_n$,  where $\beta_n$ has a  \linebreak $\mathrm{Beta}(1,(2-\alpha)/(\alpha-1))$-distribution and is independent of everything constructed until now; this gives $\mathcal H_{n+1}^s$;
\item let $\eta_{n+1}:=\eta_n+ (R_{n+s+1}-R_{n+s}) \cdot (1-\beta_n) \delta_{v}+\lambda_{\sigma}$, where  $\lambda_{\sigma}$ denotes the Lebesgue measure on $\sigma$.
\end{itemize}
When $s=0$ the construction works similarly except that the initialization starts at $n=1$ with $\mathcal H^0_1$ taken to be a closed segment of length $R_1$, equipped with the Lebesgue measure denoted by $\eta_1$. We have the following result, which is proved in Section~\ref{sec:LB}.

\bigskip

\begin{theorem}
\label{th:linebreaking}
The sequence $(\mathcal H^s_n, n\geq 0)$ is distributed as $(\mathcal G^s_n, n\geq 0)$. In consequence, the graph $\mathcal H^s_n$, endowed with the uniform probability on its set of leaves, converges almost surely for the Gromov-Hausdorff-Prokhorov topology to a random compact measured metric space distributed as $\mathcal G^s$. In particular, $\overline{\cup_{n\geq 0} \mathcal H^s_{n}}$ is a version of $\mathcal G^s$.
\end{theorem}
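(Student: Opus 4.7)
The plan is to prove the process-level distributional identity $(\mathcal H^s_n)_{n \geq 0} \equidist (\mathcal G^s_n)_{n \geq 0}$ by induction on $n$, from which the convergence claim then follows via the completion characterisation of $\mathcal G^s$ (the discussion preceding Lemma \ref{lem:density}). The base case amounts to matching the distribution of $\mathcal H^s_0$ with the joint law of shape $\mathsf K^s$ and kernel edge-lengths of $\mathcal G^s_0$ given by Proposition \ref{cor:lengthskernel} at $n=0$. This reduces to the Dirichlet aggregation property: writing $\Sigma = \Theta_1 + \cdots + \Theta_k$, one has $\Sigma \sim \mathrm{Beta}\bigl(k, \sum_i (d_i-1-\alpha)/(\alpha-1)\bigr)$ independently of $(\Theta_i/\Sigma)_{i \leq k} \sim \mathrm{Dir}(1,\ldots,1)$; using the identity $\sum_i d_i = 2k-1$ (the root is the unique leaf and has degree $1$), the Beta parameters collapse to those of \eqref{ref:BMLD}, and the $\mathrm{ML}$ parameter of $R_s$ agrees by direct comparison.

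For the inductive step, I would verify that a single line-breaking transition decomposes into (a) a step of Marchal's algorithm on the combinatorial shape, matching Theorem \ref{thm:MarchalAlg}, and (b) the update of edge-lengths from level $n$ to level $n+1$ prescribed by Proposition \ref{cor:lengthskernel}. The key observation behind (a) is that under the inductive hypothesis the joint vector consisting of edge-lengths and branch-point masses is of Dirichlet type, with edge-parameters equal to $1$ and vertex-parameters $(d_i - 1 - \alpha)/(\alpha-1)$, scaled by the common total $R_{n+s}$. The marginal probability of selecting any given edge or branch-point is therefore proportional to its Dirichlet parameter, which after multiplication by $\alpha-1$ reproduces Marchal's weights in \eqref{identity:degrees}. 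Conditionally on an edge being selected, the split point is uniform, and this is consistent with the standard Dirichlet splitting identity $\Theta \mapsto (\Theta U, \Theta(1-U))$ for an independent $U \sim \mathrm{Beta}(1,1)$.

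For (b), the Markov identity $R_{n+s} = R_{n+s+1} \cdot \mathrm{Beta}\bigl(((n+s+1)\alpha+s-2)/(\alpha-1), 1/(\alpha-1)\bigr)$ together with the independent $B_n \sim \mathrm{Beta}(1, (2-\alpha)/(\alpha-1))$ and standard Beta-Dirichlet composition identities force the required parameter shifts: the first Beta parameter in \eqref{ref:BMLD} increases from $|E|$ to $|E|+1$, the $\mathrm{ML}$ index moves from $(n\alpha+s-1)/\alpha$ to $((n+1)\alpha+s-1)/\alpha$, and the Dirichlet gains a new edge-coordinate of value $1$ and, in the edge-split case, a new vertex-coordinate of value $(2-\alpha)/(\alpha-1)$ corresponding to the newly created degree-$3$ branch-point. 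In the alternative attachment-at-an-existing-branch-point case, the vertex-parameter $(d_i - 1 - \alpha)/(\alpha-1)$ is replaced by $(d_i - \alpha)/(\alpha-1)$, reflecting the degree increment; again this is exactly what the $B_n$ and $(R_n)$ recursions produce, with the residual mass $(R_{n+s+1}-R_{n+s})(1-B_n)$ absorbed into the existing Dirichlet coordinate of the chosen vertex.

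The main obstacle is this last bookkeeping: two subcases of the inductive step (split of an interior edge versus attachment at an existing branch-point) must each be checked against the level-$(n+1)$ form of Proposition \ref{cor:lengthskernel}, and matching the parameters requires several interlocking Beta-Dirichlet identities together with the specific numerical choices in $(R_n)$ and $B_n$. Once the distributional identity $(\mathcal H^s_n) \equidist (\mathcal G^s_n)$ is obtained as a process, the final claim follows: Skorokhod representation yields an almost sure coupling, $(\mathcal H^s_n)$ is increasing in the Gromov-Hausdorff-Prokhorov sense (each step merely attaches a segment), and its completion $\overline{\bigcup_n \mathcal H^s_n}$ is distributed as the completion of $\bigcup_n \mathcal G^s_n$, which is $\mathcal G^s$ by Lemma \ref{lem:density}. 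The convergence when $\mathcal H^s_n$ is endowed with the uniform probability on its leaves then follows from the standard denseness of leaves in stable trees, inherited through the gluing description of $\mathcal G^s$ supplied by Theorem \ref{thm:distrGs}.
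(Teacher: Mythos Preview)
Your approach is essentially the same as the paper's: both hinge on showing that the $(\mathcal H^s_n)$ construction produces (i) combinatorial shapes evolving by Marchal's algorithm and (ii) edge-lengths matching Proposition~\ref{cor:lengthskernel}, with the key ingredient being the joint Dirichlet structure of edge-lengths and vertex-weights at each level. The paper isolates this last fact as a preliminary lemma (Lemma~\ref{lem:lengthslinebreaking}), proved by induction, and then uses it; you fold it directly into the main induction, which is fine, but note that your stated inductive hypothesis $\mathcal H^s_n \equidist \mathcal G^s_n$ does not by itself give the vertex-weight distribution (the measure $\eta_n$ is extra data not present in $\mathcal G^s_n$), so the hypothesis you actually need to carry is the full statement of Lemma~\ref{lem:lengthslinebreaking}.

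Two small points. First, your induction establishes the marginal identity $\mathcal H^s_n \equidist \mathcal G^s_n$ for each $n$, but to pass to the process-level identity you need the observation (made explicitly in the paper) that both sequences are \emph{sampling consistent}: the backward transitions are deterministic (delete leaf $n{+}1$ and its adjacent segment), so equality of marginals forces equality of the process laws. You do not mention this, and matching forward transitions is not available since the forward transition of $(\mathcal G^s_n)$ is not given directly. Second, the Skorokhod step is unnecessary: once the process laws agree, the almost sure convergence $\mathcal G^s_n \to \mathcal G^s$ from Lemma~\ref{lem:density} is a distributional statement and transfers immediately to $(\mathcal H^s_n)$.
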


\bigskip

\begin{rem}
We adopt a ``discrete'' approach to proving Theorems~\ref{thm:distrGs} and \ref{th:linebreaking}; in other words, we make use of Marchal's algorithm and the fact that it gives us a sequence of approximations which, on rescaling, converge almost surely to the connected $\alpha$-stable graph with surplus $s$.  An alternative approach should be possible, whereby one would work directly in the continuum, but it is far from clear to us that it would be any simpler to implement.
\end{rem}

\subsection{The finite third moment case, and other related work}
\label{sec:brownian}

The case where 
$$
\E{D_1^2}=2 \E{D_1} \quad \text{and} \quad \E{D_1^3}<\infty
$$
has already been well-studied. In particular, when $\mathbb{P}(D_1 = 2) < 1$, 
if we let $\beta=\E{D_1(D_1-1)(D_1-2)}$ then Theorem~\ref{th:C-K+G} holds with 
$\alpha=2$ if we rescale the counting measure on each component by \linebreak $\beta^{-1}  \E{D_1}n^{-2/3} $ and the graph distances by $\beta \E{D_1}^{-1} n^{-1/3}$. The limiting graphs are constructed similarly to ours but using a standard Brownian motion instead of a spectrally positive $\alpha$-stable L\'evy process (with the small variation that $\beta$ appears in the change of measure). See \cite[Theorem~2.4 and Construction 3.5]{BS16+} and also \cite{CKG17+} for more details.
This \emph{Brownian graph} first appeared as the scaling limit of the critical Erd\H{o}s-R\'enyi random graph \cite{ABBrGo12} and is now known to be the universal scaling limit of various other critical random graph models. Precise analogues of our main results were already known in this Brownian case (except for Theorem~\ref{thm:MarchalAlg}).

It follows from the properties of Brownian motion that the branch-points in $\mathcal G_{\mathrm{Br}}^s$, the connected Brownian graph with surplus $s$, are then all of degree 3. Its discrete kernel $\mathsf K_{\mathrm{Br}}^s$ is therefore a $3$-regular planted multigraph, whose distribution is given below.

\begin{theorem}[{\cite[Figure (2)]{ABBrGo}} and {\cite[Theorem~7]{JKLP93}}]
\label{th:distr_brow_ker}
For a connected $3$-regular planted multigraph $G$ with surplus $s$,
	\[\Pp{\mathsf{K}_{\mathrm{Br}}^s=G}\propto\frac{1}{\abs{\mathrm{Sym}(G)}2^{\mathrm{sl}(G)}\prod_{e\in \supp(E(G))}\mathrm{mult}(e)!}.\]
\end{theorem}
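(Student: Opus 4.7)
The plan is to obtain this Brownian distribution as the formal $\alpha \to 2$ limit of the stable result, Theorem~\ref{th:distr_marginals} with $n=0$. At $n=0$, that theorem gives
\[
\Pp{\mathsf{K}^s = G} \propto \frac{\prod_{v \in I(G)} w_{\deg(v)-1}}{\abs{\mathrm{Sym}(G)}\, 2^{\mathrm{sl}(G)} \prod_{e \in \supp(E)} \mathrm{mult}(e)!}
\]
for every $G \in \mathbb{M}_{s,0}$. From (\ref{Marchal's weights}), $w_2 = \alpha-1 \to 1$, while every $w_k$ with $k \geq 3$ contains the factor $(2-\alpha)$ and so tends to $0$. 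In the limit, only graphs whose internal vertices all have degree $3$ carry positive mass, and for such graphs $\prod_{v \in I(G)} w_{\deg(v)-1} = 1$, leaving exactly the proportionality claimed.

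To turn this observation into a proof one must justify that $\mathsf{K}_{\mathrm{Br}}^s$ really is the $\alpha \to 2$ limit of $\mathsf{K}^s$. Rather than attempting a continuity statement in $\alpha$ at the level of the L\'evy construction of Section~\ref{sec:SL}, the cleaner route is to reproduce the proof of Theorem~\ref{th:distr_marginals} in the Brownian setting. One builds $\mathcal{G}^s_{\mathrm{Br}}$ from a standard Brownian CRT by pinching $s$ leaves to their ancestral paths according to the natural local-time weighting, introduces discrete approximants via Aldous-style leaf-sampling, and invokes the trivalence of the CRT's branch-points to conclude that every internal vertex of $\mathsf{K}_{\mathrm{Br}}^s$ has degree $3$ almost surely. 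The numerator $\prod_{v\in I(G)} w_{\deg(v)-1}$ then collapses to the constant $1$ on the support, while $\abs{\mathrm{Sym}(G)}$, $2^{\mathrm{sl}(G)}$, and $\prod \mathrm{mult}(e)!$ arise for exactly the same structural reasons as in the stable case: internal vertices are unlabelled, each self-loop carries a $\mathbb{Z}/2$-symmetry, and edges sharing endpoints are indistinguishable.

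The main obstacle is the Brownian analogue of Marchal's algorithm: since all internal-vertex weights vanish in the limit, the only admissible move is subdivision of an existing edge, and one must check that this degenerate dynamics reproduces the stated kernel distribution rather than some biased version of it. This requires the Brownian counterpart of Proposition~\ref{cor:lengthskernel}, in which the mass partition recording where pinching points fall is governed by Dirichlet and Poisson--Dirichlet distributions with parameter $1/2$, so that the normalising constants in the conditional laws evaluate to the expected factorials and symmetry corrections. Once this calibration is in place, the weighted enumeration of $3$-regular planted multigraphs coincides with the combinatorial count carried out in \cite{JKLP93} and with the direct Brownian derivation of \cite{ABBrGo}.
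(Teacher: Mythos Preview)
The paper does not give its own proof of this statement: Theorem~\ref{th:distr_brow_ker} is quoted from \cite{ABBrGo} and \cite{JKLP93}, with only the parenthetical remark that ``our proofs in Section~\ref{sec:marginals} can be adapted to recover this case.'' So there is no proof in the paper to compare against; what can be compared is your outline versus the adaptation the paper alludes to.

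Your first two paragraphs are on the right track and align with the paper's remark. The formal $\alpha\to 2$ limit of the weights is the correct heuristic, and redoing the Section~\ref{sec:marginals} argument with the Brownian CRT in place of the stable tree is exactly the adaptation the paper has in mind: branchpoints are a.s.\ ternary, so the base tree $T$ is binary, every gluing plan has each $k_{v,\ell}\in\{0,1\}$ and each $k_{e,i}=1$, and the product of $w_{\deg_G(v)-1}$ collapses to $1$.

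Your third paragraph, however, introduces a confusion that is worth flagging. The kernel distribution in the paper is \emph{not} derived from Marchal's algorithm; rather, Theorem~\ref{th:distr_marginals} is proved first (Section~\ref{sec:calculations}) by the gluing-plan calculation, and Marchal's algorithm (Theorem~\ref{thm:MarchalAlg}) is deduced afterwards from the already-known marginal laws. So there is no need to ``check that this degenerate dynamics reproduces the stated kernel distribution,'' nor is Proposition~\ref{cor:lengthskernel} (which concerns continuous edge-lengths) relevant to the discrete statement. The actual work in an adaptation is to redo the moment computation in Section~\ref{sec:calculations} with the Brownian local-time and Dirichlet/Poisson--Dirichlet parameters (all specialising to $\alpha=2$), which is routine once the machinery is in place. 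Your proposal identifies the right destination but does not carry out that computation, and the ``main obstacle'' you describe is not in fact an obstacle.
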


(In the references given, the kernel is taken to be labelled and unrooted, but the labelling can be removed simply at the cost of the factor of $|\mathrm{Sym}(G)|^{-1}$ appearing in the above expression, and the root can be removed as detailed above.) See  Figure~\ref{fig:table} for numerical values when $s=2$. Note that the formula above corresponds to that of Theorem~\ref{th:distr_marginals} when $n=0$ and $\alpha=2$ since then 
\[
w_0=w_2=1 \quad  \text{and} \quad w_i=0 \quad \text{for all other indices } i.
\]
In fact, our proofs in Section~\ref{sec:marginals} can be adapted to recover this case and more generally to obtain the joint distribution of the marginals $\mathsf G_{n,\mathrm{Br}}^s$ via a recursive construction which is particularly simple in this case: starting from the kernel $\mathsf K_{\mathrm{Br}}^s$, at each step a new edge-leaf is attached to an edge chosen uniformly at random from among the set of edges of the pre-existing structure.  (For $s=0$, this is R\'emy's algorithm~\cite{Remy} for generating a uniform binary leaf-labelled tree.)  After $n$ steps, this gives a version of $\mathsf G_{n,\mathrm{Br}}^s$, whose distribution is specified below.

\begin{prop}
\label{th:distr_brow_mar} For every multigraph $G \in \mathbb M_{s,n}$ with internal vertices all of degree 3,
	\[\Pp{\mathsf{G}_{n,{\mathrm{Br}}}^s=G}\propto\frac{1}{\abs{\mathrm{Sym}(G)}2^{\mathrm{sl}(G)}\prod_{e\in \supp(E(G))}\mathrm{mult}(e)!}.\]
\end{prop}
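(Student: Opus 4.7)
The plan is to prove the proposition by induction on $n$, using the specialisation of Marchal's algorithm (defined before Theorem~\ref{thm:MarchalAlg}) to $\alpha = 2$. At this value the weights~(\ref{Marchal's weights}) degenerate to $w_0 = 1$, $w_2 = \alpha - 1 = 1$, and $w_k = 0$ for every $k \ge 3$, because the defining product then contains the vanishing factor $(2-\alpha)$. Consequently only internal vertices of degree exactly $3$ survive with positive weight (contributing only the trivial factor $w_2 = 1$), while each edge carries weight $\alpha - 1 = 1$. Marchal's step thus collapses to: sample an edge uniformly from the multiset $E(\mathsf{G}_{n,\mathrm{Br}}^s)$, subdivide it with a new internal vertex $x$, and attach the new edge-leaf $\{x, n+1\}$ to $x$. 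A first task is to check that $(\mathsf{G}_{n,\mathrm{Br}}^s)_{n \ge 0}$ actually evolves by this rule: this is the Brownian analogue of Theorem~\ref{thm:MarchalAlg}, whose proof parallels the stable one and uses the almost-sure ternarity of the branch-points of the Brownian graph.

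The base case $n = 0$ is precisely Theorem~\ref{th:distr_brow_ker}. For the inductive step, fix $G' \in \mathbb{M}_{s,n+1}$ with every internal vertex of degree $3$, and let $x$ denote the unique internal vertex adjacent to leaf $n+1$. Deleting the edge-leaf $\{x,n+1\}$ and $x$ itself, while merging the two other edges at $x$, produces a unique parent $G \in \mathbb{M}_{s,n}$ together with a distinguished edge $e^{\star}$ in $G$ (the image of the merge). Identity~(\ref{identity:degrees}) at $\alpha = 2$ gives $|E(G)| = 2n + 3s - 1$, depending only on $n$ and $s$, and hence absorbed into the normalising constant. The transition probability of the algorithm is $N(G,G')/|E(G)|$, where $N(G,G')$ counts the elements of the multiset $E(G)$ whose subdivision produces $G'$ (this can exceed $\mathrm{mult}_G(e^{\star})$ when $G$ has non-trivial automorphisms). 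Closing the induction thus reduces to verifying
\[
\frac{N(G,G')}{|\mathrm{Sym}(G)|\, 2^{\mathrm{sl}(G)} \prod_{e \in \supp(E(G))} \mathrm{mult}_G(e)!} \;\propto\; \frac{1}{|\mathrm{Sym}(G')|\, 2^{\mathrm{sl}(G')} \prod_{e' \in \supp(E(G'))} \mathrm{mult}_{G'}(e')!},
\]
with a proportionality constant independent of $G'$.

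The main obstacle is this combinatorial identity. I would compute $N(G,G')$ via orbit-stabiliser: $N(G,G') = \mathrm{mult}_G(e^{\star}) \cdot |\mathrm{Sym}(G)|/|\mathrm{Stab}_{\mathrm{Sym}(G)}([e^{\star}])|$, where the first factor counts the indistinguishable parallel copies of $e^{\star}$ within $E(G)$ and the orbit-stabiliser factor accounts for further edges of $G$ obtained as images of $e^{\star}$ under non-trivial automorphisms. Observing that every automorphism of $G'$ must fix leaf $n+1$ and hence its unique neighbour $x$, restriction to $V(G) = V(G') \setminus \{x\}$ yields an automorphism of $G$ stabilising $[e^{\star}]$, and conversely every such automorphism of $G$ extends uniquely to $G'$; hence $|\mathrm{Stab}_{\mathrm{Sym}(G)}([e^{\star}])| = |\mathrm{Sym}(G')|$. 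Substituting in and cancelling $|\mathrm{Sym}(G')|$ reduces the identity to
\[
\mathrm{mult}_G(e^{\star}) \cdot 2^{\mathrm{sl}(G')} \prod_{e' \in \supp(E(G'))} \mathrm{mult}_{G'}(e')! \;=\; 2^{\mathrm{sl}(G)} \prod_{e \in \supp(E(G))} \mathrm{mult}_G(e)!,
\]
which is verified by a short case analysis on the nature of $e^{\star}$: a simple edge ($\mathrm{mult}_G(e^{\star}) = 1$, no changes in $\mathrm{sl}$ or multiplicities); a self-loop of multiplicity $m$ (which decreases $\mathrm{sl}$ by one and creates a new parallel pair of edges of multiplicity $2$); or one strand of a non-loop multi-edge of multiplicity $m$ (which decreases that multiplicity by one and introduces two new edges of multiplicity $1$). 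In each case the multiplicative changes on the two sides balance exactly, completing the induction. This is the Brownian specialisation of the computation performed in Section~\ref{sec:marginals} for the stable case, and is simpler precisely because no non-trivial vertex weights need to be tracked.
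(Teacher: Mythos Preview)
Your combinatorial induction is correct: the orbit--stabiliser computation of $N(G,G')$, the identification $|\mathrm{Stab}_{\mathrm{Sym}(G)}([e^{\star}])| = |\mathrm{Sym}(G')|$, and the case analysis for the multiplicative identity are all valid. Given the R\'emy-type algorithm and the kernel distribution of Theorem~\ref{th:distr_brow_ker}, this cleanly propagates the claimed formula from $n$ to $n+1$.

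The difficulty is the circularity in your first step. You write that the Brownian algorithm ``parallels the stable one'', but the paper's proof of Theorem~\ref{thm:MarchalAlg} (Section~\ref{sec:Marchal}) \emph{derives} the algorithm from the marginal distribution formula~\eqref{eq:distribution ordered multigraphs} already established in Section~\ref{sec:calculations}: the transition probabilities are computed as ratios $\Pp{\mathsf{G}_{s,n+1}^{\mathrm{ord}}=H^{\mathrm{ord}}}/\Pp{\mathsf{G}_{s,n}^{\mathrm{ord}}=G^{\mathrm{ord}}}$. Paralleling that argument in the Brownian setting therefore requires the Brownian analogue of~\eqref{eq:distribution ordered multigraphs} as input, which (via Lemma~\ref{lem:co}) is equivalent to the proposition you are trying to prove. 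The paper itself remarks that it was ``not at all obvious \emph{a priori} that Marchal's algorithm would generalise'' from the tree to the graph, precisely because the biasing and the vertex-identifications obstruct any naive transfer of R\'emy's algorithm.

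The paper's route is instead to adapt the excursion and gluing-plan calculation of Section~\ref{sec:calculations} to the Brownian case; this yields the ordered distribution directly, Lemma~\ref{lem:co} then gives Proposition~\ref{th:distr_brow_mar}, and only afterwards does one read off the algorithm as in Section~\ref{sec:Marchal}. Your induction is essentially that last step run in reverse, and is a perfectly good way to pass between the distribution and the algorithm once one of them is in hand; but it does not stand alone as a proof of the proposition.
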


As in the stable cases, these distributions are connected to configuration multigraphs. Indeed, let $D^{(\mathrm{Br})}$ denote a random variable with distribution $$\mathbb P(D^{({\mathrm{Br}})}=1)=3/4 \quad  \text{and} \quad \mathbb P(D^{(\mathrm{Br})}=3)=1/4.$$ Consider then the following particular instance of the configuration model.  We fix $n \ge 0$, $m \ge n+1$ and take vertices labelled $0, 1,  \ldots, m-1$ to have i.i.d.\ degrees distributed according to $D^{(\mathrm{Br})}$. We then write $\mathsf C_{n,m}^s$ for the resulting configuration multigraph conditioned to be in $\mathbb M_{s,n}$, after having forgotten the labels $n+1, n+2, \ldots, m-1$. 

\medskip

\begin{cor}
The random multigraph $\mathsf{G}_{n,\mathrm{Br}}^s$ conditioned to have $m$ vertices has the same law as $\mathsf C_{n,m}^s$.
\end{cor}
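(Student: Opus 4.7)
The plan is to compute the law of $\mathsf{C}_{n,m}^s$ directly from the configuration-model formula (\ref{eqn:config}) combined with the i.i.d.\ degree distribution $D^{(\mathrm{Br})}$, and match the result with the expression in Proposition \ref{th:distr_brow_mar}. This is the exact Brownian counterpart of the argument for Corollary \ref{cor:identification}, only simpler because $D^{(\mathrm{Br})}$ is supported on $\{1,3\}$.

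First I would observe that any graph in $\mathbb M_{s,n}$ whose internal vertices are all of degree $3$ necessarily has $m=2n+2s$ vertices: the handshake lemma gives $2|E|=1\cdot(n+1)+3\cdot(m-n-1)=3m-2n-2$, while $|E|=m-1+s$, forcing $m=2n+2s$. Hence the conditioning defining $\mathsf{C}_{n,m}^s$ has positive probability only for this value of $m$, and symmetrically $\mathsf{G}_{n,\mathrm{Br}}^s$ has a deterministic number of vertices equal to $2n+2s$, so the statement need only be checked for $m=2n+2s$.

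Next, fix $G\in \mathbb M_{s,n}$ with only degree-$1$ and degree-$3$ vertices. By orbit--stabiliser, there are exactly $(m-n-1)!/|\mathrm{Sym}(G)|$ labelled multigraphs $G^{\star}$ on the vertex set $\{0,\ldots,m-1\}$ (with leaves already labelled $0,\ldots,n$) that reduce to $G$ after forgetting the labels $n+1,\ldots,m-1$. All such $G^{\star}$ share the same degree sequence, so the probability that the i.i.d.\ $D^{(\mathrm{Br})}$ degrees take these values is the $G$-independent constant $(3/4)^{n+1}(1/4)^{m-n-1}$; conditional on this, formula (\ref{eqn:config}) gives
\[
\mathbb{P}(\text{config graph}=G^{\star})=\frac{(3!)^{m-n-1}}{(2|E|-1)!!\; 2^{\mathrm{sl}(G)}\prod_{e\in \supp(E)}\mathrm{mult}(e)!}.
\]
Summing over the $(m-n-1)!/|\mathrm{Sym}(G)|$ admissible labellings, dividing by the $G$-independent normaliser $\mathbb P(\text{config graph}\in \mathbb M_{s,n})$, and collecting the factors that depend only on $(n,s,m)$, one obtains
\[
\mathbb P(\mathsf{C}_{n,m}^s=G)\;\propto\;\frac{1}{|\mathrm{Sym}(G)|\, 2^{\mathrm{sl}(G)}\prod_{e\in \supp(E)}\mathrm{mult}(e)!},
\]
which matches the formula in Proposition \ref{th:distr_brow_mar}.

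There is no genuine obstacle here; everything is elementary bookkeeping within the configuration model. The only subtle point is the correct enumeration of the labellings of the internal vertices of $G$ up to isomorphism, which produces the factor $|\mathrm{Sym}(G)|^{-1}$. Note that in the stable analogue (Corollary \ref{cor:identification}) an additional factor $\prod_{v\in I(G)} w_{\deg(v)-1}$ survives, coming from the non-constant contribution of $\prod_i d_i!\cdot \mathbb P(D^{(\alpha)}=d_i)$ to the per-graph weight; in the Brownian case this product is constant in $G$, and hence no such factor appears.
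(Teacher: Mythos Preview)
Your proposal is correct and follows exactly the approach the paper uses for the stable analogue (Corollary~\ref{cor:identification}, proved in Section~\ref{sec:configembedded}): compute the configuration-model probability for a fully labelled graph via (\ref{eqn:config}), multiply by the i.i.d.\ degree probabilities, sum over the $(m-n-1)!/|\mathrm{Sym}(G)|$ internal labellings, and compare with the marginal law. Your additional observation that $m=2n+2s$ is forced in the Brownian case (so the conditioning on the vertex count is trivial) is a nice simplification specific to the $\{1,3\}$-valued degree distribution, and your final remark correctly identifies why the weight factor $\prod_v w_{\deg(v)-1}$ disappears here.
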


The paper \cite{ABBrGo} is devoted to the study of the distribution of $\mathcal G_{\mathrm{Br}}^s$ for $s \ge 0$. In particular, it is shown there that a version of $\mathcal G_{\mathrm{Br}}^s$ can be recovered by gluing appropriately rescaled Brownian continuum random trees along the edges of $\mathsf K_{\mathrm{Br}}^s$ (\cite[Procedure 1]{ABBrGo}) or via a line-breaking construction (\cite[Procedure 2 $\&$ Theorem~4]{ABBrGo}).

Let us turn now to other related work.  The study of scaling limits for critical random graph models was initiated by Aldous in \cite{AldousCritRG}, where he proved in particular the convergence of the sizes and surpluses of the largest components of the Erd\H{o}s-R\'enyi random graph in the critical window, as well as a similar result for the sizes of the largest components in an inhomogeneous random graph model.  This was followed soon afterwards by Aldous and Limic \cite{AldousLimic}, who explored the possible scaling limits for the sizes of the components in a ``rank-one'' inhomogeneous random graph, with the limiting sizes encoded as the lengths of excursions above past-minima of a so-called thinned L\'evy process.

In \cite{ABBrGo12}, it was shown that Aldous' result for the sizes and surpluses of the largest components in a critical Erd\H{o}s-R\'enyi random graph could be extended to include also the metric structure of the limiting components; the limiting object is what we refer to here as the \emph{Brownian graph}.  Since that paper, progress has been made in several directions.  One direction has been to demonstrate the \emph{universality} of the Brownian graph (first in terms of component sizes, and then in terms of the full metric structure).  This has been done for  critical rank-one inhomogeneous random graphs \cite{Turova, BhvdHvL3rdMo, BhamidiSenWang17}, for critical Achlioptas processes with bounded size rules \cite{AugMultCoal}, for critical configuration models with finite third moment degrees \cite{NachmiasPeres,Joseph14, Riordan12, DhvdHvLS17, BS16+} and in great generality in \cite{BBSW14+}.

Another line of enquiry, into which the present paper fits, is the investigation of other universality classes, generally those with power law degree distributions.  This has been pursued in the setting of rank-one inhomogeneous random graphs with power-law degrees in \cite{vdH13, BhvdHvLNovel, BhvdHSen18} and with very general weights by \cite{BrDuWa18,BrDuWa20}.
The configuration model with power-law degrees has been treated by \cite{Joseph14, DvdHvLS16, BDvdHS17,BhDhvdHSen2020}.  The last four papers are the most directly related to the topic of the present paper, and so we will discuss them in a little more detail.

In \cite{Joseph14}, Joseph considers the configuration model with i.i.d.\ degrees satisfying the same conditions as us, and proves the convergence in distribution of the component sizes (\ref{eqn:convsizes}).  (He leaves the equivalent convergence in the setting of the graph conditioned to be simple as a conjecture, but this is not hard to prove; see \cite{CKG17+} for the details.)  The results of \cite{CKG17+} in Theorem~\ref{th:C-K+G} thus directly generalise those of Joseph.  Dhara, van der Hofstad, van Leeuwaarden and Sen~\cite{DvdHvLS16} and Bhamidi, Dhara, van der Hofstad, and Sen~\cite{BDvdHS17,BhDhvdHSen2020} consider the component sizes and metric structure respectively for critical percolation on supercritical configuration models with degree sequences satisfying a certain power-law condition. The paper \cite{BDvdHS17} proves a metric space scaling limit, where the limit components are derived from the thinned L\'evy processes mentioned above. This scaling limit is proved in the product Gromov-weak topology, and the result is improved to a convergence in the product Gromov-Hausdorff-Prokhorov sense in \cite{BhDhvdHSen2020}.  This result is in principle somewhat more general in scope than that of \cite{CKG17+}, in that it covers a whole family of deterministic degree sequences; however, it is restricted to the case of critical percolation on a supercritical configuration model, whereas \cite{CKG17+} applies directly to a critical configuration model.   In principle, it should nonetheless be possible to view the stable graph as an appropriately annealed version of the scaling limit of \cite{BDvdHS17}.  However, it is for the moment unclear how to prove independently that the two objects obtained  must be the same.  The limit spaces obtained in \cite{BDvdHS17} are \emph{a priori} much less easy to understand than ours; the advantage of the i.i.d.\ setting is that we get very nice absolute continuity relations with the stable trees which are already well understood.  Obtaining analogous results in the setting of \cite{BDvdHS17} seems much more challenging.  (See \cite{CKG17+} for a more in-depth discussion of these issues and for a list of open problems.)

\subsection{Perspectives}

As discussed above, the results of this paper provide heavy-tailed analogues of those in \cite{ABBrGo}, which have been applied in other contexts.  Firstly, the decomposition into a continuous kernel with explicit distribution plus pendant subtrees played a key role in the proof of the existence of a scaling limit for the minimum spanning tree of the complete graph on $n$ vertices in \cite{ABBrGoMi}.  More specifically, assign the edges of the complete graph i.i.d.\ random edge-weights with $\mathrm{Exp}(1)$ distribution.  Now find the spanning tree $M_n$ of the graph with minimum total edge-weight.  (The law of $M_n$ does not depend on the weight distribution as long as it is non-atomic.)  Think of $M_n$ as a measured metric space in the usual way by endowing it with the graph distance $d_n$ and the uniform probability measure $\mu_n$ on its vertices. The main result of \cite{ABBrGoMi} is that
\[
(M_n, n^{-1/3} d_n, \mu_n) \convdist (\mathcal{M}, d, \mu)
\]
as $n \to \infty$, in the Gromov-Hausdorff-Prokhorov sense, where the limit space $(\mathcal{M},d,\mu)$ is a random measured $\R$-tree having Minkowski dimension 3 almost surely.  This convergence has, up to a constant factor, recently been shown by Addario-Berry and Sen~\cite{ABSen} to hold also for the MST of a uniform random 3-regular (simple) graph or for the MST of a 3-regular configuration model.

Following a scheme of proof similar to that developed in \cite{ABBrGoMi}, it may be possible to use the results of the present paper together with those of \cite{CKG17+} to prove an analogous scaling limit for the minimum spanning tree of the following model.  First, generate a uniform random graph (or configuration model) with i.i.d.\ degrees $D_1, D_2, \ldots, D_n$ with the same power-law tail behaviour as discussed above, but now in the supercritical setting $\nu > 1$.  For the purposes of this discussion, let us also assume that $\Prob{D_1 \ge 3} = 1$.  Under this condition, the graph not only has a giant component, but that component contains all of the vertices with probability tending to 1 \cite[Lemma~1.2]{ChatterjeeDurrett}.  As before, assign the edges of this graph i.i.d.\ random weights with Exp(1) distribution and find the minimum spanning tree $M_n$. Then we conjecture that in this setting we will have
\[
(M_n, n^{-(\alpha-1)/(\alpha+1)} d_n, \mu_n) \convdist (\mathcal{M}, d, \mu),
\]
for some measured $\R$-tree $(\mathcal{M},d,\mu)$. This conjecture will be the topic of future work. 

Another application of the results of \cite{ABBrGo} has been in the context of random maps. The Brownian versions of the graphs $\mathcal{G}^s, s \ge 0$ arise as scaling limits of unicellular random maps on various compact surfaces.  The results of \cite{ABBrGo} have, in particular, been used to study Voronoi cells in these objects.  More specifically, for a surface $S$, let $(\mathcal{U}(S), d, \mu)$ be the continuum random unicellular map on $S$ \cite{ABACFG}, endowed with its mass measure $\mu$, and let $X_1, X_2, \ldots, X_k$ be independent random points sampled from $\mu$.  Let $V_1, V_2, \ldots, V_k$ be the Voronoi cells with centres $X_1, \ldots, X_k$.  Then in \cite{ABACFG} it is shown that
\[
\left(\mu(V_1), \ldots, \mu(V_k)\right) \sim \mathrm{Dir}(1,1,\ldots,1).
\]
In other words, the Voronoi cells of uniform points provide a way to split the mass of the space up uniformly.  In principle, there should exist ``stable'' analogues of this result (in which the mass-split will no longer be uniform).

\subsection{Organisation of the paper}

Section~\ref{sec:SL} is devoted to background on stable trees, and to the description of the distribution of the limiting sequence of metric spaces arising in Theorem~\ref{th:C-K+G} in terms of a spectrally positive $\alpha$-stable L\'evy process. In particular, we give a precise description of the elementary building-blocks $\mathcal G^s, s \geq 0$.
We then enter the core of the paper with Section~\ref{sec:marginals} which is dedicated to the proof of the joint distribution of the discrete marginals $\mathsf G_n^s, n \geq  0$ (Theorems~\ref{th:distr_marginals} and \ref{thm:MarchalAlg}), including the connection to a configuration model stated in Corollary~\ref{cor:identification}. Section~\ref{sec:final} is devoted to the proofs of the construction of the $\R$-graph $\mathcal G^s$ from randomly scaled trees glued to its kernel and of its line-breaking construction (Theorem~\ref{thm:distrGs}, Proposition~\ref{cor:lengthskernel} and Theorem~\ref{th:linebreaking}, as well as Proposition~\ref{prop:approx}). Finally,  in the appendix, Section~\ref{sec:urns}, we recall the definitions and some properties of various distributions (generalized Mittag-Leffler, Beta, Dirichlet and Poisson-Dirichlet), as well as some classical urn model asymptotics, which are used at various points in the paper.

\section{The stable graphs}
\label{sec:SL}

We begin in Section~\ref{sec:stable} with some necessary background on stable trees. In particular, we recall Marchal's algorithm for constructing the discrete ordered marginals, and use it to obtain the joint distribution of various aspects (lengths, weights, local times) of the continuous marginals, which we will need later on. In Section \ref{sec:SLconfiguration}, we turn to the distribution of the limiting sequence of metric spaces arising in Theorem~\ref{th:C-K+G} and in particular to the construction of the stable graphs. 

Throughout this section, we fix $\alpha \in (1,2)$.

\subsection{Background on stable trees}
\label{sec:stable}

\subsubsection{Construction and properties}  \label{subsec:alphastable}

The $\alpha$-stable tree was introduced by Duquesne and Le Gall \cite{DuquesneLeGall}, building on earlier work of Le Gall and Le Jan \cite{LeGallLeJan}. 
Our presentation of this material owes much to that of Curien and Kortchemski~\cite{CRLoop13}, which relies in turn on various key results from Miermont~\cite{Mier05}. 

First, let $\xi$ be a spectrally positive $\alpha$-stable L\'evy process with Laplace exponent
\[
\E{\exp(-\lambda \xi_t)} = \exp(t \lambda^{\alpha}), \quad \lambda \ge 0, \quad t \ge 0.
\]
Now consider a reflected version of this L\'evy process, namely $(\xi_t - \inf_{0 \le s \le t} \xi_s, t \ge 0)$.  It is standard that this process has an associated excursion theory, and that one can make sense of an excursion conditioned to have length 1.  We will write $X$ for this excursion of length 1, and observe that, thanks to the scaling property of $\xi$ we may obtain the law of an excursion conditioned to have length $x > 0$ via $(x^{1/\alpha} X(t/x), 0 \le t \le x)$. See Chaumont~\cite{Chaumont} for more details.

To a normalised excursion $X$ we may associate an $\R$-tree.  In order to do this, we first derive from $X$ a \emph{height function} $H$, defined as follows: for $t \in [0,1]$,
\[
H(t) = \lim_{\varepsilon \to 0^+} \frac{1}{\varepsilon} \int_0^t \I{X(s) < \inf_{s \le r \le t}X(r) + \varepsilon} \mathrm{d}s.
\]
The process $H$ possesses a continuous modification such that $H(0) = H(1) = 0$ and $H(t) > 0$ for $t \in (0,1)$, which we consider in the sequel (see Duquesne and Le Gall~\cite{DuquesneLeGall} for more details).
We then obtain an $\R$-tree in a standard way from $H$ by first defining a pseudo-distance $d$ on $\R_+$ via
\[
d(s,t) = H(s) + H(t) - 2 \inf_{s \wedge t \le r \le s \vee t} H(r).
\]
Now define an equivalence relation $\sim$ by declaring $s \sim t$ if $d(s,t) = 0$.  Then let $\mathcal{T}$ be the metric space obtained by endowing $[0,1]/\sim$ with the image of $d$ under the quotienting operation.  Let us write $\pi: [0,1] \to \mathcal{T}$ for the projection map.  We additionally endow $\mathcal{T}$ with the push-forward of the Lebesgue measure on $[0,1]$ under $\pi$, which is denoted by $\mu$.  The point $\rho := \pi(0) = \pi(1)$ is naturally interpreted as a \emph{root} for the tree.  We will refer to the random variable $(\mathcal{T},d,\mu)$ as the (standard) \emph{$\alpha$-stable tree}.  In the usual notation, for points $x,y \in \mathcal{T}$, we will write $\seg{x}{y}$ for the path between $x$ and $y$ in $\mathcal{T}$, and $\oseg{x}{y}$ for $\seg{x}{y} \setminus \{x,y\}$.  (These are isometric to closed and open line-segments of length $d(x,y)$, respectively.) We can use the root to endow the tree $\mathcal{T}$ with a \emph{genealogical order}: we say $x \preceq y$ if $x \in \seg{\rho}{y}$.  We define the \emph{degree}, $\mathrm{deg}(x)$, of a point $x \in \mathcal{T}$ to be the number of connected components into which its removal splits the space. If there is any potential ambiguity over which metric space we are working in, we will write $\deg_{\cT}(x)$. The \emph{branchpoints} are those with degree strictly greater than 2 and the leaves are those with degree 1; we write $\mathrm{Br}(\mathcal{T}) = \{x \in \mathcal{T}: \mathrm{deg}(x) > 2\}$ and $\mathrm{Leaf}(\mathcal{T}) = \{x \in \mathcal{T}: \mathrm{deg}(x) = 1\}$.  We observe that the distance $d$ induces a natural length measure on the tree $\mathcal{T}$, for which we write $\lambda$.

 We also define a partial order $\preceq$ on $[0,1]$ by declaring
	\begin{equation}\label{def:partial order on 0,1}
		s \preceq t \quad \text{if} \quad s \le t \text{ and } X(s-) \leq \inf_{s \le r \le t} X(r).
	\end{equation} 
(We take as a convention that $X(0-) = 0$.) This partial order is compatible with the genealogical order on $\mathcal{T}$ in the sense that for $x,y \in \mathcal{T}$, $x \preceq y$ if and only if there exist $s,t \in [0,1]$ such that $x = \pi(s)$ and $y = \pi(t)$ and $s \preceq t$.

We will require various properties of $\mathcal{T}$ in the sequel.  We will make use of the fact that the law of $\mathcal{T}$ is invariant under re-rooting at a random point with distribution $\mu$ \cite{HaasPitmanWinkel, DLG09}. So we will sometimes think of the tree as unrooted and regenerate a root from $\mu$ when necessary.  Another key feature of $\mathcal{T}$ is that its branchpoints are all of infinite degree, almost surely.  By Proposition 2 of Miermont~\cite{Mier05}, $x \in \mathrm{Br}(\mathcal{T})$ if and only if there exists a unique $s \in [0,1]$ such that $x = \pi(s)$ and $\Delta X(s) = X(s) - X(s-) > 0$. For all other values $r\in\intervalleff{0}{1}$ such that $\pi(r)=\pi(s)=x$, we have $\inf_{s\leq u \leq r} X(u)=X(r)\geq X(s-)$. For such $s$ associated to a branchpoint $x = \pi(s)$, we will define $N(x) := \Delta X(s)$.  By Miermont's equation \cite[Eq.\ (1)]{Mier05}, for all $x \in \mathrm{Br}(\mathcal T$) this quantity may be almost surely  recovered as
\[
N(x) = \lim_{\varepsilon \to 0+} \frac{1}{\varepsilon} \mu(\{y \in \mathcal T:x \in \seg{\rho}{y}, d(x,y) < \varepsilon\}),
\]
and so $N(x)$ gives a renormalised notion of the degree of $x$.  We will refer to this quantity as the \emph{local time} of $x$, since it plays that role with respect to $H$.

For any $s,t \in [0,1]$ such that $\pi(s) \in \mathrm{Br}(\mathcal{T})$ and $s \preceq t$, we also define the \emph{local time of $\pi(s)$ to the right of $\pi(t)$} to be
\[
N^{\mathrm{right}}(\pi(s),\pi(t)) = \inf_{s \le u \le t} X(u) - X(s-).
\]
Then $N^{\mathrm{right}}(\pi(s),\pi(t)) \in [0,N(\pi(s))]$ is a measure of how far through the descendants of $\pi(s)$ we are when we visit $\pi(t)$.  (Indeed, since $\pi(s) \in \mathrm{Br}(\mathcal{T})$, if $s \preceq t$ and $s \preceq u$ with $N^{\mathrm{right}}(\pi(s), \pi(t)) > N^{\mathrm{right}}(\pi(s), \pi(u))$ then necessarily $t < u$.)
By Corollary 3.4 of \cite{CRLoop13}, we can express $X(t)$ as the sum of the atoms of local time along the path from the root to $\pi(t)$:
\begin{equation}\label{eq:local time on the right of the path from the root}
X(t)=\sum_{0\preceq s \preceq t} N^{\mathrm{right}}(\pi(s),\pi(t)),
\end{equation}
almost surely for all $t\in\intervalleff{0}{1}$.
For any $s \preceq t$, we define the local time along the path $\oseg{\pi(s)}{\pi(t)}$ by
\begin{align*}
N(\oseg{\pi(s)}{\pi(t)}) & :=  \sum_{b \in \mathrm{Br}(\mathcal{T}) \cap \oseg{\pi(s)}{\pi(t)}} N(b), \\
\intertext{and the local time to the right along the path $\oseg{\pi(s)}{\pi(t)}$ by}
N^{\mathrm{right}}(\oseg{\pi(s)}{\pi(t)}) & := \sum_{b \in \mathrm{Br}(\mathcal{T}) \cap \oseg{\pi(s)}{\pi(t)}} N^{\mathrm{right}}(b,\pi(t)) = X(t-) - X(s),
\end{align*}
where we observe that all of these sums are over countable sets.

\subsubsection{Marchal's algorithm for ordered trees}
\label{sec:MarchalTree}

Consider an infinite sample of leaves from $(\mathcal T,d,\mu)$ obtained as the images of i.i.d.\ uniform random variables $U_1, U_2, \ldots$ on $[0,1]$ under the quotienting.  These leaves, which we label $1, 2, \ldots$, inherit an order from $[0,1]$.  For $n \in \mathbb N$, let $\mathcal T^{\mathrm{ord}}_n$ be an ordered leaf-labelled version of the subtree of $\mathcal T$ spanned by the root and the first $n$ leaves (the order being inherited from the leaves) and $\mathsf T^{\mathrm{ord}}_n$ its combinatorial shape, also with leaf-labels.  Formally, 
$$\mathsf T^{\mathrm{ord}}_n=\mathrm{shape}(\mathcal T^{\mathrm{ord}}_n)$$
where, for any compact rooted (say at $\rho$) real tree $\tau$ (possibly ordered), $\mathrm{shape}(\tau)$ is the (possibly ordered) rooted discrete tree $(V,E)$ with no vertex of degree 2 except possibly the root, where
\begin{equation}
\label{def:shape}
V=\{\rho\} \cup \{v \in \tau\backslash \{\rho\}:\mathrm{deg}_{\tau}(v) \neq 2\} \ \text{and} \ E=\Big\{\{u,v\}:\ u,v\in V,\ \mathrm{deg}_{\tau}(w)=2, \forall w \in ]]u,v[[ \text{ and } \rho \notin ]]u,v[[\Big\}.
\end{equation}
We define the shape of a discrete tree similarly. Note that, in fact, all of the trees we consider will have a root of degree 1: they are \emph{planted}.

For any $n\geq 1$, we denote by $\A_n$ the set of planted ordered finite trees with $n$ labelled leaves, with labels from $1$ to $n$, and no vertex of degree $2$. The root is thought of as a leaf with label $0$. In \cite[Section 3]{DuquesneLeGall}, Duquesne and Le Gall show that  for each tree $T\in\A_n$ with set of internal vertices $I(T)$, 
\begin{equation}
\label{plane tree distribution}
\mathbb P\big(\mathsf T^{\mathrm{ord}}_n=T\big)\propto \prod_{u\in I(T)}\frac{w_{\mathrm{deg}_T(u)-1}}{(\mathrm{deg}_T(u)-1)!},
\end{equation}
where the weights $(w_k,k\geq 0)$ were defined in (\ref{Marchal's weights}). In other words, $\mathsf T^{\mathrm{ord}}_n$ is distributed as a planted version of a Galton-Watson tree with offspring distribution $\eta_{\alpha}$ as defined in Section \ref{sec:marginalsintro} (below Corollary \ref{cor:identification}), conditioned on having $n$ leaves uniformly labelled from $1$ to $n$.

Building on this result, in \cite{Marchal} Marchal proposed  a recursive construction of a sequence with the same law as $(\mathsf T^{\mathrm{ord}}_n,n \geq 1)$.  (In fact, Marchal gave a construction of the non-ordered versions of the trees $\mathsf T^{\mathrm{ord}}_n,n \geq 1$ but combined with \cite[Section 2.3]{Marchal} we easily obtain an ordered version.) For any $n \geq 1$ and any $T\in\A_n$, we construct randomly a tree in $\A_{n+1}$ as follows.
\begin{enumerate}
\item[(1)] Assign to every edge of $T$ a weight $\alpha-1$ and every internal vertex $u$ a weight $\deg_T(u)-1-\alpha$; the other vertices have weight 0;
\item[(2)] Choose an edge/vertex with probability proportional to its weight and then
\begin{itemize}
	\item if it is a vertex, choose a uniform corner around this vertex, attach a new edge-leaf in this corner and give the leaf the label $n+1$,
	\item if it is an edge, create a new vertex which splits the edge into two edges, and attach an edge-leaf with leaf labelled $n+1$ pointing to the left/right with probability $1/2$.
\end{itemize}
\end{enumerate}
If we start with the unique element of $\A_1$ and apply this procedure recursively, we obtain a sequence of trees distributed as $(\mathsf T^{\mathrm{ord}}_n,n \geq 1)$.

\noindent \textbf{Asymptotic behaviour.} Consider now the discrete trees as metric spaces, endowed with the graph distance. Fix $k$ and for each $k \leq n$ let $\mathsf T^{\mathrm{ord}}_{k}(n)$ be the subtree of $\mathsf T^{\mathrm{ord}}_n$ spanned by the leaves with labels $1,2,\ldots, k$ and the root. Hence, $\mathsf T^{\mathrm{ord}}_k=\mathrm{shape}(\mathsf T^{\mathrm{ord}}_{k}(n))$ but the distances in $\mathsf T^{\mathrm{ord}}_{k}(n)$ are inherited from those in $\mathsf T^{\mathrm{ord}}_n$. We may therefore view $\mathsf T^{\mathrm{ord}}_{k}(n)$ as a discrete tree having the same vertex- and edge-sets as $\mathsf T^{\mathrm{ord}}_k$, but where the edges now have lengths. Similarly for $\mathcal T_k^{\mathrm{ord}}$. Again from Marchal~\cite{Marchal}, we have
\begin{equation}
\label{cv:fdtrees}
\frac{\mathsf T^{\mathrm{ord}}_{k}(n)}{n^{1-1/\alpha}} \ \underset{n \rightarrow \infty}{\overset{\mathrm{a.s.}} \longrightarrow} \  \alpha \cdot \mathcal T^{\mathrm{ord}}_k,
\end{equation}
as $n \to \infty$, where the convergence means that the rescaled lengths of the edges of $\mathsf T^{\mathrm{ord}}_{k}(n)$ converge to the lengths, multiplied by $\alpha$, of the corresponding edges in $\mathcal T^{\mathrm{ord}}_k$. 
This convergence of random finite-dimensional marginals can be improved when considering trees as metric spaces (i.e.\ we forget the order) equipped with probability measures. Indeed, if $\mathsf T_n$ denotes the unordered version of $\mathsf T^{\mathrm{ord}}_{n}$, with leaves still labelled $0,1,2,\ldots,n$ ($0$ is the root), $\mu_n$ the uniform probability measure on these leaves, then we have that 
\begin{equation}
\label{cv:GHPtrees}
\left( \frac{\mathsf T_n}{n^{1-1/\alpha}}, \mu_n,0,\ldots,k\right)  \ \underset{n \rightarrow \infty}{\overset{\mathrm{a.s.}} \longrightarrow} \  \alpha \cdot (\mathcal T,\mu, 0,\ldots,k)
\end{equation}
for the $(k+1)$-pointed Gromov-Hausdorff-Prokhorov topology on the set of measured $(k+1)$-pointed compact trees, for each integer $k$. (See e.g.\ \cite[Section 6.4]{MiermontTessellations} for a definition of this topology.) The convergence (\ref{cv:GHPtrees}) was first proved in probability in \cite[Corollary 24]{HMPW} and then improved to an almost sure convergence in \cite[Section 2.4]{CurienHaas}.

Suppose now that $\mathsf T^{\mathrm{ord}}_k$ has edge-set $E(\mathsf T^{\mathrm{ord}}_k)$, labelled arbitrarily as $e_i,1 \leq i\leq \abs{E(\mathsf T^{\mathrm{ord}}_k)}$, and internal vertices $I(\mathsf T^{\mathrm{ord}}_k)$, labelled arbitrarily as $v_j$, $1 \leq j \leq \abs{I(\mathsf T^{\mathrm{ord}}_k)}$.  As discussed above, for $k \le n$, the internal vertices $I(\mathsf T^{\mathrm{ord}}_k)$ all have counterparts in $\mathsf T^{\mathrm{ord}}_{k}(n)$, which we will also call $v_j$, $1 \leq j \leq \abs{I(\mathsf T^{\mathrm{ord}}_k)}$.  To each edge $e_i \in E(\mathsf T^{\mathrm{ord}}_k)$ there corresponds a path $\gamma_i$ in $\mathsf T^{\mathrm{ord}}_{k}(n)$ whose endpoints are elements of $\{v_j, 1 \le j \le |I(\mathsf T^{\mathrm{ord}}_k)|\} \cup \{0,1,\ldots,k\}$. Write $\gamma_i^{\circ}$ for the same path with its endpoints removed ($\gamma_i^{\circ}$ may be empty).  Since $\mathsf T^{\mathrm{ord}}_{k}(n) \subset \mathsf T^{\mathrm{ord}}_n$, we refer to the corresponding vertices and paths in $\mathsf T^{\mathrm{ord}}_n$ by the same names.  

\begin{figure}
\begin{center}
\includegraphics[width=12cm]{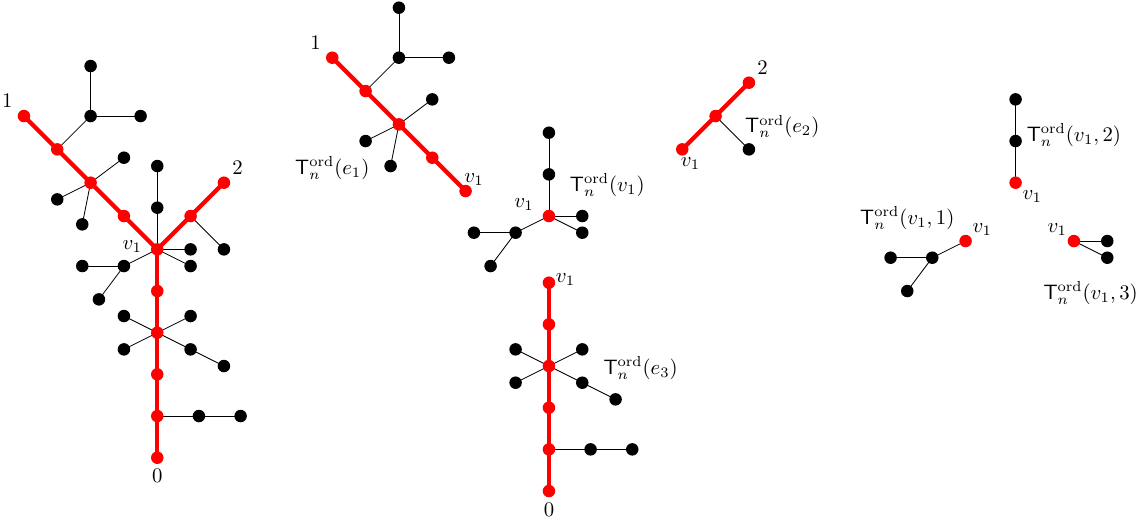}
\end{center}
\caption{Left: the tree $\mathsf T^{\mathrm{ord}}_{n}$ for $n =18$ (leaf-labels $3, \ldots, 18$ are suppressed for purposes of readability).  $\mathsf T^{\mathrm{ord}}_2(n)$ is emphasised in red and bold. The tree $\mathsf T_2^{\mathrm{ord}}$ has a single internal vertex called $v_1$ and edges $e_1 = \{v_1, 1\}$, $e_2 = \{v_1,2\}$ and $e_3=\{v_1,0\}$. The corresponding paths in $\mathsf T_{2}^{\mathrm{ord}}(n)$ have lengths 4, 2 and 5 respectively. Middle: the subtrees $\mathsf T_{n}^{\mathrm{ord}}(e_1)$, $\mathsf T_{n}^{\mathrm{ord}}(e_2)$, $\mathsf T_{n}^{\mathrm{ord}}(e_3)$ and $\mathsf T_{n}^{\mathrm{ord}}(v_1)$. Right: the subtrees $\mathsf T_n^{\mathrm{ord}}(v_1,1)$, $\mathsf T_n^{\mathrm{ord}}(v_1,2)$ and $\mathsf T_n^{\mathrm{ord}}(v_1,3)$.}
\label{fig:subtrees}
\end{figure}

We will now give names to certain important subtrees of $\mathsf T^{\mathrm{ord}}_n$ and refer the reader to Figure~\ref{fig:subtrees} for an illustration.  For each vertex $v \in V(\mathsf{T}^{\mathrm{ord}}_n)$, the unique directed path from $v$ to $0$ has a first point $\mathrm{int}(v)$ of intersection with $\mathsf{T}^{\mathrm{ord}}_k(n)$.  For $1 \le j \le |I(\mathsf T_k^{\mathrm{ord}})|$, let $\mathsf{T}_n^{\mathrm{ord}}(v_j)$ be the subtree induced by the set of vertices $\{v: \mathrm{int}(v) = v_j\}$ and rooted at $v_j$.  If $\mathrm{int}(v) \notin \{v_j: 1 \le j \le |I(\mathsf T_k^{\mathrm{ord}})|\}$ then $\mathrm{int}(v)$ belongs to $\gamma_i^{\circ}$ for some $1 \le i \le |E(\mathsf T_k^{\mathrm{ord}})|$.  Let $\mathsf{T}_n^{\mathrm{ord}}(e_i)$ be the subtree of $\mathsf{T}_n^{\mathrm{ord}}$ induced by the vertices $\{v \in V(\mathsf{T}^{\mathrm{ord}}_n): \mathrm{int}(v) \in \gamma_i^{\circ}\} \cup \gamma_i$ and rooted at the endpoint of $\gamma_i$ closest to the root of $\mathsf T_n^{\mathrm{ord}}$.

If $\mathrm{deg}_{\mathsf{T}_k^{\mathrm{ord}}}(v_j) = d_j$ then $\mathsf{T}_n^{\mathrm{ord}}(v_j)$ can be split up into separate subtrees descending from the $d_j$ different corners of $v_j$.  We list these subtrees in clockwise order from the root as $\mathsf{T}_n^{\mathrm{ord}}(v_j, \ell)$, $1 \le \ell \le d_j$.

For each $e_i, 1 \le i \le |E(\mathsf{T}_k^{\mathrm{ord}})|$ then denote by
\begin{itemize}
\item $L_{n}(e_i)$  the length of $\gamma_i$ in $\mathsf T^{\mathrm{ord}}_{k}(n)$,
\item $M_{n}(e_i)$ the number of leaves in the subtree $\mathsf T_n^{\mathrm{ord}}(e_i)$,
\item $N_{n}(e_i)$ the number of edges of $\mathsf T^{\mathrm{ord}}_n(e_i)$ adjacent to $\gamma_i$,
\item $N^{\mathrm{right}}_{n}(e_i)$ the number of edges of $\mathsf T^{\mathrm{ord}}_n(e_i)$ attached to the right of $\gamma_i$,
\item $N_n(e_i, \ell)$ the degree $-2$ of the $\ell$th largest branchpoint along the path $\gamma_i$ in $\mathsf T^{\mathrm{ord}}_n(e_i)$, for $\ell \ge 1$, with ties broken arbitrarily,
\item $N^{\mathrm{right}}_n(e_i, \ell)$ the degree to the right of the $\ell$th largest branchpoint along the path $\gamma_i$ in $\mathsf T^{\mathrm{ord}}_n(e_i)$, for $\ell \ge 1$ (with the same labelling as in the previous point).
\item $L_n(e_i,\ell)$ the distance from the $\ell$th largest branchpoint of $\gamma_i$ to the root (endpoint nearest 0 in $\mathsf T^{\mathrm{ord}}_n$) of $\mathsf T^{\mathrm{ord}}_n(e_i)$, $\ell\geq 1$, again with the same labelling.
\end{itemize}
Observe that $N_n(e_i) = \sum_{\ell \ge 1} N_n(e_i, \ell)$ and $N^{\mathrm{right}}_{n}(e_i) = \sum_{\ell \ge 1} N^{\mathrm{right}}_n(e_i, \ell)$.

Similarly, for each vertex $v_j$, $1 \leq j \leq \abs{I(\mathsf T^{\mathrm{ord}}_n)}$, denote by
\begin{itemize}
\item $N_{n}(v_j)$ the degree of $v_j$ in $\mathsf T^{\mathrm{ord}}_n$ (i.e.\ $\mathrm{deg}_{\mathsf T_n^{\mathrm{ord}}}(v_j)$),
\item $N_{n}(v_j,\ell)$ the degree of $v_j$ in $\mathsf{T}_n^{\mathrm{ord}}$ in the $\ell$th corner counting clockwise from the root, for $1 \leq \ell \leq \mathrm{deg}_{\mathsf T_k^{\mathrm{ord}}}(v_j)$, 
\item $M_{n}(v_j)$ the number of leaves in $\mathsf{T}_n^{\mathrm{ord}}(v_j)$,
\item $M_{n}(v_j,\ell)$ the number of leaves in $\mathsf{T}_n^{\mathrm{ord}}(v_j, \ell)$, for $1 \leq \ell \leq \mathrm{deg}_{\mathsf T_k^{\mathrm{ord}}}(v_j)$. 
\end{itemize}
We use the same edge- and vertex-labels for the corresponding parts of $\mathcal T^{\mathrm{ord}}_k$.
Since $\mathcal T^{\mathrm{ord}}_k$ is (an ordered version of) a subset of $\mathcal T$, we have that $e_i$ corresponds to an open path $\oseg{x_{i,1}}{x_{i,2}}$ for some pair of points $x_{i,1}, x_{i,2} \in \mathcal{T}$ such that $x_{i,1} \preceq x_{i,2}$.   
Let $L(e_i)=d(x_{i,1}, x_{i,2})$ be the length of this path. We will abuse notation somewhat by writing $N(e_i)$ and $N^{\mathrm{right}}(e_i)$ instead of $N(\oseg{x_{i,1}}{x_{i,2}})$ and $N^{\mathrm{right}}(\oseg{x_{i,1}}{x_{i,2}})$ for the local time of the edge and the local time to the right of the edge respectively.
For $\ell \ge 1$, we will write $N(e_i, \ell)$ for the local time of the $\ell$th largest branchpoint along $\oseg{x_{i,1}}{x_{i,2}}$ (with ties broken arbitrarily), $N^{\mathrm{right}}(e_i, \ell)$ for the local time to the right at the same branchpoint, and $L(e_i,\ell)$ for the distance from that branchpoint to the lower endpoint $x_{i,1}$ of $e_i$.
Each vertex $v_j$ corresponds to some point of $\mathcal{T}$, which by abuse of notation we will also call $v_j$.  (Note that, of course, we must have $\{v_j: 1 \le j \le \abs{I(\mathsf{T}_k^{\mathrm{ord}})}\} \cup \{0,1,\ldots,k\}= \{x_{i,p}: 1 \le i \le \abs{E(\mathsf{T}_k^{\mathrm{ord}})}, p=1,2\}$.)

Let $\mathcal{T}(e_i)$ be the subtree of $\mathcal{T}$ containing $\seg{x_{i,1}}{x_{i,2}}$, formally defined by 
\[
\mathcal{T}(e_i) = \{x \in \mathcal{T}: \seg{\rho}{x} \cap \oseg{x_{i,1}}{x_{i,2}} \neq \emptyset, x_{i,2} \notin \seg{\rho}{x}\} \cup \{x_{i,1},x_{i,2}\}.
\]
Let $M(e_i) = \mu(\mathcal{T}(e_i))$.  Let $\mathcal{T}(v_j)$ be the subtree of $\mathcal{T}$ attached to $v_j$, namely 
\[
\mathcal{T}(v_j) = \{x \in \mathcal{T}: v_j \in \seg{\rho}{x}, \oseg{v_j}{x} \cap \seg{x_{i,1}}{x_{i,2}} = \emptyset \text{ for all } 1 \le i \le \abs{E(\mathsf{T}_n^{\mathrm{ord}})}\}.
\]
Let $M(v_j) = \mu(\mathcal{T}(v_j))$.  As in the discrete case, we can split up $\mathcal{T}(v_j)$ into subtrees sitting in the $\mathrm{deg}_{\mathsf{T}_k^{\mathrm{ord}}}(v_j)$ corners of $v_j$.  We call these $\mathcal{T}(v_j, \ell)$ for $1 \le \ell \le \mathrm{deg}_{\mathsf{T}_k^{\mathrm{ord}}}(v_j)$.  Let 
\[
N(v_j, \ell) = \lim_{\epsilon \to 0+} \frac{1}{\epsilon} \mu(\{y \in \mathcal{T}(v_j, \ell): d(v_j,y) < \epsilon\}).
\]

\begin{lem}
\label{lem:grandeurslimites}
We have the almost sure joint convergence, for $1 \leq i\leq  \abs{E(\mathsf T_k^{\mathrm{ord}})}$ and $\ell \ge 1$,
\begin{align*}
& \frac{L_{n}(e_i)}{n^{1-1/\alpha}}  \ \underset{n \rightarrow \infty} \longrightarrow \  \alpha \cdot  L(e_i), 
\qquad\frac{M_n(e_i)}{n} \ \underset{n \rightarrow \infty} \longrightarrow \ M(e_i),  \\
&  \frac{N_{n}(e_i)}{n^{1/\alpha}}  \ \underset{n \rightarrow \infty} \longrightarrow \ N(e_i), \qquad \frac{N^{\mathrm{right}}_{n}(e_i)}{n^{1/\alpha}}  \ \underset{n \rightarrow \infty} \longrightarrow \ N^{\mathrm{right}}(e_i),  \\
& \frac{N_{n}(e_i,\ell)}{n^{1/\alpha}}  \ \underset{n \rightarrow \infty} \longrightarrow \ N(e_i,\ell), \qquad \frac{N^{\mathrm{right}}_{n}(e_i, \ell)}{n^{1/\alpha}}  \ \underset{n \rightarrow \infty} \longrightarrow \ N^{\mathrm{right}}(e_i,\ell),
\end{align*}
and for $1 \leq j \leq \abs{I(\mathsf T_k^{\mathrm{ord}})}$, $1 \leq \ell \leq \mathrm{deg}_{\mathsf T_k^{\mathrm{ord}}}(v_j)$,
\begin{align*}
& \frac{M_{n}(v_j)}{n} \ \underset{n \rightarrow \infty} \longrightarrow \ M(v_j),  \\
& \frac{N_n(v_j)}{n^{1/\alpha}}  \ \underset{n \rightarrow \infty} \longrightarrow \  {N(v_j)},  \qquad \frac{N_n(v_j, \ell)}{n^{1/\alpha}}  \ \underset{n \rightarrow \infty} \longrightarrow \  {N(v_j, \ell)}.  
\end{align*}
\end{lem}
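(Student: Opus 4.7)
The plan is to extract the length and mass convergences directly from the almost sure $(k+1)$-pointed Gromov-Hausdorff-Prokhorov convergence (\ref{cv:GHPtrees}), and then to obtain the degree- and local-time-type convergences by combining urn-model asymptotics (as recalled in Section~\ref{sec:urns}) with Miermont's characterisation of the local time at a branchpoint as a renormalised local $\mu$-mass. Joint almost sure convergence is secured by working on a single probability space on which $(\mathsf T_n^{\mathrm{ord}})_{n \ge k}$ is generated recursively from $\mathsf T_k^{\mathrm{ord}}$ by Marchal's rule and on which (\ref{cv:GHPtrees}) holds almost surely.

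For $L_n(e_i)/n^{1-1/\alpha} \to \alpha L(e_i)$, the quantity $L_n(e_i)$ is the graph distance in $\mathsf T_n$ between the two endpoints of $\gamma_i$, which lie among the fixed marked points of $\mathsf T_k^{\mathrm{ord}}(n)$; convergence of the rescaled distances between these points to the corresponding distances in $\alpha \cdot \mathcal T$ is precisely the pointed part of (\ref{cv:GHPtrees}). For the masses $M_n(e_i)/n$ and $M_n(v_j)/n$, I would use that $\mathcal T(e_i)$ and $\mathcal T(v_j)$ are almost surely $\mu$-continuity sets, their boundaries being a finite union of branchpoints and edge endpoints (all $\mu$-null), so that the Prokhorov part of (\ref{cv:GHPtrees}) delivers the convergence via the Portmanteau theorem.

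For the total degree at $v_j$, conditionally on the history of $(\mathsf T_n^{\mathrm{ord}})_{n \ge k}$, the probability that the $(n{+}1)$-st Marchal step attaches a new edge-leaf directly at $v_j$ is $(N_n(v_j)-1-\alpha)/(\alpha n - 1)$, by (\ref{Marchal's weights})--(\ref{identity:degrees}) specialised to $s = 0$. Hence $(N_n(v_j))_{n \ge k}$ is a generalised P\'olya urn whose normalisation $n^{-1/\alpha} N_n(v_j)$ converges almost surely to a positive limit by the standard asymptotics of Section~\ref{sec:urns}. To identify the limit with $N(v_j)$, I would match the renormalised local mass at $v_j$ on both sides: on the continuum side, $N(v_j) = \lim_{\varepsilon \to 0} \varepsilon^{-1} \mu(\{y \in \mathcal T(v_j) : d(v_j,y) < \varepsilon\})$ (recalled in Section~\ref{subsec:alphastable}); on the discrete side, taking $\varepsilon = \alpha n^{-1/\alpha}$ and using (\ref{cv:GHPtrees}), the same local-mass quantity is matched by $N_n(v_j)/n^{1/\alpha}$ up to lower-order corrections coming from the finitely many leaves of $\mathsf T_n^{\mathrm{ord}}$ at distance $O(1)$ from $v_j$. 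The refinement to $N_n(v_j, \ell) \to N(v_j, \ell)$ follows from a nested urn in which arrivals at $v_j$ are routed to the $\mathrm{deg}_{\mathsf T_k^{\mathrm{ord}}}(v_j)$ corners according to the same Marchal weights, with the limit identified corner-by-corner by the same mass-matching argument applied to $\mathcal T(v_j,\ell)$. The edge-based totals are then treated via the decomposition $N_n(e_i) = \sum_{\ell \ge 1} N_n(e_i, \ell)$, each term handled as above (the branchpoint of rank $\ell$ along $\gamma_i$ being well-defined for $n$ large); the right-versus-left refinement uses the planar orientation of $\mathsf T_n^{\mathrm{ord}}$, matched on the continuum side with the partial order (\ref{def:partial order on 0,1}).

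The main obstacle is exchanging the almost sure convergence with the countable sums defining $N_n(e_i)$ and $N^{\mathrm{right}}_n(e_i)$: I will need a tail estimate, uniform in $n$, of the form $\sum_{\ell > L} N_n(e_i, \ell)/n^{1/\alpha} \to 0$ as $L \to \infty$. I would derive this from the $(1/\alpha)$-stable structure of the ranked branchpoint local times along a continuum path (controlled by the Poisson point process of jumps of $X$ from Chaumont's description), combined with a moment or martingale estimate at the discrete level coming from the urn dynamics.
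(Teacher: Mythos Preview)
Your treatment of the lengths and masses is sound, though for the masses the paper takes the more direct route: since the leaves $k+1,k+2,\ldots$ are i.i.d.\ samples from $\mu$, the fraction falling in $\mathcal T(e_i)$ or $\mathcal T(v_j)$ converges almost surely to $M(e_i)$ or $M(v_j)$ by the strong law of large numbers---no Portmanteau is needed. For the local-time quantities, the paper does not attempt an independent proof at all: it simply cites Dieuleveut~\cite[Lemmas~2.7 and 2.8]{Dieuleveut15}, where these almost-sure convergences are established via the convergence of the encoding Lukasiewicz paths.

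The gap in your proposal is the identification step. The urn argument does give almost sure convergence of $N_n(v_j)/n^{1/\alpha}$ to \emph{some} random variable $\tilde N(v_j)$ on the joint probability space, but you must show $\tilde N(v_j)=N(v_j)$ almost surely, not merely in law. Your mass-matching heuristic does not do this: $N_n(v_j)$ is the \emph{degree} of $v_j$ in $\mathsf T_n^{\mathrm{ord}}$, i.e.\ the number of edges incident there, whereas $\varepsilon^{-1}\mu_n(B(v_j,\varepsilon))$ counts (rescaled) \emph{leaves} near $v_j$. Most edges at $v_j$ lead to subtrees whose leaves lie far from $v_j$ at the relevant scale, so there is no direct matching at $\varepsilon\asymp n^{-(1-1/\alpha)}$. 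What one actually needs is the link between the number of subtrees of $\mathcal T$ at $v_j$ hit by the first $n$ uniform leaves and the local time $N(v_j)$; this is essentially the content of Dieuleveut's lemmas (proved through the coding functions), and is not a consequence of the urn asymptotics plus GHP convergence alone. The same issue arises, compounded by the tail estimate you already flag, for $N_n(e_i)$, $N_n(e_i,\ell)$ and their right-versions. Absent an independent argument of Dieuleveut's type, your proof of the local-time convergences is incomplete.
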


\begin{proof}
The convergence of the lengths is Marchal's result (\ref{cv:fdtrees}).
The convergence of the local times is proved in Dieuleveut \cite[Lemma~2.7 \& Lemma~2.8]{Dieuleveut15}. Finally, the convergences of the subtree masses are an immediate consequence of the strong law of large numbers. Note that since we are dealing with a countable collection of random variables, these convergences indeed hold simultaneously almost surely.
\end{proof}

\subsubsection{Marginals of the stable tree}
\label{sec:marginalsstable}

We now state explicitly the joint distributions of all of the limit quantities in Lemma~\ref{lem:grandeurslimites}.

\begin{prop} \label{prop:jointlaws}
Conditionally on $\mathsf{T}_k^{\mathrm{ord}}$ with $\abs{E(\mathsf{T}_k^{\mathrm{ord}})} = m$ and $\abs{I(\mathsf{T}_k^{\mathrm{ord}})} = r := m-k$, with $\mathrm{deg}_{\mathsf{T}_k^{\mathrm{ord}}}(v_j) = d_j$ for $1 \le j \le r$, we have jointly
\begin{align*}
\left( M(e_1), \ldots, M(e_{m}), M(v_1), \ldots, M(v_r) \right) & \equidist (D_1, D_2, \ldots, D_{m+r})\\
\left(N(e_1), \ldots, N(e_m), N(v_1), \ldots, N(v_r) \right) & \equidist (D_1^{1/\alpha} R_1, \ldots, D_{m+r}^{1/\alpha} R_{m+r}) \\
\alpha \cdot \left( L(e_1), \ldots, L(e_m) \right) & \equidist  (D_1^{1-1/\alpha} R_1^{\alpha-1} \bar{R}_1, D_2^{1-1/\alpha} R_2^{\alpha-1} \bar{R}_2, \ldots, D_m^{1-1/\alpha} R_m^{\alpha-1} \bar{R}_m), 
\end{align*}
where the following elements are independent:
\begin{itemize}
\item $(D_1, \ldots, D_m, D_{m+1}, \ldots, D_{m+r}) \sim \mathrm{Dir}(1-1/\alpha, \ldots, 1-1/\alpha, (d_1-1-\alpha)/\alpha, \ldots, (d_r-1-\alpha)/\alpha)$;
\item $R_1, R_2, \ldots, R_{m+r}$ are mutually independent with $R_1, \ldots, R_m \sim \mathrm{ML}(1/\alpha,1-1/\alpha)$ and $R_{m+i} \sim \mathrm{ML}(1/\alpha,(d_i-1-\alpha)/\alpha)$ for $ 1 \le i \le r$;
\item $\bar{R}_1, \bar{R}_2, \ldots, \bar{R}_m$ are i.i.d.\ $\mathrm{ML}(\alpha-1,\alpha-1)$.
\end{itemize}
Moreover, we have $R_i^{\alpha-1}\bar{R}_i \sim  \mathrm{ML}(1-1/\alpha, 1-1/\alpha)$ for $1 \le i \le m$.

The random variables $N^{\mathrm{right}}(e_i,\ell)/N(e_i, \ell)$ and $L(e_i,\ell)/L(e_i)$ for $1 \le i \le m$, $\ell \ge 1$, the random sequences $(N(e_i, \ell)/N(e_i), \ell \ge 1)$ for $1 \le i \le m$,  and the random vectors $(N(v_j, \ell)/N(v_j), 1 \le \ell \le d_j)$ for $1 \le j \le r$ are mutually independent, and are also independent of $N(e_i), 1 \le i \le m$ and $N(v_j), 1 \le j \le r$. Moreover, we have
\[
\left(\frac{N(e_i,\ell)}{N(e_i)}, \ell \ge 1\right) \sim \mathrm{PD}(\alpha-1,\alpha-1),\quad 1 \le i \le m,
\]
\[
\frac{N^{\mathrm{right}}(e_i, \ell)}{N(e_i, \ell)} \sim \mathrm{U}[0,1], \quad 1 \le i \le m, \quad \ell \ge 1,
\]
\begin{align*}
	\frac{L(e_i,\ell)}{L(e_i)}\sim \mathrm{U}[0,1], \quad 1 \le i \le m, \quad \ell \ge 1,
\end{align*}
and
\[
\left(\frac{N(v_j, \ell)}{N(v_j)}, 1 \le \ell \le d_j\right) \sim \mathrm{Dir}(1,1,\ldots,1), \quad 1 \le j \le r.
\]
\end{prop}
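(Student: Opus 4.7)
The plan is to derive every distributional statement from Marchal's recursive construction applied to $\mathsf T_k^{\mathrm{ord}}$, reading off the limiting random variables from asymptotics of nested P\'olya / Pitman-Yor urns. A direct verification using (\ref{identity:degrees}) with $s=0$ shows that each step of Marchal's algorithm raises the total Marchal weight by exactly $\alpha$, and that the entire increment is absorbed by the compartment (one of the edges $e_i$ or vertices $v_j$ of $\mathsf T_k^{\mathrm{ord}}$) that receives the new leaf. Writing $w_n(e_i)$ and $w_n(v_j)$ for the Marchal weight inside $\mathsf T_n^{\mathrm{ord}}(e_i)$ and $\mathsf T_n^{\mathrm{ord}}(v_j)$, the vector $(w_n(e_1),\ldots,w_n(e_m),w_n(v_1),\ldots,w_n(v_r))$ is therefore a classical P\'olya urn with initial state $(\alpha-1,\ldots,\alpha-1,d_1-1-\alpha,\ldots,d_r-1-\alpha)$ and common reinforcement $\alpha$. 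By the standard P\'olya theorem recalled in the appendix, its almost sure normalised limit is $\mathrm{Dir}(1-1/\alpha,\ldots,1-1/\alpha,(d_1-1-\alpha)/\alpha,\ldots,(d_r-1-\alpha)/\alpha)$; combined with $w_n(e_i)=(\alpha-1)+\alpha M_n(e_i)$, $w_n(v_j)=(d_j-1-\alpha)+\alpha M_n(v_j)$ and Lemma~\ref{lem:grandeurslimites}, this produces the first display.

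Next, I would zoom in on each compartment. For $v_j$, decompose its weight into a centre contribution $N_n(v_j)-1-\alpha$ carried by $v_j$ itself and the Marchal weights of the subtrees growing out of its $N_n(v_j)$ corners. Conditional on the sub-sequence of steps that fall inside this compartment, each step either creates a new corner with a single edge-leaf (``Option A'') or lands inside an already-populated corner (``Option B''); matching the exact acceptance probabilities against the Chinese restaurant recipe identifies the resulting process as a Pitman-Yor $(1/\alpha,(d_j-1-\alpha)/\alpha)$ scheme. Its standard Mittag-Leffler asymptotics then yield $N_n(v_j)/M_n(v_j)^{1/\alpha}\to R_{m+j}\sim\mathrm{ML}(1/\alpha,(d_j-1-\alpha)/\alpha)$, independent of $M_n(v_j)$. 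Separately, the allocation of Option-A events among the $d_j$ ancestral corners is a uniform-reinforcement P\'olya urn with initial state $(1,\ldots,1)$ (Marchal's rule picks a uniform sub-corner among the currently existing ones, and the sub-corners inside ancestral corner $\ell$ number $1+X_\ell^n$), whose limit is $\mathrm{Dir}(1,\ldots,1)$. Conditional independence built into the nested urn then gives independence from $D_{m+j}$ and across compartments.

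Compartment $e_i$ is analogous but carries one more layer. A short algebraic computation shows that the ``spine'' weight (edges of $\gamma_i$ together with the branchpoints on $\gamma_i^{\circ}$) equals $N_n(e_i)+(\alpha-1)$ while the ``off-spine'' weight equals $\alpha M_n(e_i)-N_n(e_i)$. Two nested urns then do the work. The spine-vs-off-spine urn run across the visits to the compartment is Pitman-Yor $(1/\alpha,(\alpha-1)/\alpha)$ and gives $N_n(e_i)/M_n(e_i)^{1/\alpha}\to R_i\sim\mathrm{ML}(1/\alpha,1-1/\alpha)$. Inside the spine, the edge-split versus existing-branchpoint dichotomy is Pitman-Yor $(\alpha-1,\alpha-1)$; this produces $L_n(e_i)/N_n(e_i)^{\alpha-1}\to\bar R_i\sim\mathrm{ML}(\alpha-1,\alpha-1)$, the ranked-size vector $(N(e_i,\ell)/N(e_i))_{\ell\ge1}\sim\mathrm{PD}(\alpha-1,\alpha-1)$, and, by chaining the three ML variables, the length limit $L(e_i)=D_i^{1-1/\alpha}R_i^{\alpha-1}\bar R_i$, with $R_i^{\alpha-1}\bar R_i\sim\mathrm{ML}(1-1/\alpha,1-1/\alpha)$ by a classical product identity for Mittag-Leffler variables. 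The uniform law $N^{\mathrm{right}}(e_i,\ell)/N(e_i,\ell)\sim\mathrm{U}[0,1]$ at each branchpoint comes from a two-colour P\'olya urn on right/left corners: Marchal's $1/2$ coin at the creation of the branchpoint gives initial state $(2,1)$ or $(1,2)$ with equal probability, and the equal mixture of $\mathrm{Beta}(2,1)$ and $\mathrm{Beta}(1,2)$ is precisely $\mathrm{U}[0,1]$. Finally $L(e_i,\ell)/L(e_i)\sim\mathrm{U}[0,1]$ follows from the fact that each new branchpoint is placed at a uniform edge of $\gamma_i$ at its creation and that, in the limit, its relative position along the spine is independent of its eventual rank-size.

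The main obstacle is the bookkeeping: the proposition bundles together a large number of joint and conditional statements, and each piece has to be identified as the correct coordinate of a precisely calibrated nested urn. The cleanest organisation I see is to first prove the outer Dirichlet statement unconditionally, then work compartment-by-compartment conditionally on the compartment masses, invoking standard P\'olya / Pitman-Yor / Chinese-restaurant limit theorems from the appendix at each level of nesting, and finally to use Lemma~\ref{lem:grandeurslimites} to transfer each discrete almost-sure convergence to its continuous-tree counterpart.
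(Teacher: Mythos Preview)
Your proposal is correct and follows essentially the same route as the paper: both derive every limit from Marchal's algorithm by reading off nested P\'olya/Chinese-restaurant urns compartment by compartment, with the paper packaging the three-layer edge urn (mass/local time/length) into a standalone proposition while you describe the same urns inline via Pitman--Yor language. Your treatment of $N^{\mathrm{right}}(e_i,\ell)/N(e_i,\ell)$ via the $\tfrac12\mathrm{Beta}(2,1)+\tfrac12\mathrm{Beta}(1,2)=\mathrm{U}[0,1]$ mixture is a slightly different bookkeeping from the paper's (which starts the left/right corner urn at $(1,1)$, i.e.\ one step earlier), but both are correct.
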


The distributional results for the masses, lengths and total local times may be read off from~\cite{GHlinebreaking}, although the precise dependence between lengths and local times is left somewhat implicit there.  Related results appeared earlier in \cite{HaasPitmanWinkel}. We give a complete proof of Proposition~\ref{prop:jointlaws} via an urn model which we now introduce.

Suppose we have $k$ colours such that each colour has three \emph{types}: $a$, $b$ and $c$.  Let $X_i^a(n)$, $X_i^b(n)$ and $X_i^c(n)$ be the weights of the three types of colour $i$ in the urn at step $n$, respectively, for $1 \le i \le k$.  At each step we draw a colour with probability proportional to its weight in the urn.  If we pick the colour $i$ type $a$, we add weight $\alpha-1$ to colour $i$ type $a$, $2-\alpha$ to colour $i$ type $b$ and $\alpha-1$ to colour $i$ type $c$ (recall that $\alpha \in (1,2)$). If we pick colour $i$ type $b$, we add $1$ to colour $i$ type $b$ and $\alpha-1$ to colour $i$ type $c$. If we pick colour $i$ type $c$, we simply add weight $\alpha$ to colour $i$ type $c$.  We start with
\[
X^a_i(0) = \gamma_i, \quad X^b_i(0) = 0,  \quad X^c_i(0) = 0, \quad 1 \le i \le k.
\]

\begin{prop} \label{prop:urn}
As $n \to \infty$, we have the following almost sure limits:
\begin{align*}
\frac{1}{(\alpha-1) n^{1-1/\alpha}}(X^a_1(n), \ldots, X^a_k(n)) & \to (D_1^{1-1/\alpha} R_1^{\alpha-1} \bar{R}_1, \ldots, D_k^{1-1/\alpha} R_k^{\alpha-1} \bar{R}_k)\\
\frac{1}{n^{1/\alpha}} (X^b_1(n), \ldots, X^b_k(n)) & \to (D_1^{1/\alpha} R_1, \ldots, D_k^{1/\alpha} R_k) \\
\frac{1}{\alpha n} (X^c_1(n), \ldots, X^c_k(n)) & \to (D_1, D_2, \ldots, D_k),
\end{align*}
where the sequences $(D_1, \ldots, D_k)$, $(R_1, \ldots, R_k)$ and $(\bar{R}_1, \ldots, \bar{R}_k)$  are independent; we have $(D_1, \ldots, D_k) \sim \mathrm{Dir}(\gamma_1/\alpha, \ldots, \gamma_k/\alpha)$; the random variables $R_1, \ldots, R_k$ are mutually independent with $R_i \sim \mathrm{ML}(1/\alpha, \gamma_i/\alpha)$; and the random variables $\bar{R}_1, \ldots,\bar{R}_k$ are mutually independent with $\bar{R_i} \sim\mathrm{ML}(\alpha-1,\gamma_i)$.
\end{prop}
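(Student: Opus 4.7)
The plan exploits two structural features of the urn: each draw of colour $i$ contributes exactly $\alpha$ to the total weight of colour $i$, and within each colour the replacement matrix is upper-triangular with eigenvalues $\alpha,\,1,\,\alpha-1$. This suggests a three-stage proof: decouple the $k$ colours by a classical Pólya urn argument; analyse a single-colour 3-type urn at the two non-trivial scales $t^{1/\alpha}$ and $t^{1-1/\alpha}$ via straightforward martingales; and identify the resulting martingale limits as generalised Mittag-Leffler variables.

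First, set $Y_i(n)=X_i^a(n)+X_i^b(n)+X_i^c(n)$. Any draw of colour $i$ contributes exactly $\alpha$ to $Y_i$ and nothing to $Y_j$ for $j\neq i$, so $(Y_1,\ldots,Y_k)$ is a balanced $k$-colour Pólya urn with balance $\alpha$ and initial composition $(\gamma_1,\ldots,\gamma_k)$. The standard results recalled in Section~\ref{sec:urns} give $(Y_i(n)/(\alpha n))_{i=1}^k\to(D_1,\ldots,D_k)\sim \mathrm{Dir}(\gamma_1/\alpha,\ldots,\gamma_k/\alpha)$ almost surely. Moreover, conditional on the sequence of drawn colours, the $k$ single-colour sub-urns are mutually independent, and the one of colour $i$ is updated $T_i(n)=(Y_i(n)-\gamma_i)/\alpha\sim n D_i$ times.

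Next, fix a colour $i$ and write $(A_t,B_t,C_t)$ for its sub-urn state after $t$ steps, so $A_t+B_t+C_t=\gamma_i+\alpha t$. Drawing $a$ or $b$ adds exactly $1$ to $A+B$ while drawing $c$ adds $0$, giving
\begin{equation*}
\mathbb E\bigl[(A+B)_{t+1}\bigm|\mathcal F_t\bigr]=(A+B)_t\Bigl(1+\tfrac{1}{\gamma_i+\alpha t}\Bigr),
\end{equation*}
so that $(A+B)_t\big/\prod_{s<t}(1+(\gamma_i+\alpha s)^{-1})$ is a non-negative martingale. The product is $\sim t^{1/\alpha}$ by $\Gamma$-function asymptotics, hence $(A+B)_t/t^{1/\alpha}\to R$ a.s., and then $C_t/t\to\alpha$ follows because $\alpha<2$ implies $(A+B)_t=o(t)$. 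Considering next the subsequence of steps at which $A+B$ increases (of length $(A+B)_t-\gamma_i\sim R\,t^{1/\alpha}$), the restricted process $(A'_k,B'_k)$ evolves as a balanced 2-type triangular Pólya urn with replacements $a\mapsto(\alpha-1,\,2-\alpha)$ and $b\mapsto(0,1)$, independently of the outer $\{a,b\}$-vs-$c$ decisions; an analogous martingale yields $A'_k/k^{\alpha-1}\to(\alpha-1)\bar R$ a.s., and composing the two time-changes gives $A_t/t^{1-1/\alpha}\to(\alpha-1)R^{\alpha-1}\bar R$ a.s.

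Substituting $t=T_i(n)\sim nD_i$ and combining with the a.s.\ convergence of $T_i(n)/n$ from the first stage then yields all three claimed scaling limits; the three families $(D_i),(R_i),(\bar R_i)$ are globally independent because the sub-urns are conditionally independent given the colour sequence, and inside each colour the inner ($a$-vs-$b$) and outer ($\{a,b\}$-vs-$c$) decision sequences are independent by construction of the urn. The main obstacle is identifying the martingale limits $R$ and $\bar R$ precisely as $\mathrm{ML}(1/\alpha,\gamma_i/\alpha)$ and $\mathrm{ML}(\alpha-1,\gamma_i)$ variables (rather than just as abstract random variables); this will rely on the moment characterisations of the ML distributions and the classical 2-type Pólya-urn limit theorems collected in Section~\ref{sec:urns}.
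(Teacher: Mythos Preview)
Your proposal is correct and follows essentially the same three-level decomposition as the paper's proof: a P\'olya urn for the colour totals $X_i^{a,b,c}$, then for each colour a triangular urn distinguishing $\{a,b\}$ from $c$, then another triangular urn distinguishing $a$ from $b$, with the full process recovered by composing the three time-changes. The only cosmetic difference is that the paper invokes the triangular urn scheme result (Theorem~\ref{thm:urn1}) directly at both inner stages, which simultaneously gives the almost sure convergence and the Mittag--Leffler identification you flag as the remaining obstacle, whereas you sketch the underlying martingale and then defer to Section~\ref{sec:urns} for the law; these amount to the same thing.
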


The proof of Proposition~\ref{prop:urn} appears in Section~\ref{sec:urnproof}.

\begin{proof}[Proof of Proposition~\ref{prop:jointlaws}]
We make use of Marchal's algorithm.  Recall that we are given an ordered tree $\mathsf{T}_k^{\mathrm{ord}}$ with $k$ leaves labelled $1,2,\ldots,k$, $m$ edges and $r$ internal vertices with degrees $d_1, \ldots, d_r$.  Let us set
\[
\gamma_1 = \cdots = \gamma_m = \alpha -1
\]
and
\[
\gamma_{m+1} = d_1 - 1 - \alpha, \ldots, \gamma_{m+r} = d_r - 1 - \alpha.
\]
We then have $ \sum_{i=1}^{m+r}\gamma_i =  \alpha n - 1$.

We now show that the the urn process from Proposition~\ref{prop:urn} naturally occurs within our tree evolving according to Marchal's algorithm.  Colours $1, 2, \ldots, m$ represent the different edges of $\mathsf{T}_k^{\mathrm{ord}}$ and colours $m+1, \ldots, m+r$ represent the different vertices.  For edge $e_i$ of $\mathsf{T}_k^{\mathrm{ord}}$, type $a$ corresponds to the weight of edges inserted along $e_i$; type $b$ corresponds to the weight at vertices along $e_i$; and type $c$ corresponds to the weight in vertices and edges in pendant subtrees hanging off $e_i$ (excluding their roots along $e_i$).  So $X^a_i(n)=(\alpha-1) L_n(e_i)$, $X_i^b(n) = N_n(e_i) +(1-\alpha)(L_n(e_i)-1)$ and $X^c(n) = \alpha M_n(e_i) - N_n(e_i)$.  For vertex $v_j$ of $\mathsf{T}_k^{\mathrm{ord}}$, types $a$ and $b$ together correspond to the weight at $v_j$ and type $c$ corresponds to the weight in edges and vertices in subtrees hanging from $v_j$.  So $X^a_{m+j}(n) + X^b_{m+j}(n) = N_n(v_j) -1 - \alpha$ and $X_{m+j}^c(n) = \alpha M_n(v_j) - N_n(v_j)+d_j$. Applying Proposition~\ref{prop:urn} and Lemma~\ref{lem:grandeurslimites} then yields the claimed distributions for the $L(e_i)$, $N(e_i)$, $M(e_i)$, $N(v_j)$ and $M(v_j)$.

We now turn to $N_n(e_i,\ell), \ell \ge 1$, the ordered numbers of edges attached to the branchpoints along $e_i$.  Independently for $1 \le i \le m$, let $(C_{i, \ell}(n), \ell \ge 1)$ be a Chinese restaurant process with $\beta = \theta = \alpha -1$.  This evolves in exactly the same way as Marchal's algorithm adds new edges along $e_i$.  In particular, we have
\[
(N_n(e_i,\ell), \ell \ge 1) = (C^{\downarrow}_{i,\ell}(N_n(e_i)), \ell \ge 1).
\]
By again composing limits, it follows that
\[
\left(\frac{N(e_i,\ell)}{N(e_i)}, \ell \ge 1\right) \sim \mathrm{PD}(\alpha-1,\alpha-1),
\]
independently for $1 \le i \le m$ and independently of everything else.

Let us now consider how the local time is distributed among the corners of the vertices $v_j$.  This again follows from an urn argument: for the vertex $v_j$ which has degree $d_j$, consider an urn with $d_j$ colours, one corresponding to each corner, $(A_{m+j,1}(n), \ldots A_{m+j, d_j}(n))_{n \ge 0}$.  Start the urn from a single ball of each colour.  Then whenever we insert an edge into the corresponding corner, we increase the number of positions into which we can insert new edges by 1.  Hence, we have precisely P\'olya's urn (see Section~\ref{sec:urns} for a definition) and so by Theorem~\ref{thm:urn2},
\[
\frac{1}{n} (A_{m+j,1}(n), \ldots, A_{m+j,d_j}(n)) \to (\Delta_1, \ldots, \Delta_{d_j})
\]
almost surely, where $(\Delta_1, \ldots, \Delta_d) \sim \mathrm{Dir}(1,1,\ldots,1)$.  We have
\[
(N_n(v_j,\ell), 1 \le \ell \le d_j) = (A_{m+j,\ell}(N_n(v_j)) - 1, 1 \le \ell \le d_j)
\]
and it follows that
\[
\left(\frac{N(v_j,\ell)}{N(v_j)}, 1 \le \ell \le d_j\right) \sim \mathrm{Dir}(1,1,\ldots,1),
\]
independently for $1 \le j \le r$ and independently of everything else.

A similar argument works for the local time to the left and right of the $\ell$th largest vertex along an edge $e_i$: start a two-colour urn $(A_{i,\ell,1}(n), A_{i,\ell,2}(n))_{n \ge 0}$ from one ball of each colour and at each step add a single ball of the picked colour.  Then, again by Theorem~\ref{thm:urn2},
\[
\frac{1}{n} (A_{i,1}(n), A_{i,2}(n)) \to (\Delta,1-\Delta)
\]
almost surely, where $\Delta \sim \mathrm{U}[0,1]$.  We get
\[
N^{\mathrm{right}}_n(e_i,\ell) = A_{i, 2}(N^n(e_i, \ell)) - 1
\]
and so it follows that
\[
\frac{N^{\mathrm{right}}(e_i,\ell)}{N(e_i,\ell)} \sim \mathrm{U}[0,1],
\]
independently for $1 \le i \le m$ and $\ell \ge 1$.
\end{proof}

\begin{rem}\label{rem:distribution temps local aretes et noeuds}
Let $N(T):=N(e_1)+\dots+N(e_m)+N(v_1)+\dots+N(v_r)$. Using Remark~\ref{rem:identityMLDir} below,  we observe the following distributional relation: we have $N(T)\sim \mathrm{ML}(1/\alpha, k-1/\alpha)$ and, independently,
\begin{align*}
\left(\frac{N(e_1)}{N(T)},\dots,\frac{N(e_m)}{N(T)},\frac{N(v_1)}{N(T)},\dots,\frac{N(v_r)}{N(T)}\right)\sim \mathrm{Dir}(\alpha-1,\dots,\alpha-1,d_1-1-\alpha,\dots,d_r-1-\alpha).
\end{align*}
\end{rem}

\subsection{Construction of the stable graphs}
\label{sec:SLconfiguration}

\textbf{Construction from \cite{CKG17+}.}
Returning now to the setting of our graphs, we wish to specify the distribution of the limiting sequence  $\mathrm{C}_i = (C_i, d_{C_i}, \mu_{C_i})$, $i \ge 1$ arising in Theorem \ref{th:C-K+G}.  The details of the following can be found in the paper \cite{CKG17+}.
Our graph notation was introduced in Section~\ref{sec:motivation} and the processes $\xi, X, H$ were introduced in Section~\ref{subsec:alphastable}.

We first define a real-valued process $\tilde{\xi}$ via a change of measure from the L\'evy process $\xi$.  To this end, we observe first that $\left(\exp\left(-\int_0^t s \mathrm{d} \xi_s - \frac{t^{\alpha+1}}{(\alpha+1)}\right), t \ge 0\right)$ is a martingale. Now for each $t \ge 0$ and any suitable test-function $f: \mathbb{D}([0,t], \R) \to \R$, define $\tilde{\xi}$ by
\[
\E{f(\tilde{\xi}_s, 0 \le s \le t)} = \E{\exp \left(-\int_0^t s \mathrm{d} \xi_s - \frac{t^{\alpha+1}}{(\alpha+1)} \right) f(\xi_s, 0 \le s \le t)}.
\]
Superimpose a Poisson point process of rate $A_{\alpha}^{-1}$ (as defined in (\ref{A_alpha})) in the region $\{(t,y) \in \R_+ \times \R_+: y \le \tilde \xi_t - \inf_{0 \le s \le t} \tilde \xi_s\}$.  Then the limiting components $\mathrm{C}_i,i\geq 1$ are encoded by the excursions of the reflected process $(\tilde \xi_t - \inf_{0 \le s \le t} \tilde \xi_s, t \ge 0)$ above 0 and the Poisson points falling under each such excursion.  The total masses of the measures $\mu_{C_1}(C_1), \mu_{C_2}(C_2), \ldots$ are given by the lengths of the excursions of $\tilde{\xi}$ above its running infimum.  The surpluses $s(\mathrm{C}_1), s(\mathrm{C}_2), \ldots$ are given by the the number of Poisson points falling under corresponding excursions.
Then, the limiting components $(\mathrm{C}_1, \mathrm{C}_2, \ldots)$ are conditionally independent given the sequences $(\mu_{C_1}(C_1), \mu_{C_2}(C_2), \ldots)$ and $(s(\mathrm{C}_1), s(\mathrm{C}_2), \ldots)$, with
\[
\big(C_i, d_{C_i}, \mu_{C_i} \big)\equidist \big(\mathcal{G}^{s(\mathrm{C}_i)}, \mu_{C_i}(C_i)^{1-1/\alpha} \cdot d^{s(\mathrm{C}_i)}, \mu_{C_i}(C_i) \cdot \mu^{s(\mathrm{C}_i)}\big).
\]

\textbf{Construction of the connected $\alpha$-stable graph with surplus $s$.}
For $s \ge 0$, it remains to describe the connected stable graph, $(\mathcal{G}^s, d^s, \mu^s)$ with surplus $s$.  Just as the stable tree is encoded by a normalised excursion of $\xi$, the space $\mathcal{G}^s$ has a spanning tree which is encoded by a normalised excursion of $\tilde{\xi}$ conditioned to contain $s$ Poisson points.  This turns out to be distributed as follows.  First sample excursions $X^s$ and $H^s$ with joint law specified by
\[
\E{f(X^s(t), H^s(t), 0 \le t \le 1)} = \frac{\E{\left(\int_0^{1} X(u) \mathrm{d}u \right)^s f(X(t), H(t), 0 \le t \le 1)}}{\E{\left(\int_0^{1} X(u) \mathrm{d}u \right)^s}}.
\]
Let $\mathcal{T}^s$ be the $\R$-tree encoded by $H^s$ and let $\pi^s: [0,1] \to \mathcal{T}^s$ be its canonical projection.  If $s = 0$, then $X^s$ is a standard stable excursion and $H^s$ is its corresponding height process i.e.\ $\mathcal{T}^0 \equidist \mathcal{T}$. In this case, we simply set $\mathcal{G}^0 = \mathcal{T}^0$.  If, on the other hand, $s \ge 1$, conditionally on $X^s$ and $H^s$, sample conditionally independent points $V^s_1, V^s_2, \ldots, V^s_s$ from $[0,1]$, each having density
\[
\frac{X^s(u)}{\int_0^1 X^s(t) \mathrm{d}t}, \quad u \in [0,1].
\]
Then, for $1 \le k \le s$, let $Y^s_{k}$ be uniformly distributed on the interval $\intervalleff{0}{X^s(V^s_k)}$, independently for all $k$, and let $B^s_{k} = \inf \{t \ge V^s_{k}: X^s(t) = Y^s_{k} \}$.  
We obtain $\mathcal{G}^s$ from $\mathcal{T}^s$ by identifying the pairs of points $(\pi^s(V^s_k), \pi^s(B^s_k))$ for $1 \le k \le s$.  (This is achieved formally by a further  straightforward quotienting operation which we do not detail here.) 

In fact, using the notation of Section~\ref{subsec:alphastable} for the tree $\cT^s$ (which is absolutely continuous with respect to $\cT$), this last operation corresponds to identifying the leaf $\pi^s(V^s_k)$ with a branchpoint on its ancestral line $\oseg{\rho}{\pi^s(V^s_k)}$, independently for $1\leq k \leq s$. As a consequence of the discussion in Section~\ref{sec:stable}, the point $\pi^s(B_k)$ is such that
\[
\pi^s(B^s_k)=\pi^s(A^s_k), \quad \text{where} \quad A^s_{k} = \sup \Big\{t \le V^s_{k}: X^s(t) \leq Y_k^s \Big\}.
\]
Along with equation \eqref{eq:local time on the right of the path from the root}, this ensures that each branchpoint $b\in \oseg{\rho}{\pi^s(V^s_k)}$ is chosen with probability equal to 
\begin{align*}
\frac{N^{\mathrm{right}}(b,\pi^s(V^s_k))}{N^{\mathrm{right}}(\oseg{\rho}{\pi^s(V^s_k)})}= \frac{N^{\mathrm{right}}(b,\pi^s(V^s_k))}{X(V^s_k)},
\end{align*}
as claimed in the introduction. We view $\cG^s$ as a measured metric space by endowing it with $\mu^s$, the image of the Lebesgue measure on $[0,1]$ by the projection $\pi^s$.  

\textbf{Continuous and discrete marginals.}
Recall the definition for any $n\geq 0$ of the continuous marginals $\cG_n^s$ from the introduction: $\cG_n^s$ is  the union of the kernel $\mathcal{K}^s$ and the paths from $n$ leaves to the root, where the leaves are taken i.i.d\ under the measure carried by $\cG^s$. Indeed, the kernel is the image of the subtree of $\cT^s$ spanned by the $s$ selected leaves after the gluing procedure.

Let $(U_i)_{i\geq 1}$ be a sequence of i.i.d.\ $\mathrm{U}[0,1]$ random variables independent of $X^s$, and let $n\geq 0$. In the construction described above, let $\cT_{s,n}^{s}$ be the ordered subtree of $\cT^s$ spanned by the root and the leaves corresponding to the real numbers $V^s_1, \dots , V_s^s,U_1,\dots, U_n$, and $\cT_{s,n}^{s,\mathrm{ord}}$ its ordered version. Since $\pi^s(U_1),\dots, \pi^s(U_n)$ are (by definition) distributed according to the probability measure carried by $\cG^s$, the image of $\cT_{s,n}^s$ after the gluing procedure is a version of the continuous marginal $\cG_n^s$ (and the discrete marginal $\sG_n^s$ is then the combinatorial shape of the continuous marginal $\cG_n^s$). 

For future purposes, we also define $\sT_{s,n}^{s,\mathrm{ord}}$ the discrete counterpart of $\cT_{s,n}^{s,\mathrm{ord}}$. By convention, we consider that the $s$ first leaves are unlabelled and the $n$ leaves corresponding to $U_1, \dots , U_n$ inherit the label of their uniform variable.

\textbf{Unbiasing.}
Let $(X;V_1,V_2,\dots, V_s,Y_1,\dots,Y_s)$ be the unbiased excursion endowed with
\begin{itemize} 
\item $V_1,\dots, V_s$ i.i.d.\ $\mathrm{U}[0,1]$ random variables
\item $Y_1,\dots, Y_s$ which are conditionally independent given $(X;V_1,V_2,\dots V_s)$, with $Y_k \sim \mathrm{U}[0,X(V_k)]$.
\end{itemize}
We call $(X;V_1,V_2,\dots V_s,Y_1,\dots,Y_s)$ the \emph{unbiased counterpart} of $(X^s;V^s_1,\dots,V^s_s,Y^s_1,\dots Y^s_s)$. Any random object defined as a measurable function $f(X^s;(V^s_k)_{1\leq k \leq s},(Y^s_k)_{1\leq k\leq s},(U_i)_{i\geq 1})$ then also has an unbiased counterpart, $f(X;(V_k)_{1\leq k \leq s},(Y_k)_{1\leq k\leq s},(U_i)_{i\geq 1})$ and vice versa. 
Using the fact that, conditionally on $(X;V_1,V_2,\dots V_s)$, the random variables $Y_1,\dots,Y_s$ have the same distribution as $Y^s_1,\dots,Y^s_s$ conditionally on $(X^s;V^s_1,V^s_2,\dots V^s_s)$, we observe that 
\begin{align}\label{eq:change of measure}
& \E{f(X^s;(V^s_k)_{1\leq k \leq s},(Y^s_k)_{1\leq k\leq s},(U_i)_{i\geq 1})}  \notag \\
& = \frac{\E{\int_{\intervalleff{0}{1}^s}\mathrm{d}v_1\ldots \mathrm{d}v_s  \frac{X(v_1)\ldots X(v_s)}{\left( \int_0^1 X(t) \mathrm{d}t \right)^s} \int_{0}^{X(v_1)}\frac{\mathrm{d}y_1}{X(v_1)}\ldots\int_{0}^{X(v_s)}\frac{\mathrm{d}y_s}{X(v_s)} f(X; (v_k),(y_k),(U_i)) \left( \int_0^1 X(t) \mathrm{d}t \right)^s}}{ \E{\left( \int_0^1 X(t) \mathrm{d}t \right)^s} } \notag \\
& =  \frac{\E{f(X; (V_k)_{1\leq k \leq s}, (Y_k)_{1\leq k\leq s},(U_i)_{i\geq 1}) X(V_1)X(V_2)\ldots X(V_s)}}{ \E{X(V_1) X(V_2) \ldots X(V_s)}}. 
\end{align}
In particular, this allows us to compute quantities in the unbiased setting in order to understand the biased one. We define $\widehat{\mathcal{G}}^s$ to be the unbiased counterpart of $\cG^s$ and $\widehat{\mathcal{G}}^s_n$ to be the unbiased counterpart of $\mathcal{G}^s_n$ and $\widehat{\sG}^s_n$ to be the unbiased counterpart of $\sG_n^s$.  Similarly, $\widehat{\cT}_{s,n}^{s,\mathrm{ord}}$ is the unbiased counterpart of $\cT_{s,n}^{s,\mathrm{ord}}$ which, modulo the labelling of the leaves, has the same distribution as $\cT_{s+n}^{\mathrm{ord}}$.

%%%%%%%%%%%%%%%%%%%%%%%%%%%%%%%%%%

\section{Distribution of the marginals $\mathsf{G}_{n}^s$}
\label{sec:marginals}

%%%%%%%%%%%%%%%%%%%%%%%%%%%%%%%%%%

Let $s \geq 0$. The goal of this section is to identify the joint distribution of the marginals $\mathsf{G}^s_{n}$, for $n \geq 0$ (and for $n \ge -1$ if $s \ge 2$). By definition, for any $n \geq 0$, the random graph $\mathsf{G}^s_{n}$ is an element of $\M_{s,n}$, the set of connected multigraphs with surplus $s$, with $n+1$ labelled leaves, unlabelled internal vertices and no vertex of degree $2$. 
To perform our calculations, it will be convenient to consider versions of this multigraph with some additional structure, namely cyclic orderings of the half-edges around each vertex. 
We denote by $\M_{s,n}^{\mathrm{ord}}$ the set of such graphs and we emphasise here that the orderings around different vertices need not be compatible with one another: the elements of $\M^{\mathrm{ord}}_{s,n}$ are not necessarily planar. The advantage is that this additional structure breaks the symmetries present in elements of $\M_{s,n}$. (For $n=-1$ the cyclic ordering is insufficient to break all the symmetries and we will rather label the internal vertices.)

We will begin in Section~\ref{sec:cycling} by computing the number of possible cyclic orderings of the half-edges around the different vertices of a graph $G\in \M_{s,n}$. Then, in Section~\ref{sec:bijectiontreegluing}, we will describe the elements of $\M_{s,n}^{\mathrm{ord}}$ as ordered trees with $n$ labelled and $s$ unlabelled leaves together with a ``gluing plan'', that specifies how to glue each unlabelled leaf ``to the right" of the ancestral path of that leaf. This description corresponds to the one we have for $\mathsf{G}^s_{n}$, and we compute in Section~\ref{sec:calculations} the distribution of the tree and the corresponding gluing plan, which then yields the distribution of $\mathsf{G}^s_{n}$ claimed in Theorem \ref{th:distr_marginals}. In Section~\ref{sec:Marchal}, we show that the sequence $\left(\mathsf{G}^s_{n}\right)_{n \ge 0}$ evolves according to Marchal's algorithm (Theorem \ref{thm:MarchalAlg}). In Section~\ref{sec:unrooted}, we extend this to $(\mathsf{G}_{n}^s)_{n \ge -1}$ for $s \ge 2$.  Finally, Section~\ref{sec:configembedded} is devoted to the proof of Corollary~\ref{cor:identification}, which identifies the distribution of $\mathsf{G}^s_{n}$ with that of a specific configuration model with i.i.d.\ random degrees.

We recall the following notation from the introduction. For each $G=(V(G),E(G))\in \M_{s,n}$, we denote $I(G)\subset V(G)$ the set of internal vertices of $G$ (vertices of degree $3$ or more), $\deg(v) = \deg_G(v)$ the degree of a vertex $v \in V(G)$, $\mathrm{sl}(G)$ the number of self-loops, $\mathrm{mult}(e)$ the multiplicity of the element $e \in \supp(E)$ and $\mathrm{Sym}(G)$ the set of permutations of vertices of $G$ that are the identity on the leaves and that preserve the adjacency relations (with multiplicity).

\subsection{Cyclic orderings of half-edges}\label{sec:structure multigraph}
\label{sec:cycling}

Let $n\geq 0$. In this section we compute the number of possible cyclic orderings of the half-edges around each vertex of $G$, for each $G\in \M_{s,n}$ (we emphasise that Lemma \ref{lem:co} is false when $n=-1$ and $s\geq 2$). Let $\psi:\M_{s,n}^{\mathrm{ord}}\rightarrow \M_{s,n}$ be the map that forgets the cyclic ordering around the vertices.

\begin{lem}
\label{lem:co}
For each $G \in \M_{s,n}$,
\begin{equation*}
\label{eq:nb of cyclic orderings} \left|\psi^{-1}(G) \right|=\frac{\prod_{v\in I(G)}(\deg(v) -1)!}{\abs{\mathrm{Sym}(G)} 2^{\mathrm{sl}(G)} \prod_{e\in \supp(E(G))}\mathrm{mult}(e)!}.
\end{equation*}
\end{lem}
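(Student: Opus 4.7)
I will prove this by a direct orbit-counting argument. The elements of $\psi^{-1}(G)$ are naturally identified with the orbits, under the full multigraph-automorphism group of $G$, of the set of cyclic orderings on the half-edges around the internal vertices of $G$. The formula then comes from the orbit-stabiliser theorem once the action is shown to be free.

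First I fix an arbitrary concrete realisation of $G$ in which internal vertices are labelled $v_1,\ldots,v_k$ and the $\deg(v)$ half-edges around each internal vertex $v$ are made distinguishable. The set $\mathcal{X}$ of cyclic-ordering decorations is then a product of local choices, one cyclic ordering of $\deg(v)$ distinct half-edges per vertex, so that
\[
|\mathcal{X}| \;=\; \prod_{v\in I(G)}(\deg(v)-1)!.
\]

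Next I identify the group $\Gamma$ acting on $\mathcal{X}$ whose orbits are the elements of $\psi^{-1}(G)$. Concretely, $\Gamma$ consists of all automorphisms of the multigraph $G$ that fix each labelled leaf. Any such automorphism decomposes as a vertex permutation together with a compatible half-edge relabelling; the vertex part contributes $\mathrm{Sym}(G)$, while the half-edge part can, independently of the vertex part, permute the $\mathrm{mult}(e)$ parallel copies of each multi-edge $e \in \supp(E(G))$ and swap the two half-edges of each self-loop. Since these three ingredients combine freely,
\[
|\Gamma| \;=\; |\mathrm{Sym}(G)|\cdot 2^{\mathrm{sl}(G)}\cdot \prod_{e\in \supp(E)}\mathrm{mult}(e)!.
\]

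The technical heart of the proof is to show that $\Gamma$ acts freely on $\mathcal{X}$. If $\gamma=(\sigma,\pi)\in\Gamma$ stabilises a decoration, then for every internal vertex $v$ the bijection $\pi$ from the half-edges at $\sigma^{-1}(v)$ to those at $v$ must carry one cyclic ordering to the other verbatim, i.e.\ act as a cyclic rotation of a set of pairwise distinct half-edges. I plan to argue by propagation through the connected graph: the hypothesis $n\ge 0$ guarantees the existence of at least one labelled leaf, whose unique half-edge provides a distinguished ``anchor'' at the adjacent internal vertex, so the only rotation consistent with the leaf labelling there is trivial. The compatibility of $\pi$ with the edge-pairing then transfers this triviality to the cyclic orderings at the neighbouring vertices, and so on throughout $G$, forcing $\pi=\mathrm{id}$ and $\sigma=\mathrm{id}$. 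Applying the orbit-stabiliser theorem to this free action yields $|\psi^{-1}(G)|=|\mathcal{X}|/|\Gamma|$, which is exactly the announced formula.

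The main obstacle I anticipate is a clean formulation and verification of the freeness step, especially at vertices incident to self-loops or to several parallel multi-edges, where naive cyclic arguments have to be supplemented by the edge-pairing constraints. It is precisely the possibility that this propagation argument can fail in the absence of a distinguished half-edge attached to a labelled leaf that accounts for the exclusion of the case $n=-1$, $s\ge 2$ noted just before the lemma.
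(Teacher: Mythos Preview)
Your proof is correct and is essentially the paper's argument recast in group-theoretic language: where the paper labels internal vertices, numbers parallel edges, orients self-loops, counts the $\prod_v(\deg(v)-1)!$ cyclic orderings, and then checks that each forgetting map is exactly $k$-to-$1$, you are computing $|\mathcal X|/|\Gamma|$ via orbit--stabiliser for the same group acting on the same set. Your propagation-from-a-leaf argument for freeness is a more explicit justification of the fact the paper states in one line (``with the cyclic ordering around the vertices every vertex is distinguishable''), and it correctly identifies why the lemma fails for $n=-1$.
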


\begin{proof}
It is convenient to consider versions of $G$ with labelled internal vertices. The number of possible labellings is
\begin{equation} \label{eqn:symmetries}
\frac{\abs{I(G)}!}{\abs{\mathrm{Sym}(G)}}.
\end{equation}
Indeed, let $\tilde{G}$ denote an arbitrarily labelled version of $G$. The symmetric group $\mathfrak{S}_{\abs{I(G)}}$ acts on the set of multigraphs with $\abs{I(G)}$ internal labels by permuting those labels. The number of labellings we seek is thus the number of elements of the orbit of $\tilde{G}$ under this action. This is just $\abs{I(G)}!$ divided by the cardinality of the stabilizer of $\tilde{G}$. Any permutation $\sigma\in \mathfrak{S}_{\abs{I(G)}}$ that fixes $\tilde{G}$ corresponds to a permutation $\tau\in\mathrm{Sym}(G)$, and (\ref{eqn:symmetries}) follows.

Now, to compute  $\left|\psi^{-1}(G) \right|$,  we first label everything then forget the labels we do not need.
\begin{itemize}
	\item Consider version of $G$ with labelled internal vertices: from the preceding paragraph, there are $\frac{\abs{I(G)}!}{\abs{\mathrm{Sym}(G)}}$ possible labellings.
	\item For each $e=\{u,v\}\in \supp(E(G))$, in order to distinguish between the $\mathrm{mult}(e)$ edges joining $u$ and $v$, number them from $1$ to $\mathrm{mult}(e)$.
	\item Give every self-loop an orientation.
	\item Endow the multigraph with a cyclic ordering around each vertex. For each $v\in I(G)$ we have $(\deg(v) -1)!$ possibilities for an ordering of the half edges adjacent to $v$. (The half-edges are distinguishable because the self-loops are oriented.)
	\item Forget the orientation on the self-loops. This transformation is $2^{\mathrm{sl}(G)}$-to-$1$ since with the ordering around the vertices, every orientation is distinguishable.
	\item Forget the labelling of the edges. This transformation is $\left(\prod_{e\in \supp(E(G))} \mathrm{mult}(e)!\right)$-to-$1$.
	\item Forget the labelling of the internal vertices. With the cyclic ordering around the vertices every vertex is distinguishable, and so this map is $\abs{I(G)}!$-to-$1$.
\end{itemize}
(We emphasise here the importance of the fact that our multigraphs are planted in distinguishing edges and vertices.)
We obtain a multigraph in $\M_{s,n}^{\mathrm{ord}}$ whose image by $\psi$ is $G$. By the previous considerations, the number of such multigraphs is indeed given by the claimed formula.
\end{proof}

\subsection{Ordered multigraphs and the depth-first tree}
\label{sec:bijectiontreegluing}

We still consider integers $n\geq 0$.

\textbf{Ordered trees with paired leaves.}
Let $\A_{s,n}$ be the set of planted ordered trees with no vertices of degree $2$ that have $s$ unlabelled leaves and $n$ labelled leaves, with labels from $1$ to $n$. Let $\A_{s,n}^{\mathrm{pair}}$ be the set of ordered trees with no vertices of degree 2 that have $n$ labelled uncoloured leaves, $s$ red leaves labelled $1$ to $s$ in clockwise order from the root, and $s$ blue leaves also labelled from $1$ to $s$.  We think of the red and blue leaves labelled $i$ as forming a pair, and impose the condition that the blue leaf labelled $i$ must lie to the right of the ancestral line of the red leaf labelled $i$, for $1 \le i \le s$.

We first describe how every ordered multigraph $G\in \M_{s,n}^{\mathrm{ord}}$ is equivalent to an element of $\A_{s,n}^{\mathrm{pair}}$.
We define two natural maps on $\A_{s,n}^{\mathrm{pair}}$. Let
\[\textsc{Glue}:\A_{s,n}^{\mathrm{pair}}\rightarrow\M_{s,n}^{\mathrm{ord}}\]
be the map that, for each red leaf $i$ identifies $i$ with its blue pair and then contracts the resulting path containing a vertex of degree 2 into a single edge.
Let \[\textsc{Erase}:\A_{s,n}^{\mathrm{pair}} \rightarrow \mathrm \A_{s,n}\] be the map that erases the blue leaves and their adjacent edges, then contracts any path of degree 2 vertices into a single edge, and finally forgets the labelling and colour of the red leaves.

\begin{figure}
	\centering
	\includegraphics[height=5cm]{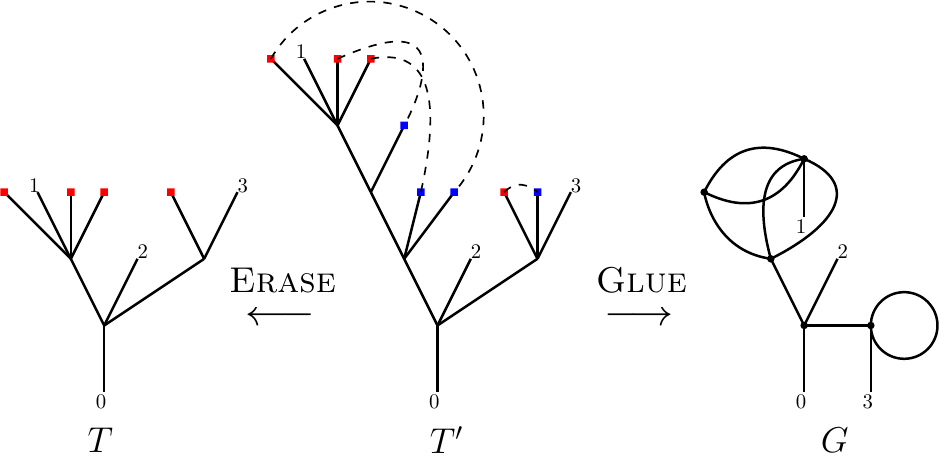}
	\caption{The operations \textsc{Glue} and \textsc{Erase} applied to a tree $T'$. Here, $T'$ is the depth-first tree of $G$, and $T$ is the base tree.}
	\label{fig:test}
\end{figure}

\textbf{Reverse construction: the depth-first tree.} 
Let $G\in \M_{s,n}^{\mathrm{ord}}$. We imagine that each edge of $G$ is made up of two half-edges, one attached to each end-point. We say that two half-edges are \emph{adjacent} if they have a common end-point. We describe a procedure that explores all the half-edges of the graph in a deterministic manner and disconnects exactly $s$ edges in order to transform $G$ into a tree. At each step $i$ of the algorithm, we will have an ordered stack of \emph{active} half-edges $A_i$ and a current surplus $s_i$. We write $h_0$ for the unique half-edge connected to the leaf with label $0$.
\newpage
\begin{steplist}
	\item[{\sc Initialization}] $A_0=(h_0)$, $s_0 = 0$.
	\item[{\sc Step $i$}\hspace{38pt}]$(0\leq i\leq |E(G)|-1)$: Let $h_{i}$ be the half-edge at the top of the stack $A_i$. Let $\hat{h}_i$ be the half-edge to which it is attached. If $\hat{h}_i\notin A_i$, remove $h_i$ from the stack and put the half-edges adjacent to $\hat{h}_i$ on the top of the stack, in clockwise order top to bottom. If $\hat{h}_i\in A_i$, first increment $s_i$, then remove both $h_i$ and $\hat{h}_i$ from the stack, disconnect them, attach a red leaf labelled $s_i$ to $h_i$ and attach a blue leaf labelled $s_i$ to $\hat{h}_i$.
\end{steplist} 
It is straightforward to check that this algorithm produces a tree in $\A_{s,n}^{\mathrm{pair}}$, which we call the \emph{depth-first tree}, and denote by $\textsc{Dep}(G)$. (Note that this is a variant of the notion of depth-first tree introduced in \cite{ABBrGo12}.) We have $\textsc{Dep}(G)=G$ if and only if $G$ is a tree i.e.\ $s=0$.  The following lemma is then straightforward.

\begin{lem}
	The maps $\textsc{Glue}:\A_{s,n}^{\mathrm{pair}}\rightarrow\M_{s,n}^{\mathrm{ord}}$ and $\textsc{Dep}:\M_{s,n}^{\mathrm{ord}}\rightarrow\A_{s,n}^{\mathrm{pair}}$ are reciprocal bijections.
\end{lem}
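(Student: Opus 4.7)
The plan is to verify the two compositions $\textsc{Glue}\circ\textsc{Dep}$ and $\textsc{Dep}\circ\textsc{Glue}$ are identity maps on $\M_{s,n}^{\mathrm{ord}}$ and $\A_{s,n}^{\mathrm{pair}}$ respectively. Before starting, I would observe that both maps preserve the relevant counting statistics: the number of edges removed by $\textsc{Dep}$ equals the final value of the counter $s_i$, which is exactly the surplus of $G$; symmetrically $\textsc{Glue}$ identifies $s$ pairs of leaves, removing $s$ leaves and merging $s$ pairs of edges, so that the resulting surplus is $s$. The leaf labels in $\seg{1}{n}$ are transported unchanged by both maps, and cyclic orderings at internal vertices match on both sides by construction.

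For the identity $\textsc{Glue}\circ\textsc{Dep}=\mathrm{id}_{\M_{s,n}^{\mathrm{ord}}}$, I would argue that the depth-first algorithm disconnects an edge only when its two half-edges are both in the active stack, at which point it attaches a red leaf to one half-edge and a blue leaf to its partner, with matching labels in the order of disconnection. The map $\textsc{Glue}$ then re-identifies exactly the same pairs of labelled red/blue leaves and contracts the resulting degree-$2$ vertices, which precisely reverses the disconnection step. The remaining edges of $\textsc{Dep}(G)$ are carried over from $G$ unchanged, and the cyclic orderings at each internal vertex are those inherited from $G$ (since the algorithm never re-orders half-edges around a vertex).

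The harder direction is $\textsc{Dep}\circ\textsc{Glue}=\mathrm{id}_{\A_{s,n}^{\mathrm{pair}}}$. Given $T\in\A_{s,n}^{\mathrm{pair}}$, let $G=\textsc{Glue}(T)$; I need to show that when the depth-first exploration is run on $G$, the $i$-th edge it disconnects is exactly the one coming from gluing the red leaf labelled $i$ to the blue leaf labelled $i$ in $T$, and that the correct half-edges receive the red/blue decoration. The main observation is that the depth-first exploration of $G$, pushing adjacent half-edges onto the stack in clockwise order, induces exactly the clockwise traversal of $T$ (interrupted when cycle edges are detected). Consequently, the red leaves of $T$ (which, recall, are labelled $1,\dots,s$ in clockwise order from the root) are visited in the order of their labels, which matches the labels that $\textsc{Dep}$ assigns at successive disconnections. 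The constraint that blue leaf $i$ lies to the right of the ancestral line of red leaf $i$ is precisely what guarantees that, at the moment the algorithm reaches the half-edge carrying red leaf $i$, its partner half-edge (carrying blue leaf $i$) is already present in the active stack: being to the right of the ancestral path means it was pushed onto the stack when the exploration was still on the common ancestor's side and has not yet been popped. Hence the cycle-closing condition $\hat h_i\in A_i$ triggers, leading to the correct pairing.

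The main obstacle is precisely this last verification — translating the planar condition ``blue to the right of the ancestral line of red'' into the stack-membership statement ``$\hat h_i\in A_i$ at the exact moment when $h_i$ is on top of $A_i$.'' I would handle it by a short induction on $i$: assuming the first $i-1$ disconnections have correctly recovered the first $i-1$ red/blue pairs of $T$, the depth-first exploration of $G$ between the $(i-1)$-th and $i$-th disconnections coincides with the clockwise exploration of the portion of $T$ between the corresponding leaves, and the right-of-ancestral-line hypothesis then forces the partner half-edge to be active exactly when the red half-edge surfaces. Once this is established, the equality $\textsc{Dep}(\textsc{Glue}(T))=T$ follows, completing the proof.
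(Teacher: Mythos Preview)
Your proof plan is correct and in fact considerably more detailed than the paper's own treatment: the paper simply declares the lemma ``straightforward'' and gives no argument. Your two-composition strategy and the key observation---that the depth-first exploration of $\textsc{Glue}(T)$ follows the clockwise contour of $T$ (interrupted only at cycle-closing edges), so that the ``blue to the right of the ancestral line'' condition is exactly the stack-membership condition $\hat h_i\in A_i$---are precisely what one would write out if forced to justify the claim. One minor imprecision: you speak of the blue leaf's half-edge being pushed ``on the common ancestor's side,'' but in fact the attachment vertex of blue leaf $i$ lies \emph{on} the ancestral line of red leaf $i$ (not merely on a shared prefix), since the definition of $\A_{s,n}^{\mathrm{pair}}$ requires blue leaf $i$ to hang immediately off that line; this only strengthens your argument.
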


For a multigraph $G$, call $\textsc{Erase}(\textsc{Dep}(G))$ the \emph{base tree}.

\textbf{Gluing plans.}
Consider $T\in \A_{s,n}$. We now aim to describe the set $\textsc{Erase}^{-1}\left(\{T\}\right)$.  This is the set of possible depth-first trees $T'$ obtainable from a fixed base tree $T$. As usual, we write $I(T)$ for the internal vertices of $T$ and $E(T)$ for its edges. A vertex $v \in I(T)$ of degree $d=\mathrm{deg}_T(v)$ possesses $d$ \emph{corners}, which we call $c_{v,1}, \ldots, c_{v,d}$ in clockwise order from the root.  We write $C(T)$ for the set of corners of $T$. The \emph{ancestral path} of a vertex is its unique path to the root. For the $k$th unlabelled leaf of $T$ in clockwise order, let $\mathcal{A}(k)$ be the set of edges and corners that lie immediately to the right of its ancestral path, for $1 \le k \le s$.

Now let $T'\in \textsc{Erase}^{-1}\left(\{T\}\right)$.  The internal vertices of $T$ each have a counterpart in $T'$, for which we use the same name.  The red leaves of $T'$ correspond to the unlabelled leaves of $T$.  A blue leaf is attached by its incident edge either into one of the corners of an internal vertex of $T$, or to an internal vertex of $T'$ which disappears when the blue leaves are removed and paths of internal vertices of degree 2 are contracted into a single edge.  For each $e \in E(T)$ let $a_e$ be the number of additional vertices along the path in $T'$ which get contracted to yield the edge $e$ by $\textsc{Erase}$.  If $a_e \neq 0$, we will list these additional vertices as $v_{e, 1}, \ldots, v_{e,a_e}$ in decreasing order of distance from the root.

For each $v \in I(T)$, let $S_{v, \ell}$ be the set of labels of blue leaves attached to corner $c_{v, \ell}$, for $1 \le \ell \le \mathrm{deg}_T(v)$.  (Any or all of these sets may be empty; in particular, $S_{v,1}$ is always empty because a blue leaf must lie to the right of the ancestral line of the corresponding red leaf.)  If $S_{v,\ell}$ is non-empty, let $\sigma_{v,\ell}$ be the permutation of its elements which gives the clockwise ordering of the blue leaves in corner $c_{v,\ell}$; if it is empty, let $\sigma_{v,\ell}$ be the unique permutation of the empty set. For each $e \in E(T)$ such that $a_e \neq 0$, we let $S_{e,i}$ be the set of labels of blue leaves attached to vertex $v_{e,i}$ in $T'$, for $1 \le i \le a_e$.  These sets can not be empty.  Let $\sigma_{e,i}$ be the permutation of the elements of $S_{e,i}$ giving the clockwise ordering of the blue leaves attached to $v_{e,i}$ (note that these are necessarily attached to the right of $e$).  Observe that the collection of sets
\[
\left\{ S_{v,\ell} : v \in I(T), 1 \le \ell \le \mathrm{deg}_T(v), S_{v,\ell} \neq \emptyset\right\} \cup \left\{ S_{e,i}: e \in E(T), 1 \le i \le a_e\right\}
\]
partitions $\{1,2,\ldots,s\}$.  This induces a \emph{gluing function} $g: \{1,2,\ldots,s\} \to (I(T) \cup E(T)) \times \N$ as follows.  For $1 \le k \le s$, if $k \in S_{v,\ell}$ set $g(k) = (v,\ell)$; if $k \in S_{e,i}$  set $g(k) = (e,i)$.

See Figure \ref{fig:partition} for an illustration. 
This leads us to the formal definition of a gluing plan.

\begin{defn}\label{def:gluing plan}
	We say that $\Delta= \left( \left((S_{v,\ell}, \sigma_{v,\ell})_{1 \le \ell \le \mathrm{deg}_T(v)} \right)_{v \in I(T)}, \left( (S_{e,i},\sigma_{e,i})_{1\le i \le a_e} \right)_{e\in E(T)} \right)$ is a \emph{gluing plan for $T$} if the following properties are satisfied.
	\begin{enumerate}
	\item For all $v \in I(T)$ and all $1 \le \ell \le \mathrm{deg}_T(v)$, we have $S_{v,\ell} \subseteq \{1,2,\ldots,s\}$ and $\sigma_{v,\ell}$ is a permutation of $S_{v,\ell}$.
	\item For all $e\in E$ and all $1 \le i \le a_e$, the set $S_{e,i} \subseteq \{1,2,\ldots,s\}$ is non-empty and $\sigma_{e,i}$ is a permutation of $S_{e,i}$.
	\item The sets $\left\{ S_{v,\ell} : v \in I(T), 1 \le \ell \le \mathrm{deg}_T(v), S_{v,i} \neq \emptyset\right\}$ and $\left\{ S_{e,i}: e \in E(T), 1 \le i \le a_e\right\}$ partition \linebreak $\{1,2,\ldots,s\}$.
	\item The induced gluing function $g: \{1,2, \ldots,s\} \rightarrow (I(T)\cup E(T)) \times \N$ is such that if $g(k) = (v,\ell)$ then $c_{v,\ell} \in\cA(k)$ and if $g(k) = (e,i)$ then $e \in \cA(k)$, for all $1 \le k \le s$. 
	\end{enumerate}
\end{defn}

It is straightforward to see that we can completely encode a tree $T'\in \textsc{Erase}^{-1}(\{T\})$ by its gluing plan, and that conversely, every gluing plan for $T$ encodes a tree $T'\in\textsc{Erase}^{-1}(\{T\})$.

\begin{lem}
\label{eq:bijection gluing plan}
\begin{equation*}
\M_{s,n}^{\mathrm{ord}}\simeq \A_{s,n}^{\mathrm{pair}}\simeq \enstq{(T,\Delta)}{T\in \A_{s,n} \text{ and $\Delta$ is a gluing plan for $T$}}.
\end{equation*}
\end{lem}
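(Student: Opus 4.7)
The first equivalence $\M_{s,n}^{\mathrm{ord}}\simeq \A_{s,n}^{\mathrm{pair}}$ is precisely the content of the preceding lemma: $\textsc{Glue}$ and $\textsc{Dep}$ are reciprocal bijections. So the main work is to establish the second equivalence, between $\A_{s,n}^{\mathrm{pair}}$ and the set of pairs $(T,\Delta)$ with $T \in \A_{s,n}$ and $\Delta$ a gluing plan for $T$.

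The forward map $\Phi$ is the one already described informally before Definition~\ref{def:gluing plan}: given $T'\in \A_{s,n}^{\mathrm{pair}}$, set $T=\textsc{Erase}(T')\in \A_{s,n}$, and read off the data $\Delta(T')$ by inspecting where each blue leaf sits in $T'$: either in a corner $c_{v,\ell}$ of some internal vertex $v\in I(T)$ (giving the sets $S_{v,\ell}$, ordered clockwise to form $\sigma_{v,\ell}$), or attached to one of the degree-2 vertices $v_{e,i}$ along an edge $e\in E(T)$ of $T$ that gets contracted (giving $S_{e,i}$ and $\sigma_{e,i}$). One then checks that $\Delta(T')$ satisfies (1)--(4) of Definition~\ref{def:gluing plan}: (1)--(3) are immediate, and (4) follows from the defining constraint of $\A_{s,n}^{\mathrm{pair}}$ that each blue leaf lies to the right of its red partner's ancestral line, which translates exactly into $c_{v,\ell}\in\cA(k)$ or $e\in\cA(k)$ depending on where blue leaf $k$ has been attached.

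For the inverse $\Psi$, given $(T,\Delta)$ I would recover $T'$ as follows. For each edge $e\in E(T)$, read off $a_e := \max\{i : S_{e,i}\neq \emptyset\}$ (and $a_e=0$ if there is no such $i$); subdivide $e$ by inserting new vertices $v_{e,1},\ldots,v_{e,a_e}$ in the prescribed order from the root. On each $v_{e,i}$ attach blue leaves with labels $S_{e,i}$ in clockwise order prescribed by $\sigma_{e,i}$, always on the right of the edge (the side specified by $\cA$). On each corner $c_{v,\ell}$ of each internal vertex $v\in I(T)$, attach blue leaves with labels $S_{v,\ell}$ in the clockwise order $\sigma_{v,\ell}$. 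Finally, relabel the $s$ unlabelled leaves of $T$ in red with labels $1,\ldots,s$ in clockwise order from the root. Condition (4) of the gluing plan guarantees that each blue leaf $k$ ends up on the right of the ancestral line of the red leaf $k$, so $\Psi(T,\Delta)\in \A_{s,n}^{\mathrm{pair}}$.

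The two maps are mutually inverse essentially by construction: applying $\textsc{Erase}$ to $\Psi(T,\Delta)$ returns $T$, because erasing blue leaves and contracting paths of degree-2 vertices exactly undoes the subdivisions and attachments; and reading off the gluing plan of $\Psi(T,\Delta)$ returns $\Delta$, because the sets and orderings at corners and subdivision vertices are precisely what we inserted. Conversely, starting from $T'\in \A_{s,n}^{\mathrm{pair}}$, applying $\Phi$ then $\Psi$ rebuilds the same subdivision vertices and the same blue leaves in the same positions, recovering $T'$. The only point requiring attention is that the number $a_e$ recovered from $\Delta$ matches the number of intermediate degree-2 vertices on the path corresponding to $e$ in $T'$; this is forced by condition (2) of Definition~\ref{def:gluing plan} (the sets $S_{e,i}$ for $1\le i\le a_e$ are required to be non-empty), which corresponds to the fact that intermediate vertices on the path only exist if they carry at least one blue leaf -- otherwise they would be of degree 2 and eliminated by $\textsc{Erase}$. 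Composing $\Phi$ with the bijection $\textsc{Dep}$ of the previous lemma then yields the second equivalence claimed.
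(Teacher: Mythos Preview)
Your proposal is correct and follows exactly the approach the paper takes: the paper states just before the lemma that ``it is straightforward to see that we can completely encode a tree $T'\in \textsc{Erase}^{-1}(\{T\})$ by its gluing plan, and that conversely, every gluing plan for $T$ encodes a tree $T'\in\textsc{Erase}^{-1}(\{T\})$,'' and then states the lemma without further argument. You have simply written out the details of this correspondence explicitly, which is a faithful elaboration of what the paper leaves implicit.
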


\begin{figure}
	\centering
	\subfloat[In a corner]{\includegraphics[width=4cm]{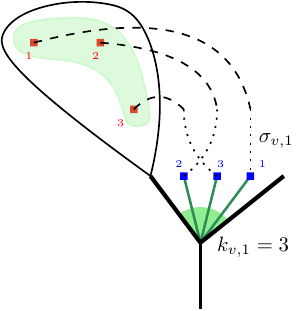}}
	\qquad \subfloat[On an edge]{\includegraphics[width=6cm]{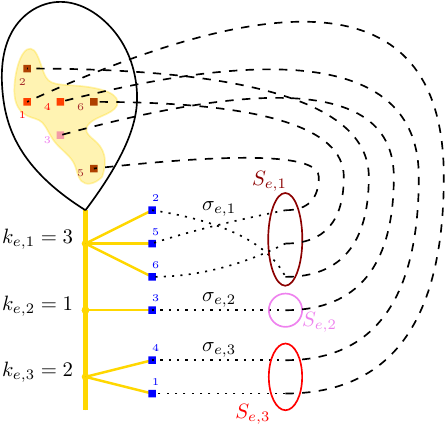}}
	\caption{Definition of a gluing plan}	\label{fig:partition}
\end{figure}

Suppose $T \in \A_{s,n}$ and that
\[
\Delta=\left( \left((S_{v,\ell}, \sigma_{v,\ell})_{1 \le \ell \le \mathrm{deg}_T(v)} \right)_{v \in I(T)}, \left( (S_{e,i},\sigma_{e,i})_{1\le i \le a_e} \right)_{e\in E(T)} \right)
\]
is a gluing plan for the base tree $T$.  We let $k_{v,\ell} = |S_{v,\ell}|$ be the number of blue leaves attached into corner $c_{v,\ell}$ and $k_v = \sum_{\ell=1}^{\mathrm{deg}_T(v)} k_{v,\ell}$ be the total number of blue leaves attached to $v$.  We let $k_{e,i} = |S_{e,i}|$ be the number of blue leaves attached to the $i$th vertex inserted along $e$ and let $k_e = \sum_{i=1}^{a_e} k_{e,i}$ be the total number of blue leaves attached to vertices along $e$.
We call the family of numbers 
\[
\left( (k_v,(k_{v,\ell})_{1\leq \ell \leq \deg_T(v)})_{v\in I(T)}, (k_e,a_e,(k_{e,i})_{1\leq i\leq a_e})_{e\in E(T)} \right)
\]
the \emph{type} of the gluing plan $\Delta$.
\begin{rem}
	Suppose that $G \in \M_{s,n}^{\mathrm{ord}}$ corresponds to $(T,\Delta)$.  The degrees in $G$ depend only on $T$ and the type of the gluing plan $\Delta$. 
	For an internal vertex $v$ of $G$ that was already present in $I(T)$, its degree in $G$ is $\deg_G(v)=\deg_T(v)+k_v$. The internal vertices of $G$ that do not correspond to internal vertices of $T$ are the ones that were created along the edges of $T$ during the gluing procedure. For each $e\in E(T)$, there are $a_e$ newly-created vertices along the edge $e$, having degrees $2+k_{e,1}, 2+k_{e,2}, \dots , 2+k_{e,a_e}$.
\end{rem}

\subsection{The distribution of $\sG^s_{n}$}
\label{sec:calculations}

The goal of this section is to prove Theorem~\ref{th:distr_marginals}~for $n\geq 0$, which states that for every connected multigraph $G\in \mathbb M_{s,n}$, 
\[\Pp{\mathsf{G}_{n}^s=G}\propto\frac{\prod_{v\in I(G)} w_{\deg(v)-1}}{\abs{\mathrm{Sym}(G)}2^{\mathrm{sl}(G)}\prod_{e\in \supp(E(G))}\mathrm{mult}(e)!},\]
where the weights $(w_k)_{k\geq 0}$ are defined in \eqref{Marchal's weights}.

Recall the construction of the random \R-graph $\cG^{s}$ using a tilted excursion and biased chosen points $(X^s;V_1^s,\dots,V_s^s)$ from Section \ref{sec:SLconfiguration}. Recall also the definitions of $\cT_{s,n}^{s,\mathrm{ord}}$ (and its discrete version $\sT_{s,n}^{s,\mathrm{ord}}$) and $\cG_{s,n}^s$ (and its discrete version $\sG_{n}^{s}$), using an additional sequence of i.i.d.\ uniform random variables $(U_i)_{i\geq 1}$.   
In order to apply the results of the previous section, we want to work with ordered versions of our graphs.  In particular, we will get an ordered version $\sG_n^{s,\mathrm{ord}}$ of $\sG_n^s$ by applying a gluing plan to the base tree $\sT_{s,n}^{s,\mathrm{ord}}$. The change of measure \eqref{eq:change of measure} allows us to make calculations using the unbiased excursion with uniform points $(X;V_1,\dots,V_s, U_1, \ldots, U_n)$. So we will define and work instead with an unbiased version $\widehat{\mathsf{G}}_{n}^{s,\mathrm{ord}}$, derived from the unbiased version $\widehat{\mathsf{T}}_{s,n}^{s,\mathrm{ord}}$ of $\mathsf{T}_{s,n}^{s,\mathrm{ord}}$.

\textbf{Construction of $\widehat{\sG}_n^{s,\mathrm{ord}}$.}
We define $\widehat{\sG}_n^{s,\mathrm{ord}}$ via a random gluing plan $\Delta$ for $\widehat{\sT}_{s,n}^{s,\mathrm{ord}}$. 
Conditionally on $\widehat{\sT}^{s,\mathrm{ord}}_{s,n}=T\in \mathbb{T}_{s,n}$, let
\begin{align*}
W(T)&:=\{(v,\ell): v\in I(T),\ 1 \leq \ell \leq \deg_T(v)\} \cup \{(e,j) : e\in E(T),\ j \geq 1\} \subset(I(T) \cup E(T)) \times\N.
\end{align*}
This indexes all the atoms of local time in the corners (as usual, ordered clockwise around each internal vertices) and along the edges (ordered by decreasing local time in this instance) of the ordered tree $\cT_{s,n}^{s,\mathrm{ord}}$. We will often abuse notation and think of the elements of $W(T)$ as the atoms themselves. In fact, the tree $\cT_{s,n}^{s,\mathrm{ord}}$ has, up to the labelling of the leaves, the same distribution as $\cT_{s+n}^{\mathrm{ord}}$, so using the discussion just before Lemma \ref{lem:grandeurslimites}, we can decompose the whole (unbiased) stable tree as
\begin{align*}
\cT_{s,n}^{\mathrm{ord}}\cup \bigcup_{w\in W(T)}\cT(w).
\end{align*}
In order to define our gluing plan, we need to be a little careful about labelling.  For $1 \le k \le s$, let $l_k \in \{1,2,\ldots,s\}$ be the position of $V_k$ in the increasing ordering of $V_1, \ldots, V_s$ i.e.\ $l_k = \#\{1 \le j \le s: V_j \le V_k\}$. This gives the relative planar position of the (unlabelled) leaf in $T$ corresponding to $V_k$.  Recall that $\mathcal{A}(l_k)$ is then the set of edges and corners that lie immediately to the right of the ancestral path of this leaf. Almost surely, the value $B_k = \inf \{t \ge V_{k}: X(t) = Y_{k}\}$ is such that there exists an element $w_k \in W(T)$ along the ancestral line of  the leaf in $T$ corresponding to $V_k$, such for $\epsilon$ small enough, the canonical projection of an $\epsilon$-neighbourhood around $B_k$ lies completely within some subtree hanging off $\mathcal{T}_{s,n}^{\mathrm{ord}}$ i.e.\
\begin{align*}
\pi\left(\intervalleoo{B_k-\epsilon}{B_k+\epsilon}\right)\subset \cT(w_k).
\end{align*}
For $1\leq k\leq s$, for the $j$th largest atom of local time along an edge $e\in \cA(l_k)$ and every corner $(v,\ell)\in\cA(l_k)$ on the right of the ancestral path of the root to $l_k$, conditionally on $(X;V_1,V_2,\dots V_s,U_1,U_2,\dots U_n)$ we have
\begin{align*}
w_k = 
\begin{cases}
(v,\ell) & \text{with probability $\frac{N(v,\ell)}{X(V_k)}$, for $v \in I(T), 1 \le \ell \le \deg_T(v)$},\\
 (e,j) &  \text{with probability $\frac{N^{\mathrm{right}}(e,j)}{X(V_k)}$, for $e \in E(T), j \ge 1$,}
\end{cases}
\end{align*}
independently for all $k$.  For each edge $e\in E(T)$, let $a_e$ be the number of distinct atoms of local time which appear among $w_1, \ldots, w_s$.  If $a_e \ge 1$, we denote by $j_1,j_2,\dots j_{a_e}$ the values in the set $\{j\geq 1: (e,j)\in \{w_1, \ldots, w_s\}\}$ (that is, the indices of the atoms along $e$ that receive at least one gluing) listed now in decreasing order of height i.e.\ such that $L(e, j_1) > L(e, j_2) > \cdots > L(e, j_{a_e})$.  The probability that for any fixed set $\{j_1, \ldots, j_{a_e}\}$ of distinct indices we have $L(e, j_1) > L(e, j_2) > \cdots > L(e, j_{a_e})$ is $1/a_e!$, since the random variables $L(e, j_1), \ldots, L(e, j_{a_e})$ are exchangeable and distinct with probability 1, by Proposition~\ref{prop:jointlaws}. Moreover, again by Proposition~\ref{prop:jointlaws}, these random variables are independent of the local times. For $1 \le k \le s$, let
\[
g(l_k) = \begin{cases}
		(v,\ell) & \text{ if $w_k = (v,\ell)$ for some $v \in I(T)$ and some $1 \le \ell \le \deg_T(v)$} \\
		(e,i) & \text{ if $w_k = (e, j_{i})$ for some $e \in E(T)$ and some $1 \le i \le a_e$}.
            \end{cases}
\]
This is the required gluing function for $T$.  We now derive the full gluing plan.
For $e \in E(T)$ such that $a_e \ge 1$ and $1 \le i \le a_e$, let $S_{e,i} = g^{-1}(\{(e,i)\})$ be the set of leaves mapped to the $i$th atom in decreasing order of height along the edge $e$.  Define a permutation $\sigma_{e,i}$ of $S_{e,i}$ by 
\[
\sigma_{e,i}(l_k) =  \#\{1 \le j \le s: l_j \in S_{e,i}, Y_j \ge Y_k\}.
\]
Similarly, for any $(v,\ell)\in C(T)$, we define $S_{v,\ell} = g^{-1}(\{(v,\ell)\})$ and a permutation $\sigma_{v,\ell}$ of $S_{v,\ell}$ by 
\[
\sigma_{v,\ell}(l_k)= \#\{1\leq j\leq s: l_j\in S_{v,\ell}, Y_j \geq Y_k\}.
\]
Since $Y_1, \ldots, Y_k$ are conditionally independent given $(X; V_1, \ldots, V_s, U_1, \ldots, U_n)$, we see that the permutations are conditionally independent.  Conditionally on corresponding to the same atom of local time, the relative ordering of the associated $Y_k$'s is uniform, so that the permutations are all uniform on their label-sets. By construction,
\[
\Delta=\left( \left((S_{v,\ell}, \sigma_{v,\ell})_{1 \le \ell \le \mathrm{deg}_T(v)} \right)_{v \in I(T)}, \left( (S_{e,i},\sigma_{e,i})_{1\le i \le a_e} \right)_{e\in E(T)} \right)
\]
is a gluing plan for $T$. We call $\widehat{\mathsf{G}}_{n}^{s,\mathrm{ord}}$ the corresponding (random) multigraph in $\M_{s,n}^{\mathrm{ord}}$, obtained via the bijection of Lemma~\ref{eq:bijection gluing plan}. 

For $n \ge 1$, let $\N^{n, \neq} = \{(j_1, \ldots, j_n) \in \N^n: \text{ $j_1, j_2, \ldots, j_n$ are distinct.} \}$.

\begin{prop}\label{prop:proba of a gluing}
Fix $T \in \A_{s,n}$ and suppose that $G \in \M_{s,n}^{\mathrm{ord}}$ is obtained from $T$ by a gluing plan $\Delta$. Conditionally on $(X; V_1, \ldots, V_s, U_1, \ldots, U_n)$ such that $\widehat{\mathsf{T}}_{s,n}^{s,\mathrm{ord}}=T$, the probability that $\widehat{\mathsf{G}}_{n}^{s,\mathrm{ord}}$ is equal to $G$ depends only on the type of the gluing plan $\Delta$.
	Indeed, for any gluing plan of type 
	\[
	\left(\left(k_v,(k_{v,\ell})_{1\leq\ell\leq\deg_T(v)}\right)_{v\in V(T)}, \left(k_e,a_e,(k_{e,i})_{1\leq i \leq a_e}\right)_{e\in E(T)}\right),
	\]
	this conditional probability is 
\begin{equation}\label{eq:proba gluing plan}
\frac{1}{X(V_1)X(V_2)\dots X(V_s)} \left(\prod_{v\in I(T)}\prod_{\ell=1}^{\deg_T(v)}\frac{N(v,\ell)^{k_{v,\ell}}}{k_{v,\ell}!}\right)
\cdot\left(\prod_{e\in E(T)}\sum_{(j_1,\dots,j_{a_e})\in \N^{a_e,\neq}} \frac{1}{a_e!} \prod_{i=1}^{a_e} \frac{N^{\mathrm{right}}(e,j_{i})^{k_{e,i}}} {k_{e,i}!} \right).
\end{equation}
\end{prop}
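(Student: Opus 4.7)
The plan is to work entirely conditionally on $(X; V_1,\ldots,V_s, U_1,\ldots,U_n)$ with $\widehat{\sT}_{s,n}^{s,\mathrm{ord}}=T$; under this conditioning the $Y_1,\dots,Y_s$ remain mutually independent with $Y_k \sim \mathrm{U}[0,X(V_k)]$, and each $Y_k$ determines, via $B_k=\inf\{t\geq V_k:X(t)=Y_k\}$, both the atom $w_k\in W(T)$ at which blue leaf $k$ is attached and its position within the subtree $\cT(w_k)$. The whole gluing plan is a measurable function of the $Y_k$'s, so computing the conditional probability of a specific $\Delta$ reduces to a product of independent atom-by-atom contributions.

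The first ingredient is the decomposition \eqref{eq:local time on the right of the path from the root}, which writes $X(V_k)$ as a sum of local times to the right along the ancestral line of $\pi(V_k)$. Every such contribution corresponds either to a corner $(v,\ell)\in C(T)\cap \cA(l_k)$, with weight $N(v,\ell)$, or to a branchpoint atom $(e,j)$ along an edge of $\cA(l_k)$, with weight $N^{\mathrm{right}}(e,j)$, so that
\[
\mathbb{P}\bigl(w_k=(v,\ell)\mid\cdot\bigr)=\frac{N(v,\ell)}{X(V_k)},\qquad \mathbb{P}\bigl(w_k=(e,j)\mid\cdot\bigr)=\frac{N^{\mathrm{right}}(e,j)}{X(V_k)}.
\]
Conditionally on $w_k$, the position of $Y_k$ inside its atom interval is uniform, which, as already noted at the end of Section~\ref{sec:bijectiontreegluing}, implies that the clockwise permutations $\sigma_{v,\ell}$ and $\sigma_{e,i}$ are uniformly distributed on $S_{v,\ell}$ and $S_{e,i}$ respectively. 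Independence across leaves then yields the factor $1/X(V_k)$ once per leaf, a corner factor
\[
\prod_{v\in I(T)}\prod_{\ell=1}^{\deg_T(v)}\frac{N(v,\ell)^{k_{v,\ell}}}{k_{v,\ell}!},
\]
and for each edge atom receiving $k_{e,i}$ leaves the weight $N^{\mathrm{right}}(e,\cdot)^{k_{e,i}}/k_{e,i}!$.

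The delicate step is the edge part. In the gluing plan, the index $i$ in $(S_{e,i})_{1\le i\le a_e}$ labels the intermediate vertices of the depth-first tree by decreasing distance from the root, hence by decreasing height $L(e,\cdot)$ of the corresponding branchpoint in $\cT$, whereas the natural enumeration of atoms along $e$ in $\cT$ is by local time. To combine the two labellings I would sum over the unordered $a_e$-subsets of atom indices that receive at least one gluing and, using the independence and exchangeability of the rescaled heights $(L(e,j)/L(e))_{j\ge 1}$ provided by Proposition~\ref{prop:jointlaws}, re-express this as a sum over ordered tuples $(j_1,\dots,j_{a_e})\in \N^{a_e,\neq}$ weighted by $1/a_e!$, producing the edge factor
\[
\sum_{(j_1,\dots,j_{a_e})\in \N^{a_e,\neq}}\frac{1}{a_e!}\prod_{i=1}^{a_e}\frac{N^{\mathrm{right}}(e,j_i)^{k_{e,i}}}{k_{e,i}!}.
\]
Putting the corner factor, the edge factors, the $1/X(V_k)$ normalisations and the permutation weights together reproduces the announced formula~\eqref{eq:proba gluing plan}. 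Since every term in the final expression depends only on $T$ and on the data $(k_v,(k_{v,\ell})_\ell,k_e,a_e,(k_{e,i})_i)$, and not on the particular sets $S_{v,\ell}$, $S_{e,i}$ or on the chosen permutations, the first assertion of the proposition follows as a byproduct. The main technical obstacle is precisely the passage from a height-indexed sum to a symmetric local-time-indexed sum; this is the step where Proposition~\ref{prop:jointlaws} plays a crucial role.
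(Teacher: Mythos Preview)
Your proposal is correct and follows essentially the same approach as the paper: condition on $(X;V_1,\dots,V_s,U_1,\dots,U_n)$, use the independence of the $Y_k$ together with the decomposition of $X(V_k)$ into local-time atoms to get the per-leaf weights $N(v,\ell)/X(V_k)$ and $N^{\mathrm{right}}(e,j)/X(V_k)$, invoke the exchangeability of the heights $L(e,j)$ from Proposition~\ref{prop:jointlaws} to convert the height-ordered edge contribution into the symmetric sum over $\N^{a_e,\neq}$ weighted by $1/a_e!$, and finally multiply by the uniform-permutation factors $1/k_{v,\ell}!$ and $1/k_{e,i}!$. The only slip is the cross-reference: the uniform distribution of the permutations $\sigma_{v,\ell},\sigma_{e,i}$ is established in the construction of $\widehat{\sG}_n^{s,\mathrm{ord}}$ just before the proposition (Section~\ref{sec:calculations}), not at the end of Section~\ref{sec:bijectiontreegluing}.
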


\begin{proof}
We reason conditionally on $(X; V_1, \ldots, V_s, U_1, \ldots, U_n)$. Observe that the tree $\widehat{\mathsf{T}}_{s,n}^{s,\mathrm{ord}}$ and random variables $\left(N^{\mathrm{right}}(e,j) : e\in E(T), \ j\geq 1\right)$ and $\left(N(v,\ell) :  v\in I(T), 1\leq \ell \leq \deg_T(v)\right)$ are measurable functions of these quantities, as are the relative orderings of the atoms of local time along an edge. The remaining randomness lies in the random variables $Y_1, \ldots, Y_s$.  Consider first a vertex $v \in I(T)$ and $1 \le \ell \le \deg_T(v)$.  The probability that the leaves among $l_1, \ldots, l_s$ with indices in $S_{v,\ell}$ (where $|S_{v,\ell}| = k_{v,\ell}$) are glued into corner $c_{v,\ell}$ is
\[
\frac{N(v,\ell)^{k_{v,\ell}}}{\prod_{l_j \in S_{v,\ell}} X(V_j)}.
\]
Now consider an edge $e \in E(T)$ and fixed $a_e \ge 1$.  The probability that the leaves among $l_1, \ldots, l_s$ with indices in the sets $S_{e, 1}, \ldots, S_{e,a_e}$ (with $|S_{e,i}| = k_{e,i}$) are grouped together in the gluing, in that top-to-bottom order, is given by summing over $(j_1, \ldots, j_{a_e}) \in \N^{a_e, \neq}$, corresponding to different ordered collections of atoms of local time along the edge $e$, and multiplying by the probability $1/a_e!$ that this vector is such that $L(e,j_1) > L(e, j_2) > \cdots > L(e, j_{a_e})$:
\[
\sum_{(j_1, \ldots, j_{a_e}) \in \N^{a_e, \neq}} \frac{1}{a_e!} \prod_{i=1}^{a_e} \frac{N^{\mathrm{right}}(e, j_i)^{k_{e, i}}} {\prod_{l_j \in S_{e,i}} X(V_{j}) }.
\]
The corners and edges all behave independently, and so multiplying everything together, we obtain that the probability of seeing the particular sets $((S_{v,\ell})_{1 \le \ell \le \deg_T(v)})_{v \in I(T)}, ((S_{e,i})_{1 \le i \le a_e})_{e \in E(T)}$ in the random gluing plan is 
\begin{equation}\label{eq:proba choice atoms}
\frac{1}{X(V_1)X(V_2)\dots X(V_s)} \cdot \left( \prod_{v\in I(T)} \prod_{\ell=1}^{\deg_T(v)}N(v,\ell)^{k_{v,\ell}}\right)
 \cdot \left( \prod_{e\in E(T)} \sum_{(j_1, \ldots, j_{a_e}) \in \N^{a_e, \neq}}\frac{1}{a_e!} \prod_{i = 1}^{a_e}N^{\mathrm{right}}(e,j_i)^{k_{e,i}}\right).
\end{equation} 
Since the permutations $(\sigma_{v,\ell})_{v \in I(T), 1 \le \ell \le \deg_T(v)}$ and $(\sigma_{e,i})_{e\in E(T), 1\leq i \leq a_e}$ are uniform and independent given the sets $((S_{v,\ell})_{1 \le \ell \le \deg_T(v)})_{v \in I(T)}$ and $((S_{e,\ell})_{1 \le \ell \le a_e})_{e \in E(T)}$, we see that each particular collection of permutations arises with conditional probability
\[
\left(\prod_{v\in I(T)}\prod_{\ell=1}^{\deg_T(v)}\frac{1}{k_{v,\ell}!}\right)\cdot\left(\prod_{e\in E(T)}\frac{1}{k_{e,1}!\dots k_{e,a_e}!}\right).
\]
Multiplying \eqref{eq:proba choice atoms} by this quantity gives the desired result.
\end{proof}

Recall that $\widehat{\mathsf{G}}_{n}^{s,\mathrm{ord}}$  is an ordered version of $\widehat{\mathsf{G}}_{n}^s$. We denote by $\mathsf{G}_{n}^{s,\mathrm{ord}}$ the corresponding ordered version in the $s$-biased case.

\textbf{The distribution of $\mathsf{G}_{n}^{s,\mathrm{ord}}$.} We will show that for any ordered multigraph $G\in\M_{s,n}^{\mathrm{ord}}$,
\begin{equation}\label{eq:distribution ordered multigraphs}\Pp{\mathsf{G}_{n}^{s,\mathrm{ord}}=G}\propto\prod_{v\in I(G)}\frac{w_{\deg_G(v)-1}}{(\deg_G(v)-1)!}.
\end{equation}
Fix $G \in\M_{s,n}^{\mathrm{ord}}$.
As previously mentioned, the only way to obtain $G$ by gluing the $s$ unlabelled leaves of a tree $T\in\A_{s,n}$ onto their ancestral paths is if the tree $T$ is the base-tree of $G$, i.e.\ if $T=\textsc{Erase}(\textsc{Dep}(G))$. Let $C_s:=\Ec{X(V_1)\dots X(V_s)}^{-1}$. Then using the change of measure formula \eqref{eq:change of measure}, we have 
\begin{align}\label{eq:unbiasing}
\Pp{\mathsf{G}_{n}^{s,\mathrm{ord}}=G}
&=C_s \cdot \Ec{\ind{\widehat{\mathsf{G}}_{n}^{s,\mathrm{ord}}=G}X(V_1)X(V_2)\dots X(V_s)}\notag\\
&=C_s \cdot \Pp{\widehat{\mathsf{T}}_{s,n}^{s,\mathrm{ord}}=T}\Ecsq{\ind{\widehat{\mathsf{G}}_{n}^{s,\mathrm{ord}}=G}X(V_1)X(V_2)\dots X(V_s)}{\widehat{\mathsf{T}}_{s,n}^{s,\mathrm{ord}}=T}.
\end{align}
Observe here again that, apart from the labels on the leaves, the tree $\widehat{\mathsf{T}}_{s,n}^{s,\mathrm{ord}}$ has exactly the same distribution as $\sT^{\mathrm{ord}}_{s+n}$ defined at the beginning of Section \ref{sec:MarchalTree}. So by \eqref{plane tree distribution}, we have
\begin{equation} \label{eq:tree}
\Prob{\widehat{\mathsf{T}}_{s,n}^{s,\mathrm{ord}}=T} \propto \prod_{v\in I(T)}\frac{w_{\deg_T(v)-1}}{(\deg_T(v)-1)!}.
\end{equation}
We then calculate 
\[
\Ecsq{\ind{\widehat{\mathsf{G}}_{n}^{s,\mathrm{ord}}=G}X(V_1)X(V_2)\dots X(V_s)}{\widehat{\mathsf{T}}_{s,n}^{s,\mathrm{ord}}=T}
\]
by taking expectations in the formula of Proposition~\ref{prop:proba of a gluing} conditionally on the event $\{\widehat{\mathsf{T}}_{s,n}^{s,\mathrm{ord}}=T\}$.  Recall that we fixed $T=\textsc{Erase}(\textsc{Dep}(G))$.
Using Proposition \ref{prop:jointlaws} and Remark~\ref{rem:distribution temps local aretes et noeuds}, we know explicitly the (conditional) distributions of each of the terms in \eqref{eq:proba gluing plan}. Using the independence stated there, we get
\begin{align*}
& \Ecsq{\ind{\widehat{\mathsf{G}}_{n}^{s,\mathrm{ord}}=G}X(V_1)X(V_2)\dots X(V_s)}{\widehat{\mathsf{T}}_{s,n}^{s,\mathrm{ord}}=T} \\
& = \E{\left(\prod_{v\in I(T)}\prod_{\ell=1}^{\deg_T(v)}\frac{N(v,\ell)^{k_{v,\ell}}}{k_{v,\ell}!}\right)
\cdot\left(\prod_{e\in E(T)}\sum_{(j_1,\dots,j_{a_e})\in \N^{a_e,\neq}} \frac{1}{a_e!} \prod_{i=1}^{a_e} \frac{N^{\mathrm{right}}(e,j_{i})^{k_{e,i}}} {k_{e,i}!} \right)} \\
& = \E{N(T)^s} \E{\prod_{v\in I(T)} \left(\frac{N(v)}{N(T)}\right)^{k_v} \prod_{e \in E(T)} \left(\frac{N(e)}{N(T)} \right)^{k_e} } \prod_{v \in I(T)} \E{\prod_{\ell=1}^{\deg_T(v)} \frac{1}{k_{v,\ell}!} \left(\frac{N(v,\ell)}{N(v)} \right)^{k_{v, \ell}} }  \\
& \qquad \times \prod_{e \in E(T)} \E{ \sum_{(j_1, \ldots, j_{a_e}) \in \N^{a_e, \neq}} \frac{1}{a_e!}\prod_{i=1}^{a_e}  \left(\frac{N^{\mathrm{right}}(e, j_i)}{N(e)}\right)^{k_{e,i}} \frac{1}{k_{e,i}!}}
\end{align*}
We now compute the different terms in this product separately.

Using Remark \ref{rem:distribution temps local aretes et noeuds}, we have \[N(T)=N(e_1)+N(e_2)+\dots N(e_{\abs{E(T)}})+N(v_1)+\dots N(v_{\abs{I(T)}}) \sim \mathrm{ML}\left(1/\alpha;n+s-1/\alpha \right),\]
so we get
\[\Ec{N(T)^s}= \frac{\Gamma(n+s-1/\alpha)\Gamma((n+s)\alpha +s-1)}{\Gamma((n+s)\alpha -1)\Gamma(n+s+(s-1)/\alpha)}.\]

Using Remark~\ref{rem:distribution temps local aretes et noeuds} again, 
\[
\left(\frac{N(e_1)}{N(T)},\dots,\frac{N(e_{\abs{E(T)}})}{N(T)},\frac{N(v_1)}{N(T)}, \dots , \frac{N(v_{\abs{I(T)}})}{N(T)}\right) \sim \mathrm{Dir}(\alpha-1,\dots,\alpha-1,d_1-1-\alpha, \dots, d_r-1-\alpha).
\]
Note that $|I(T)| = |E(T)| - n - s$ and $\sum_{v \in I(T)} \deg_T(v) = 2 |E(T)| - n - s -1$, which yield that
\[
(\alpha-1)|E(T)| + \sum_{v \in I(T)} (\deg_T(v) - 1 - \alpha) = (n+s) \alpha - 1.
\]
So \eqref{moments dirichlet} gives
\begin{align*}&\Ec{\prod_{v\in I(T)}\left(\frac{N(v)}{N(T)}\right)^{k_v} \prod_{e\in E(T)}\left(\frac{N(e)}{N(T)}\right)^{k_e} }\\
&\qquad =\frac{\Gamma((n+s)\alpha-1)}{\Gamma((n+s)\alpha+s-1)} \cdot\prod_{v\in I(T)}\frac{\Gamma(\deg_T(v)+k_v-1-\alpha)}{\Gamma(\deg_T(v)-1-\alpha)} \cdot \prod_{e\in E(T)}\frac{\Gamma(\alpha-1+k_e)}{\Gamma(\alpha-1)}.
\end{align*}
Let $v\in I(T)$. Proposition \ref{prop:jointlaws} gives
\[\left(\frac{N(v,1)}{N(v)},\dots, \frac{N(v,\deg_T(v))}{N(v)}\right)\sim \mathrm{Dir}(1,\dots,1),\]
and then \eqref{moments dirichlet} yields 
\begin{align*}\Ec{\prod_{\ell=1}^{\deg_T(v)}\frac{1}{k_{v,\ell}!}\left(\frac{N(v,\ell)}{N(v)}\right)^{k_{v,\ell}}}&=\frac{\Gamma(\deg_T(v))}{\Gamma(\deg_T(v)+k_v)}\cdot \left(\prod_{\ell =1}^{\deg_T(v)}\frac{\Gamma(k_{v,\ell}+1)}{\Gamma(1)}\right)\cdot  \left(\prod_{\ell=1}^{\deg_T(v)} \frac{1}{k_{v,\ell}!}\right)\\
&=\frac{(\deg_T(v)-1)!}{(\deg_T(v)+k_v-1)!}.
\end{align*}
Let $e\in E(T)$. Using Proposition \ref{prop:jointlaws}, we have 
\[
\left(\frac{N(e,j)}{N(e)}\right)_{j\geq 1}\sim \mathrm{PD}(\alpha-1,\alpha-1), \quad \text{and} \quad \left(\frac{N^{\mathrm{right}}(e,j)}{N(e,j)}\right)_{j\geq 1} \text{ are i.i.d.\ $\mathrm{U}[0,1]$,}
\]
so using Lemma~\ref{lem:pd}, and the fact that $\Ec{U^p}=1/(p+1)$ for $U \sim \mathrm{U}\intervalleff{0}{1}$, we get
\[
\Ec{ \sum_{(j_1,\dots,j_{a_e})\in \N^{a_e,\neq}}
\left( \frac{N^{\mathrm{right}}(e,j_1)}{N(e)} \right)^{k_{e,1}} \cdots \left( \frac{N^{\mathrm{right}}(e, j_{a_e})}{N(e)} \right)^{k_{e,a_e}} } 
= \left(\prod_{i=1}^{a_e} \frac{w_{k_{e,i}+1}}{k_{e,i}+1}\right)\cdot \frac{\Gamma(\alpha-1)}{\Gamma(k_e+\alpha-1)}\cdot a_e!.
\]
Multiplying this by the combinatorial factor $\displaystyle \frac{1}{a_e!k_{e,1}!\dots k_{e,a_e}!}$, we get
\[
\prod_{i=1}^{a_e} \frac{w_{k_{e,i}+1}}{(k_{e,i}+1)!}\cdot \frac{\Gamma(\alpha-1)}{\Gamma(k_e+\alpha-1)}.
\]
So, multiplying everything together, we get
\begin{align}\label{eqn:proba of gluing}
&\Ecsq{\ind{\widehat{\mathsf{G}}_{n}^{s,\mathrm{ord}}=G}X(V_1)X(V_2)\dots X(V_s)}{\widehat{\mathsf{T}}_{s,n}^{s,\mathrm{ord}}=T} \notag \\
&=\frac{\Gamma(n+s-1/\alpha)}{\Gamma(n+s + (s-1)/\alpha)}\cdot\left(\prod_{e\in E(T)}\prod_{i=1}^{a_e} \frac{w_{k_{e,i}+1}}{(k_{e,i}+1)!}\right) \cdot\prod_{v\in I(T)}\frac{\Gamma(\deg_T(v)+k_v-1-\alpha)}{(\deg_T(v)+k_v -1)!}\frac{(\deg_T(v)-1)!}{\Gamma(\deg_T(v)-1-\alpha)}.
\end{align}
Now, if we fix an ordered multigraph $G\in \M_{s,n}^{\mathrm{ord}}$, from \eqref{eq:unbiasing} and \eqref{eq:tree} we get
\[
\Pp{\mathsf{G}_{n}^{s,\mathrm{ord}}=G} \propto \prod_{v \in I(T)} \frac{w_{\deg_T(v)-1} \ \Gamma(\deg_T(v) + k_v - 1- \alpha)}{(\deg_T(v) + k_v - 1)! \ \Gamma(\deg_T(v) - 1 - \alpha)} \cdot \left(\prod_{e\in E(T)}\prod_{i=1}^{a_e} \frac{w_{k_{e,i}+1}}{(k_{e,i}+1)!}\right).
\]
Observe finally that every \emph{new} internal vertex in $G$ corresponds to some $e\in E(T)$ and some $1\leq i \leq a_e$, and has degree $k_{e,i}+2$. For a vertex $v\in I(T)$, its degree in $G$ is $\deg_G(v)=\deg_T(v)+k_v$. Moreover, 
\[
w_{\deg_T(v)+k_v-1}=w_{\deg_T(v)-1}\cdot\frac{ \Gamma(\deg_T(v)+k_v-1-\alpha)}{\Gamma(\deg_T(v)-1-\alpha)}.
\]
Putting everything together, we indeed get (\ref{eq:distribution ordered multigraphs}).

\bigskip

We have now assembled all of the ingredients needed for the proof of Theorem~\ref{th:distr_marginals}.

\begin{proof}[Proof of Theorem~\ref{th:distr_marginals}]
Take a multigraph $G\in \M_{s,n}$ with internal vertices $I(G)$, edge multiset $E(G)$ and a number $\mathrm{sl}(G)$ of self-loops. From Lemma \ref{eq:nb of cyclic orderings}, the number of corresponding ordered multigraphs is 
\begin{equation*}\frac{\prod_{v\in I(G)}(\mathrm{deg}(v)-1)!}{\abs{\mathrm{Sym}(G)}2^{\mathrm{sl}(G)}\prod_{e\in \supp(E(G))}\mathrm{mult}(e)!}.
\end{equation*}
Combining this with \eqref{eq:distribution ordered multigraphs}, we get that for any multigraph $G\in \M_{s,n}$, 
	\[\Pp{\mathsf{G}^s_{n}=G}\propto\frac{\prod_{v\in I(G)} w_{\deg(v)-1}}{\abs{\mathrm{Sym}(G)}2^{\mathrm{sl}(G)}\prod_{e\in \supp(E(G))}\mathrm{mult}(e)!},\]
as claimed.
\end{proof}

\subsection{The distribution of $(\sG^s_{n}, n \ge 0)$ as a process}
\label{sec:Marchal}

We now turn to the proof of Theorem~\ref{thm:MarchalAlg}, which says that the sequence $(\sG^s_{n},n\geq 0)$ evolves according to the multigraph version of Marchal's algorithm given in Section~\ref{sec:marginalsintro}. Again, it is easier to work with multigraphs having cyclic orderings of the half-edges around each vertex in order to break symmetries.  Recall from Section~\ref{sec:calculations} that $\mathsf G^{s,\mathrm{ord}}_{n}$ denotes a version of $\mathsf G^{s}_n$ with cyclic orderings around the vertices built from the trees $\mathsf T^{\mathrm{s,ord}}_{s,n}$.  We observe that there is a natural coupling of $\mathsf T_{s,n}^{s,\mathrm{ord}}$ for $n \ge 0$ obtained by repeatedly sampling new uniform leaves. Let $(\mathsf G^{s}_n, n\geq 0)$ and $(\mathsf G^{s,\mathrm{ord}}_{n}, n\geq 0)$ be built from this coupled version of the base trees.  Note that, for all $n$, $\mathsf G^{s,\mathrm{ord}}_{n}$ is obtained from $\mathsf G^{s,\mathrm{ord}}_{n+1}$ by erasing the leaf labelled $n+1$ together with the edge to which it is connected. Recall also from (\ref{eq:distribution ordered multigraphs}) that the distribution of $\mathsf G^{s,\mathrm{ord}}_{n}$ is 
\begin{align*}
\Pp{\mathsf{G}_{n}^{s,\mathrm{ord}}=G}=c_{s,n}\cdot \prod_{v \in I(G)}\frac{w_{\deg_G(v)-1}}{(\deg_G(v)-1)!}, \quad \forall G \in \M_{s,n}^{\mathrm{ord}},
\end{align*}
where $c_{s,n}$ is the normalizing constant. We need an ordered counterpart of Marchal's algorithm for graphs with cyclic orderings around vertices. Starting from a graph $G\in\M_{s,n}^{\mathrm{ord}}$ and assigning to its edges and vertices the weights of Marchal's algorithm, we decide that (1) if a vertex is selected, we  glue the new edge-leaf in a corner chosen uniformly around this vertex, while (2) if an edge is selected, we place the new edge-leaf on the right or on the left of the selected edge each with probability $1/2$.

We will prove Theorem~\ref{thm:MarchalAlg} together with the following result.

\begin{prop}
\label{prop:Marchalordered}
The sequence $(\mathsf G^{s,\mathrm{ord}}_{n}, n\geq 0)$ is Markovian, with transitions given by the ordered version of Marchal's algorithm.
\end{prop}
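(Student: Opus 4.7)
The plan is to exploit the coupling in which $\mathsf G^{\mathrm{ord}}_{s,n}$ is recovered from $\mathsf G^{\mathrm{ord}}_{s,n+1}$ by erasing the edge carrying the leaf labelled $n+1$ (and contracting any resulting degree-$2$ vertex back into a single edge). Because this erasure is deterministic, the $\sigma$-algebra generated by $\mathsf G^{\mathrm{ord}}_{s,0},\ldots,\mathsf G^{\mathrm{ord}}_{s,n}$ coincides with that of $\mathsf G^{\mathrm{ord}}_{s,n}$ alone, so the Markov property will be automatic. It then suffices to check, for every $G\in\M_{s,n}^{\mathrm{ord}}$ and every $G'\in\M_{s,n+1}^{\mathrm{ord}}$ whose erasure of leaf $n+1$ returns $G$, that
\[
\Pp{\mathsf G^{\mathrm{ord}}_{s,n+1}=G'\mid \mathsf G^{\mathrm{ord}}_{s,n}=G} = \frac{\Pp{\mathsf G^{\mathrm{ord}}_{s,n+1}=G'}}{\Pp{\mathsf G^{\mathrm{ord}}_{s,n}=G}}
\]
matches the ordered Marchal transition probability from $G$ to $G'$.

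I would then split the relevant $G'$ into two disjoint classes. In the \emph{vertex case}, the edge-leaf of $G'$ carrying label $n+1$ attaches to some $v\in I(G)$ with $\deg_G(v)=d\ge 3$, so $\deg_{G'}(v)=d+1$ and erasure creates no degree-$2$ vertex. In the \emph{edge case}, this edge-leaf attaches to a new internal vertex of degree $3$ lying on one of the two sides of some edge $e\in E(G)$; erasure then contracts that vertex back into $e$. Plugging the explicit formula~(\ref{eq:distribution ordered multigraphs}) into the ratio and using the recursion $w_k=(k-1-\alpha)\,w_{k-1}$ (valid for $k\ge 3$), the vertex case yields
\[
\frac{c_{s,n+1}}{c_{s,n}}\cdot\frac{w_{d}/d!}{w_{d-1}/(d-1)!}=\frac{c_{s,n+1}}{c_{s,n}}\cdot\frac{d-1-\alpha}{d},
\]
while the edge case produces a single new factor $w_2/2!=(\alpha-1)/2$ corresponding to the freshly inserted degree-$3$ vertex, giving $(c_{s,n+1}/c_{s,n})(\alpha-1)/2$.

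These expressions match the ordered Marchal probabilities $(d-1-\alpha)/(dW)$ (choose $v$ with weight $d-1-\alpha$, then one of its $d$ corners uniformly) and $(\alpha-1)/(2W)$ (choose $e$ with weight $\alpha-1$, then left or right) precisely when $c_{s,n+1}/c_{s,n}=1/W$, with $W=\alpha(s+n)+s-1$ the total Marchal weight from~(\ref{identity:degrees}). This identity for the normalising constants actually comes for free: summing the candidate conditional probabilities over all admissible $G'$ gives $\sum_{v\in I(G)}(\deg_G(v)-1-\alpha)/W + |E(G)|(\alpha-1)/W$, which equals $1$ by~(\ref{identity:degrees}), so the ratio of constants is forced to the correct value. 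The main bookkeeping obstacle will be the vertex case, where I have to identify precisely which single factor of $\prod_{u\in I(G')}w_{\deg_{G'}(u)-1}/(\deg_{G'}(u)-1)!$ changes relative to $G$ and invoke the $w_k$ recursion; once that is in place, everything falls into line, and Theorem~\ref{thm:MarchalAlg} for the unordered marginals follows by marginalising out the uniform cyclic orderings around each vertex.
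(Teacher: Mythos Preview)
Your proposal is correct and follows essentially the same route as the paper: the Markov property via deterministic backward erasure, then the ratio $\Pp{\mathsf G^{\mathrm{ord}}_{s,n+1}=H^{\mathrm{ord}}}/\Pp{\mathsf G^{\mathrm{ord}}_{s,n}=G^{\mathrm{ord}}}$ computed from~(\ref{eq:distribution ordered multigraphs}) in the vertex case (yielding $(c_{s,n+1}/c_{s,n})(d-1-\alpha)/d$) and the edge case (yielding $(c_{s,n+1}/c_{s,n})(\alpha-1)/2$). The paper simply observes that these are proportional to the ordered Marchal probabilities and hence equal, whereas you additionally verify $c_{s,n+1}/c_{s,n}=1/W$ by summing to $1$ via~(\ref{identity:degrees}); this extra step is not strictly needed but is harmless.
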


\begin{proof}[Proof of Proposition~\ref{prop:Marchalordered} and Theorem~\ref{thm:MarchalAlg}]
The Markov property of $(\mathsf G^s_n,n \geq 0)$ and $(\mathsf G^{s,\mathrm{ord}}_{n},n \geq 0)$ is immediate since the backward transitions are deterministic. Now fix $n$ and let $G^{\mathrm{ord}} \in \M^{\mathrm{ord}}_{s,n}$ and $H^{\mathrm{ord}} \in \M^{\mathrm{ord}}_{s,n+1}$ be such that $G^{\mathrm{ord}}$ is obtained from $H^{\mathrm{ord}}$ by erasing the leaf labelled $n+1$ and the adjacent edge. Note that  the internal vertices of our graphs are mutually distinguishable since the graphs are planted, with cyclic orderings around internal vertices. Then,
\begin{equation*}
\Pp{\mathsf G^{s,\mathrm{ord}}_{n+1}=H^{\mathrm{ord}} \ |\ \mathsf G^{s,\mathrm{ord}}_{n}=G^{\mathrm{ord}}}
= \frac{\Pp{\mathsf G^{s,\mathrm{ord}}_{n+1}=H^{\mathrm{ord}}}}{\Pp{\mathsf G^{s,\mathrm{ord}}_{n}=G^{\mathrm{ord}}}} 
= \frac{c_{s,n+1}}{c_{s,n}} \cdot \frac{\displaystyle  \prod_{v\in I(H^{\mathrm{ord}})}\frac{w_{\deg_{H^{\mathrm{ord}}}(v)-1}}{(\deg_{H^{\mathrm{ord}}}(v)-1)!}}{\displaystyle \prod_{v\in I(G^{\mathrm{ord}})}\frac{w_{\deg_{G^{\mathrm{ord}}}(v)-1}}{(\deg_{G^{\mathrm{ord}}}(v)-1)!}}.
\end{equation*}
Now there are two different cases, (a) and (b) below.
\vspace{-0.4cm}
	\begin{itemize}
\item[(a)] The leaf $n+1$ of $H^{\mathrm{ord}}$ is attached to a vertex $v$ of $H^{\mathrm{ord}}$ that has a degree greater or equal to $4$.  In this case, $v$ corresponds to a vertex of $G^{\mathrm{ord}}$, still denoted by $v$,  and $I(H^{\mathrm{ord}})=I(G^{\mathrm{ord}})$, $\deg_{G^{\mathrm{ord}}}(v)=\deg_{H^{\mathrm{ord}}}(v)-1$ and the degree of any other internal vertex is identical in $G^{\mathrm{ord}}$ and $H^{\mathrm{ord}}$. Since
\[
w_{\deg_{H^{\mathrm{ord}}}(v)-1}=w_{\deg_{G^{\mathrm{ord}}}(v)}=(\deg_{G^{\mathrm{ord}}}(v) -1-\alpha)w_{\deg_{G^{\mathrm{ord}}}(v)-1},
\]
together with the above expression for $\Pp{\mathsf G^{s,\mathrm{ord}}_{n+1}=H^{\mathrm{ord}} \ | \ \mathsf G^{s,\mathrm{ord}}_{n}=G^{\mathrm{ord}}}$ this implies that
\begin{equation}
\label{eq:marchal algo on vertex}
\Pp{\mathsf G^{s,\mathrm{ord}}_{n+1}=H^{\mathrm{ord}} \ |\ \mathsf G^{s,\mathrm{ord}}_{n}=G^{\mathrm{ord}}}=\frac{c_{s,n+1}}{c_{s,n}} \cdot \frac{\deg_{G^{\mathrm{ord}}}(v) -1-\alpha}{\deg_{G^{\mathrm{ord}}}(v)}.
\end{equation}
\item[(b)] The vertex $v$ has degree $3$ in $H^{\mathrm{ord}}$ and is erased when erasing the leaf $n+1$ and the adjacent edge. In this case $I(H^{\mathrm{ord}})=I(G^{\mathrm{ord}})\cup \{v\}$ and 
\begin{equation}
\label{eq:marchal algo on edge}
\Pp{\mathsf G^{s,\mathrm{ord}}_{n+1}=H^{\mathrm{ord}}\ | \ \mathsf G^{s,\mathrm{ord}}_{n}=G^{\mathrm{ord}}}=\frac{c_{s,n+1}}{c_{s,n}} \cdot \frac{\alpha-1}{2}.
\end{equation}
\end{itemize}
Proposition~\ref{prop:Marchalordered} follows immediately. 

This argument also gives the transition probabilities of the process $(\mathsf G^s_n,n \geq 0)$. Recall the function
$\psi:\M^{\mathrm{ord}}_{s,n}\rightarrow \M_{s,n}$ that forgets the cyclic ordering around vertices. We have that
\begin{equation}
\label{eq:formule_probas_totales}
\Pp{\mathsf G^s_{n+1}=H\ |\ \mathsf G^s_{n}=G}=\sum_{G^{\mathrm{ord}} \in \psi^{-1}(G)}\Pp{\mathsf G^s_{n+1}=H\ |\ \mathsf G^{s,\mathrm{ord}}_{n}=G^{\mathrm{ord}}} \Pp{\mathsf G^{s,\mathrm{ord}}_{n}=G^{\mathrm{ord}} \ |\ \mathsf G^s_{n}=G}.
\end{equation}
If $H$ is obtained from $G$ by attaching a leaf-edge to a vertex $v$ of $G$, then, from (\ref{eq:marchal algo on vertex}), we get
$$
\Pp{\mathsf G^s_{n+1}=H\ |\ \mathsf G^{s,\mathrm{ord}}_{n}=G^{\mathrm{ord}}} =\deg_G(v) \cdot \frac{c_{s,n+1}}{c_{s,n}} \cdot \frac{\deg_G(v) -1-\alpha}{\deg_G(v)} \quad \text{for all }G^{\mathrm{ord}} \in \psi^{-1}(G).
$$  
With (\ref{eq:formule_probas_totales}), this gives
$$
\Pp{\mathsf G^s_{n+1}=H\ |\ \mathsf G^s_{n}=G}=\frac{c_{s,n+1}}{c_{s,n}} \cdot (\deg_G(v) -1-\alpha).
$$
Similarly, from (\ref{eq:marchal algo on edge}) and (\ref{eq:formule_probas_totales}), we get that when $G'$ is obtained from $G$ by attaching a leaf-edge to the middle of an edge of $G$, we have
$$
\Pp{\mathsf G^s_{n+1}=H \ |\ \mathsf G^s_{n}=G}=\frac{c_{s,n+1}}{c_{s,n}} \cdot (\alpha-1).
$$
Theorem~\ref{thm:MarchalAlg} follows.
\end{proof}

\subsection{The unrooted kernel $\mathsf G_{-1}^s$}
\label{sec:unrooted}

In this section, we fix $s \geq 2$. Our goal is to prove that the distribution of $\mathsf G_{-1}^s$ is that given in Theorem~\ref{th:distr_marginals}, and that the conditional probability of $\mathsf G_{0}^s$ given $\mathsf G_{-1}^s$ is given by a step in Marchal's algorithm. We cannot proceed as before since the use of cyclic orderings around vertices is not sufficient to break all the symmetries in the unrooted graph $\mathsf G^{s}_{-1}$. We instead label the internal vertices: let $\mathsf G_{0}^{s,\mathrm{lab}}$ denote a version of $\mathsf G^s_0$ with internal vertices labelled uniformly from $1$ to $|V(\mathsf  G^s_0)|$. 

For any connected multigraph $G$ (labelled or not) we write 
$$
w(G):=\frac{\prod_{v\in I(G)} w_{\deg(v)-1}}{|I(G)|!~2^{\mathrm{sl}(G)}\prod_{e\in \supp(E(G))}\mathrm{mult}(e)!},
$$
with the usual notation. From Theorem~\ref{th:distr_marginals} and (\ref{eqn:symmetries}), we know that the distribution of the labelled graph $\mathsf G_{0}^{s,\mathrm{lab}}$ is
\begin{equation}
\label{eq:distretiq_0}
\mathbb P\big(\mathsf G_{0}^{s,\mathrm{lab}}=G\big)=\tilde{c}_{s,0} \cdot w(G),
\end{equation}
where $\tilde{c}_{s,0}$ is the normalising constant.

Let $H^{\mathrm{lab}}$ and $G^{\mathrm{lab}}$ be labelled versions of multigraphs in $\mathbb M_{s,0}$ and $\mathbb M_{s,-1}$ respectively that are \emph{compatible} in the sense that removing the root and the adjacent edge (in the following, we will use the word \emph{root-edge}) in $H^{\mathrm{lab}}$ gives a graph which, after an increasing mapping of the labelling  to $\{1,\ldots,|V(G^{\mathrm{lab}})|\}$, is $G^{\mathrm{lab}}$. 
We then distinguish 2 cases, precisely one of which occurs.
\begin{enumerate} 
\item[(a)] The root-edge in $H^{\mathrm{lab}}$ is attached to a vertex $v$ of degree $\deg_{H^{\mathrm{lab}}}(v) \geq 4$, in which case 
$$
w(H^{\mathrm{lab}})=\frac{w_{\deg_{H^{\mathrm{lab}}}(v)-1}}{w_{\deg_{G^{\mathrm{lab}}}(v)-1}}\cdot w(G^{\mathrm{lab}})=(\deg_{G^{\mathrm{lab}}}(v)-1-\alpha) \cdot w(G^{\mathrm{lab}}).
$$ 
Note that, given $G^{\mathrm{lab}}$ and a vertex $v$ of $G^{\mathrm{lab}}$, there is a unique graph $H^{\mathrm{lab}}$ which has its root-edge attached to $v$ and is compatible with $G^{\mathrm{lab}}$.
\item[(b)] The root-edge is attached to a vertex $v$ of degree $\deg_{H^{\mathrm{lab}}}(v) = 3$. Its deletion either ``creates" an edge $e$ of $G^{\mathrm{lab}}$ (possibly a self-loop, erasing then at the same time an edge of multiplicity 2 in $H^{\mathrm{lab}}$) or increases by 1 the multiplicity of an edge $e \in \mathrm{supp}(H^{\mathrm{lab}})$ (possibly a multiple self-loop, erasing, again,  at the same time an edge of multiplicity 2 in $H^{\mathrm{lab}}$). In all cases, $$w(H^{\mathrm{lab}})=\frac{w_{\deg_{H^{\mathrm{lab}}}(v)-1} \cdot \mathrm{mult}(e)}{|I(G^{\mathrm{lab}})|+1}\cdot w(G^{\mathrm{lab}})=\frac{(\alpha-1)\cdot \mathrm{mult}(e)}{|I(G^{\mathrm{lab}})|+1}\cdot w(G^{\mathrm{lab}}),$$
where $\mathrm{mult}(e)$ refers here to the multiplicity of $e$ seen as an element of  $\mathrm{supp}(G^{\mathrm{lab}})$. Note that given an edge $e$ of $G^{\mathrm{lab}}$, there are exactly $|I(G^{\mathrm{lab}})|+1$ graphs $H^{\mathrm{lab}}$ with the root-edge attached in the middle of (a copy of) $e$ that are compatible with $G^{\mathrm{lab}}$.
\end{enumerate}

From this, (\ref{eq:distretiq_0}) and the fact that the sum of the Marchal weights is $(s-1)(\alpha+1)$ for any graph in $\mathbb M_{s,-1}$ (see (\ref{identity:degrees})), we obtain the distribution of $G_{-1}^{s,\mathrm{lab}}$:
\begin{align*}
\Pp{\mathsf  G_{-1}^{s,\mathrm{lab}}=G^{\mathrm{lab}}}&=\sum_{\substack{\text{$H^{\mathrm{lab}}$ compatible} \\ \text{with $G^{\mathrm{lab}}$}}} \Pp{\mathsf  G_{0}^{s,\mathrm{lab}}=H^{\mathrm{lab}}} \\
& = \tilde{c}_{s,0}\sum_{\substack{\text{$H^{\mathrm{lab}}$ compatible} \\ \text{with $G^{\mathrm{lab}}$}}} w(H^{\mathrm{lab}}) \\
&= \tilde{c}_{s,0} \cdot \left(\sum_{v \in I(G^{\mathrm{lab}})} (\deg_{G^{\mathrm{lab}}}(v)-1-\alpha) + \sum_{e \in \mathrm{supp}(E(G^{\mathrm{lab}}))}  \mathrm{mult}(e)(\alpha-1) \right) \cdot w(G^{\mathrm{lab}}) \\
&= \tilde{c}_{s,-1} \cdot w(G^{\mathrm{lab}}),
\end{align*}
where $\tilde{c}_{s,-1}=\tilde{c}_{s,0}(s-1)(\alpha+1)$.
Together with  (\ref{eqn:symmetries}), which holds for graphs of $\mathbb M_{s,-1}$, this implies that $\mathsf G_{-1}^s$ has the required distribution. Next, to get the conditional distribution of  $\mathsf G^s_0$ given $\mathsf G^s_{-1}$ we write, for $H \in \mathbb M_{s,0}$ and $G \in \mathbb M_{s,-1}$,
\begin{equation*}
\Pp{\mathsf G^s_0 =H \ | \ \mathsf G^s_{-1} =G}
= \sum_{\substack{\text{$G^{\mathrm{lab}}$ a labelled} \\ \text{version of $G$}}} \frac{\Pp{\mathsf G^s_0 =H, \mathsf G^{s,\mathrm{lab}}_{-1} =G^{\mathrm{lab}}}}{\Pp {\mathsf G^{s,\mathrm{lab}}_{-1} 
=G^{\mathrm{lab}}}} \Pp{\mathsf G^{s,\mathrm{lab}}_{-1} =G^{\mathrm{lab}}  \  |  \  \mathsf G^s_{-1} =G}.
\end{equation*}
From the remarks above, we see that when $H$ is obtained from $G$ by gluing the root-edge to a vertex $v$ of $G$, we get
$$
 \frac{\Pp{\mathsf G^s_0 =H, \mathsf G^{s,\mathrm{lab}}_{-1} =G^{\mathrm{lab}}}}{\Pp{ \mathsf G^{s,\mathrm{lab}}_{-1} =G^{\mathrm{lab}}}}  = \frac{\tilde{c}_{s,0}}{\tilde{c}_{s,-1}} \cdot \frac{w(H)}{w(G)}=\frac{\tilde{c}_{s,0}}{\tilde{c}_{s,-1}} \cdot \left( \deg_G(v)-1-\alpha\right),
$$  
for all labelled versions $\mathsf G^{\mathrm{lab}}$.  If, on the other hand, $H$ is obtained from $G$ by gluing the root-edge to (a copy of) an edge $e \in \mathrm{supp}(G)$, 
$$
 \frac{\Pp{\mathsf G^s_0 =H,  \mathsf G^{s,\mathrm{lab}}_{-1} =G^{\mathrm{lab}}}}{\Pp{ \mathsf G^{s,\mathrm{lab}}_{-1} =G^{\mathrm{lab}}}}  =(|I(G)|+1) \cdot \frac{\tilde{c}_{s,0}}{\tilde{c}_{s,-1}} \cdot  \frac{w(H)}{w(G)}=\frac{\tilde{c}_{s,0}}{\tilde{c}_{s,-1}} \cdot  (\alpha-1) \cdot \mathrm{mult(e)}.
$$  
Putting everything together, we see that we do indeed obtain the transition probabilities corresponding to a step of Marchal's algorithm.

\subsection{The configuration model embedded in a limit component} 
\label{sec:configembedded}

The goal of this subsection is to prove Corollary \ref{cor:identification} where we identify for each $n \geq 0$ (and $n=-1$ if $s\geq 2)$ the distribution of $\mathsf G_n^s$ with that of a specific configuration model. 

\textbf{Two probability distributions.}
In Section 3 of Duquesne and Le Gall~\cite{DuquesneLeGall}, it is shown that the rooted subtree obtained by sampling $n \geq 0$ leaves in the $\alpha$-stable tree is distributed as a planted (non-ordered version of a) Galton-Watson tree conditioned to have $n$ leaves, with critical offspring distribution $\eta_\alpha$ satisfying
$$
\eta_\alpha(k)=\frac{w_k}{k!}, \quad k \geq 2, \quad \quad \eta_\alpha(1)=0, \quad \quad \eta_\alpha(0)=\frac{1}{\alpha},
$$
or, equivalently, with probability generating function $z+\alpha^{-1}(1-z)^{\alpha}, z \in (0,1],$ as already mentioned in Section \ref{sec:marginalsintro}.
Note that $\eta_{\alpha}(k) \sim_{k \rightarrow \infty} c k^{-1-\alpha}$ for some constant $c>0$, by Stirling's approximation. 
Now consider the random variable $D^{(\alpha)}$ with distribution introduced in (\ref{def:Dalpha}), and note that it is indeed a probability distribution since
\begin{align*}
\sum_{k \geq 2}\frac{w_k}{k!}&=\frac{(\alpha-1)}{2}+\sum_{k \geq 3}\frac{(k-1-\alpha)w_{k-1}}{k!} = \frac{(\alpha-1)}{2}+\sum_{k \geq 3}\frac{w_{k-1}}{(k-1)!}-(1+\alpha) \sum_{k \geq 3}\frac{w_{k-1}}{k!},
\end{align*}
which implies that 
$$
\sum_{k \geq 2}\frac{w_{k-1}}{k!}+\frac{1}{\alpha}= \frac{(\alpha-1)}{2(1+\alpha)}+\frac{1}{\alpha}=\frac{\alpha^2+\alpha+2}{2(1+\alpha)\alpha}.
$$
It is straightforward to see that $\mathbb E[D^{(\alpha)}]=2$.  Moreover, if we consider the biased version
$$\mathbb P(\hat{D}^{(\alpha)}=k):=\frac{k \mathbb P(D^{(\alpha)}=k)}{\mathbb E\big[D^{(\alpha)}\big]}, \quad k\geq 1$$
we immediately get that $\hat{D}^{(\alpha)}-1$ has the same distribution as $\eta_\alpha$. This in particular implies that $D^{(\alpha)}$ satisfies the conditions (\ref{hyp:stable}).

\textbf{The stable configuration model.} Fix $n \geq 0$ if $s\in \{0,1\}$ or $n \geq -1$ if $s \geq 2$. Then fix $m \ge n+1$ and consider the multigraph $\mathsf C_{m}$ sampled from the configuration model with i.i.d.\ degrees $D^{(\alpha)}_0,\ldots,D^{(\alpha)}_{m-1}$ distributed as $D^{(\alpha)}$. From Proposition~7.7 in \cite{RvdH}, we have that 
$$
\mathbb P\left(\mathsf C_{m} = G \ \Big| \ D^{(\alpha)}_i=d_i, 0 \leq i \leq m-1\right)=\frac{1}{(\sum_{0\leq i \leq m-1} d_i-1)!!}\cdot\frac{\prod_{0 \leq i \leq m-1} d_i!}{2^{\mathrm{sl}(G)}\prod_{e\in \supp(E)}\mathrm{mult}(e)!},
$$
for every multigraph $G=(V,E)$ with $m$ \emph{labelled} vertices of respective degrees $d_0,\ldots,d_{m-1}$ such that $\sum_{0\leq i \leq m-1} d_i$ is even. Hence, the distribution of  $\mathsf C_{m}$  is given for each such multigraph by
$$
\mathbb P( \mathsf C_{m}=G)= \left(\frac{2(1+\alpha)\alpha}{\alpha^2+\alpha+2}\right)^m \cdot \frac{1}{(\sum_{0\leq i \leq m-1} d_i-1)!!}\cdot\frac{\prod_{0 \leq i \leq m-1} d_i!}{2^{\mathrm{sl}(G)}\prod_{e\in \supp(E)}\mathrm{mult}(e)!} \cdot\frac{1}{\alpha^{\#\{i:d_i=1\}}}\cdot \prod_{i=0}^{m-1}\frac{w_{d_i-1}}{d_i!}.
$$
On the event $\{\mathsf C_m \text{ is connected},  s(\mathsf C_m) = s\}$, the sum $\sum_{0\leq i \leq m-1} d_i$ depends only on $m$ and $s$.  Conditioning additionally on $\{D^{(\alpha)}_0 = \cdots = D^{(\alpha)}_{n} = 1, D^{(\alpha)}_i \neq 1, n+1 \le i \le m-1\}$, we have $\#\{i:D^{(\alpha)}_i=1\} = n+1$. Forgetting the labels $n+1, \ldots, m-1$ (which we now know belong to internal vertices), we obtain a factor of $(m-n-1)!/|\mathrm{Sym}(G)|$.  (See (\ref{eqn:symmetries}) for further discussion.)
Together with Theorem~\ref{th:distr_marginals} this implies Corollary \ref{cor:identification}.

\section{Two simple constructions of the graph $\mathcal G^s$}
\label{sec:final}

%%%%%%%%%%%%%%%%%%%%%%%%%%%%%%%%%%%%%%%%%%%

Let $s \geq 1$. We start by proving in Section~\ref{sec:approx} that the (measured) $\mathbb R$-graph $\mathcal G^s$ is the almost sure limit of rescaled versions of its combinatorial shapes $\mathsf G_n^s, n \geq 0$ equipped with the uniform distribution on their leaves.  Together with the algorithmic construction of the graphs  $\mathsf G_n^s, n \geq 0$ (Theorem~\ref{thm:MarchalAlg}) and some urn model asymptotics recalled in the Appendix, this will lead us to the two alternative constructions of $\mathcal G^s$ presented in the introduction: in Section~\ref{sec:distrGs}, we prove Theorem~\ref{thm:distrGs} and Proposition~\ref{cor:lengthskernel}, giving the distribution of $\mathcal G^s$ as a collection of rescaled $\alpha$-stable trees appropriately glued onto the kernel $\mathsf K^s$; Section~\ref{sec:LB} is then devoted to the line-breaking construction of Theorem~\ref{th:linebreaking}.

\subsection{The graph as the scaling limit of its marginals}
\label{sec:approx}

Recall from Section \ref{sec:SLconfiguration} that $\mathcal G^s$ is constructed from $\mathcal T^s$, a biased version of the $\alpha$-stable tree, by gluing appropriately $s$ marked leaves onto randomly selected branch-points. Recall also that $X^s$ denotes the $s$-biased stable excursion from which $\mathcal T^s$ is built, that $\pi^s(V_1^s), \ldots, \pi^s(V_s^s)$ are the $s$ leaves to be glued and that $\pi^s(U_i), i \ge 1$ are i.i.d.\ uniform leaves. For all $n\geq 1$, $\mathcal T^s_{s,n}$ then denotes the subtree of $\mathcal T^s$ spanned by the root and the leaves $\pi^s(V_1^s), \ldots, \pi^s(V_s^s), \pi^s(U_1),\ldots, \pi^s(U_n)$ and we let $\mathsf T^{s}_{s,n}$ be its combinatorial shape. Finally, recall that $\mathcal G_n^s$ is the connected subgraph of $\mathcal G^s$ consisting of the union of the kernel and the paths from the leaves $\pi^s(U_1), \ldots, \pi^s(U_n)$ to the root, for all $n \geq 0$, and that the finite graph $\mathsf G_n^s$ denotes the combinatorial shape of $\mathcal G_n^s$. We will use the following observation: for all $n$ larger than some finite random variable, $\mathcal G_n^s$ is obtained from $\mathcal T^s_{s,n}$ by an appropriate gluing of the $s$ leaves  $\pi^s(V_1^s),\ldots,\pi^s(V_s^s)$ to some of its \emph{internal} vertices (for small $n$, it may be that we instead glue some leaves along edges of $\mathcal T^s_{s,n}$).

The goal of this section is to prove Proposition~\ref{prop:approx}: when the graph $\mathsf G_n^s$ is equipped with the uniform distribution on its leaves, 
\begin{equation}
\label{cv:GHPgraph}
\frac{\mathsf G_n^s}{n^{1-1/\alpha}} \underset{n \rightarrow \infty}{\overset{\mathrm{a.s.}}\longrightarrow}  \alpha \cdot\mathcal G^s
\end{equation}
for the Gromov-Hausdorff-Prokhorov topology. With this aim in mind, we first observe that $\mathcal G^s$ can be recovered from the completion of the union of its continuous marginals.  

\begin{lem}
\label{lem:density}
With probability one,
$$
\mathcal G^s= \overline{\cup_{n \geq 0} \mathcal G^s_n}
$$
and consequently $\mathcal G^s$ is the a.s. limit of $\mathcal G^s_n$
in $(\mathscr C,\mathrm{d}_{\mathrm{GHP}})$, when the graph $\mathcal G_n^s$ is endowed with the uniform distribution on its leaves for $n \ge 1$.
\end{lem}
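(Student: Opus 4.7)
The strategy is to reduce the identity $\mathcal G^s = \overline{\cup_{n\ge 0}\mathcal G^s_n}$ to the analogous density statement on the tree $\mathcal T^s$, and then push it through the canonical quotient map $q:\mathcal T^s \to \mathcal G^s$ that identifies the pairs $(\pi^s(V^s_k),\pi^s(B^s_k))$ for $1\le k\le s$. This $q$ is continuous by construction, and by the definition of $\mathcal G^s_n$ (the kernel $\mathcal K^s$ together with the $\mathcal G^s$-paths from the root to $\pi^s(U_1),\dots,\pi^s(U_n)$) one has $\mathcal G^s_n = q(\mathcal T^s_{s,n})$. Since $\mathcal T^s$ is compact, $q$ is a closed map and so commutes with closures; therefore $\overline{\cup_n \mathcal G^s_n} = q\bigl(\overline{\cup_n \mathcal T^s_{s,n}}\bigr)$, and it suffices to prove $\overline{\cup_n \mathcal T^s_{s,n}}=\mathcal T^s$ almost surely.

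The key density input will be that the countable sample $\{\pi^s(U_i):i\ge 1\}$ is a.s.\ dense in $\mathcal T^s$. Since $\mathcal T^s$ is a compact metric space, it admits a countable basis $(B_k)_{k\ge 1}$ of open balls. Fix $B_k$, pick any $t\in[0,1]$ with $x:=\pi^s(t)\in B_k$, and note that continuity of $H^s$ makes the pseudo-distance $u\mapsto d^s(u,t)$ continuous at $t$; hence there is an open interval $I_k\ni t$ of positive Lebesgue measure with $\pi^s(I_k)\subset B_k$. Conditionally on $(X^s,H^s)$, the $U_i$ are i.i.d.\ uniform on $[0,1]$, so by Borel--Cantelli infinitely many of them fall in $I_k$ almost surely. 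Intersecting over the countable basis yields a full-probability event on which $\{\pi^s(U_i)\}$ is dense in $\mathcal T^s$. Since $\cup_n \mathcal T^s_{s,n}$ contains all these points, its closure is $\mathcal T^s$, and pushing through $q$ gives the announced identity.

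The stated GHP convergence is then a direct consequence. Embedding $\mathcal G^s_n\hookrightarrow\mathcal G^s$ canonically, the Hausdorff distance $d_H(\mathcal G^s_n,\mathcal G^s)$ tends to $0$ a.s.\ because $(\mathcal G^s_n)_{n\ge 0}$ is an increasing sequence of closed subsets of the compact space $\mathcal G^s$ whose union is dense. The uniform measure on the leaves of $\mathcal G^s_n$ is the empirical measure $\tfrac{1}{n}\sum_{i=1}^n \delta_{\pi^s(U_i)}$, and since the $\pi^s(U_i)$ are i.i.d.\ samples from $\mu^s$, Varadarajan's theorem gives a.s.\ weak convergence to $\mu^s$. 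Combining Hausdorff and Prokhorov pieces (using the identity embedding as the coupling in the GHP metric) yields $\mathrm d_{\mathrm{GHP}}(\mathcal G^s_n,\mathcal G^s)\to 0$ a.s. No really hard step appears; the only point requiring care is the continuity argument that underpins the density on $\mathcal T^s$, which is what lets us sidestep any delicate absolute-continuity comparison between the $s$-biased and unbiased excursion.
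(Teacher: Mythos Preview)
Your proof is correct and follows essentially the same outline as the paper's: reduce to density of the spanned subtrees in $\mathcal T^s$, push through the quotient map $q$ to obtain the identity on $\mathcal G^s$, and then handle the measures via the a.s.\ weak convergence of empirical measures (the paper calls this the strong law of large numbers; you invoke Varadarajan's theorem, which is the same thing). The only minor difference is that the paper obtains density in $\mathcal T^s$ by citing the well-known result for the unbiased stable tree $\mathcal T$ and transferring via absolute continuity, whereas you give a direct, self-contained argument using the continuity of $H^s$ to show every open ball has positive $\mu^s$-measure; both routes are valid and equally short.
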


Indeed, it is well-known that the $\alpha$-stable tree is almost surely the completion of the union of its continuous marginals, which entails a similar result for the biased version $\mathcal T^s$ and then for the graph $\mathcal G^s$, using its construction from $\mathcal T^s$. The measures can then be incorporated by using the strong law of large numbers.

\begin{proof}[Proof of Proposition~\ref{prop:approx}] We make use of the fact (\ref{cv:GHPtrees}) that the $\alpha$-stable tree is the almost sure scaling limit of its discrete marginals. We refer the reader to the book of Burago, Burago and Ivanov~\cite{BBI01} for background on the notions of a \emph{correspondence} and its \emph{distortion}, which are used here for the proof.  

By Lemma \ref{lem:density}, it suffices to prove that almost surely
$$
\mathrm{d}_{\mathrm{GHP}}\left(\frac{\mathsf G_n^s}{n^{1-1/\alpha}}, \alpha \cdot \mathcal G_n^s \right) \underset{n \rightarrow \infty}\longrightarrow 0.
$$
We observe first that
\[
\mathrm{d}_{\mathrm{GHP}} \left(n^{1/\alpha - 1} \mathsf T_{s,n}^{s} , \alpha \cdot \mathcal{T}_{s,n}^s\right) \underset{n \rightarrow \infty}\longrightarrow 0
\]
almost surely.  This is proved for $s = 0$ in \cite[Section 2.4]{CurienHaas} and may be transferred to $s \ge 1$ by absolute continuity.  The $s = 0$ case is proved in \cite{CurienHaas} by using a natural correspondence which we introduce here for general $s$ and call $\mathcal{R}_n^s$.  It is a correspondence between $n^{\frac{1}{\alpha}-1} \mathsf T^s_{s,n}$ and $\alpha \cdot \mathcal T^s_{s,n}$. The leaves with the same labels correspond to one another, and the internal vertices of $\mathsf T^s_{s,n}$ are put in correspondence with the branch-points of $\mathcal{T}_{s,n}^s$ in the obvious way.  Finally, the edges of $\mathcal{T}_{s,n}^s$ (which have real-valued lengths and which we think of as line-segments) are put in correspondence with the vertex or vertices of $\mathsf T_{s,n}^s$ corresponding to their end-points.  From \cite{CurienHaas} we obtain that the distortion $\mathrm{dist}(\mathcal{R}_n^s)$ of the correspondence $\mathcal{R}_n^s$ tends to 0 almost surely as $n \to \infty$. To deal with the gluing, we use the fact already observed above that for $n$ sufficiently large, $\mathcal G_n^s$ is obtained from $\mathcal T^s_{s,n}$ by an appropriate gluing of the $s$ leaves $\pi^s(V_1^s), \ldots, \pi^s(V_s^s)$ to its internal vertices; similarly $\mathsf G_n^s$ is obtained by the gluing of the corresponding leaves of $\mathsf T^s_{s,n}$ to the corresponding internal vertices of this tree. It then follows from Lemma~4.2 of \cite{ABBrGoMi} that 
\[
\mathrm{d}_{\mathrm{GHP}}\left(\frac{\mathsf G_n^s}{n^{1-1/\alpha}}, \alpha \cdot \mathcal G_n^s \right) \le \frac{(s+1)}{2} \mathrm{dist}(\mathcal{R}_n^s)
\]
and the claimed almost sure convergence follows easily.
\end{proof}

\subsection{Construction from randomly scaled stable trees glued to the kernel}
\label{sec:distrGs}

We now turn to the proof of Theorem~\ref{thm:distrGs} which states that in $\left(\mathscr C, \mathrm{d}_{\mathrm{GHP}}\right)$, we have the identity in distribution of the measured compact metric spaces
\begin{equation}
\label{distrGs}
\mathcal G^s \ \overset{\mathrm{d}}= \ \mathcal G(\mathsf{K}^s)
\end{equation}
(with the notation used in Section \ref{cons1:gluing}). We will also prove Proposition~\ref{cor:lengthskernel} in this section. 

\begin{proof}[Proof of Theorem~\ref{thm:distrGs}] Using (\ref{cv:GHPgraph}), we just need to prove that
$$
\frac{\mathsf G_n^s}{n^{1-1/\alpha}} \underset{n \rightarrow \infty}{\overset{\mathrm{d}}\longrightarrow} \alpha \cdot \mathcal G(\mathsf{K}^s)
$$ 
for the Gromov-Hausdorff-Prokhorov topology, when the graph $\mathsf G_n^s$ is equipped with the uniform distribution on its leaves. (We will prove the compactness of the object on the right-hand side below.) As discussed earlier, the graph $\mathsf G_n^s$ may be viewed as a collection of trees glued to the kernel $\mathsf{K}^s$.  We will show that each of these tree-blocks converges after rescaling to its continuous counterpart used in the construction of $\mathcal G(\mathsf{K}^s)$.  Our argument and notation are similar to those used in the proof of Proposition~\ref{prop:jointlaws} concerning the stable tree.

We work conditionally on $\mathsf{K}^s$. Let $m$ denote the number of edges of $\mathsf{K}^s$, which are arbitrarily labelled as $e_1,\ldots, e_m$. Let $v_1,\ldots,v_{m-s}$ denote the internal vertices of $\mathsf{K}^s$, again in arbitrary order, and $d_1,\ldots,d_{m-s}$ their respective degrees. 
For each $n \geq 0$, we interpret these edges (resp. vertices) as edges of $\mathsf G^s_n$ with edge-lengths (resp. vertices).
For each $k$, we write $T_n(e_k)$ for the subtree of $\mathsf G^s_n$ induced by the vertices closer to $e_k$ than to any other edge $e_i,i\neq k$, including the two end-points of $e_k$. These end-vertices are interpreted as leaves of $T_n(e_k)$ and count as distinct leaves even if $e_k$ is a loop. (These formulation may seem arbitrary but it is the one needed to initiate properly the urn model we will use below.) The number of leaves of $T_n(e_k)$ is then denoted by  $M_n(e_k)$. Similarly we let $T_n(v_{i})$ denote the subtree of $\mathsf G^s_n$ induced by the set of all vertices closer to $v_i$ than to any edge $e_k, 1 \leq k \leq m$, including $v_i$ which is considered as its root. Then $M_n(v_i)$ denotes its number of leaves (here $v_i$ is \emph{not} considered to be a leaf so that, in particular, $M_n(v_i)=0$ if $T_n(v_{i})$ has vertex-set $\{v_i\}$). Next, for each $1 \leq i \leq m-s$, let $T_{n}(v_{i},j),j \geq 1$ denote the connected components of $T_n(v_{i})\backslash \{v_i\}$. We think of these subtrees as planted (and we again call the root of each $v_i$), so that if we identify their roots we recover $T_n(v_i)$. The number of such subtrees is finite (possibly zero) for each $n$ but tends to infinity as $n\rightarrow \infty$. We label them $T_{n}(v_{i},1), T_{n}(v_{i},2),\ldots$ in order of appearance, with the convention that $T_{n}(v_i,j)$ is the empty set if there are strictly fewer than $j$ subtrees at step $n$. Let $M_{n}(v_i,j)$ be the number of leaves of $T_{n}(v_{i},j), j \geq 1$.

\smallskip

\emph{$\bullet$ Scaling limits of the numbers of leaves.} It is easy to see using the algorithmic construction of the sequence $(\mathsf G_n^s,n\geq 0)$ from Theorem~\ref{thm:MarchalAlg} that the process 
\[
\left(\alpha M_n(e_1)-\alpha-1, \ldots, \alpha M_n(e_m)-\alpha-1,  \alpha M_n(v_1)+d_1-1-\alpha, \ldots, \alpha M_n(v_{m-s}) + d_{m-s} - 1 - \alpha \right)_{n\geq 0}
\]
evolves according to P\'olya's urn (see Theorem~\ref{thm:urn2}) with $2m-s$ colours of initial weights \[\left(\alpha-1,\ldots,\alpha-1,d_1-1-\alpha,\ldots,d_{m-s}-1-\alpha\right)\] respectively, and weight parameter $\alpha$. Hence, there exists a random variable $(M_1,\ldots,M_{2m-s})$ with the Dirichlet distribution of parameters specified at (\ref{distr1}) such that
\[
\left(\frac{M_n(e_1)}{n},\ldots, \frac{M_n(e_m)}{n},\frac{M_n(v_1)}{n},\ldots, \frac{M_n(v_{m-s})}{n}\right) \underset{n \rightarrow \infty}{\overset{\mathrm{a.s.}}\longrightarrow} (M_1,\ldots,M_{2m-s}).
\]
Next we observe that for all $i$ the jumps of $((M_{n}(v_{i},j))_{j\geq 1},n \geq 0)$ follow the same dynamics as a Chinese restaurant process with parameters $1/\alpha$ and $(d_i-1-\alpha)/\alpha$, independently of everything else. Since the total number of jumps at step $n$ is $M_n(v_i)$, Theorem~\ref{thm:urn3} yields
\[
\left(\frac{M^{\downarrow}_{n}(v_{i},j)}{M_n(v_i)},j\geq 1\right)  \underset{n \rightarrow \infty}{\overset{\mathrm{a.s.}}\longrightarrow} (\Delta_{i,j},j\geq 1),
\]
where $(M^{\downarrow}_{n}(v_{i},j),j\geq 1)$ denotes the decreasing reordering of $(M_{n}(v_{i},j),j\geq 1)$ and the limit $(\Delta_{i,j},j\geq 1)$ follows a Poisson-Dirichlet $\mathrm{PD}(1/\alpha, (d_i-1-\alpha)/\alpha)$ distribution, independent of the random variable $(M_1,\ldots,M_{2m-s})$. (The convergence holds in $\ell^1$ equipped with its usual metric.)

\smallskip

\emph{$\bullet$  Scaling limits of the trees $T_n(e_k),T_{n}(v_{i},j)$.} Given the processes $(M_n(e_k),n\geq 0)$, $(M_{n}(v_{i},j), n \geq 0)$, for all $k,i,j$, the jump evolutions of the trees $T_n(e_k)$, $T_{n}(v_{i},j)$, $n \geq 0$ are independent and all follow Marchal's algorithm. Then writing $e_k=\{x_k,y_k\}$ for $1\leq k \leq m$, we know by (\ref{cv:GHPtrees}) that there exist \emph{rescaled} (measured) $\alpha$-stable trees $\mathcal T_k,\mathcal T_{i,j}$, $k,i,j$ such that, given $(M_1,\ldots,M_{2m-s})$ and $(\Delta_{i,j},j\geq 1)$, the trees are independent, $\mathcal T_k$ has total mass $M_k$, $\mathcal T_{i,j}$ total mass $M_{i+m} \cdot \Delta_{i,j}$ and, furthermore,
\begin{enumerate}
\item[(a)] for all $k$,
$$
\left(\frac{T_n(e_k)}{n^{1-1/\alpha}},x_k,y_k\right)=\left(\left(\frac{M_n(e_k)}{n}\right)^{1-1/\alpha} \cdot\frac{T_n(e_k)}{M_n(e_k)^{1-1/\alpha}}  ,x_k,y_k\right) \underset{n \rightarrow \infty}{\overset{\mathrm{a.s.}}\longrightarrow}\left( \alpha \cdot \mathcal T_k,\rho_k,L_k\right)
$$
for the 2-pointed Gromov-Hausdorff-Prokhorov topology, the tree $T_n(e_k)$ being implicitly endowed with the measure that assigns weight $1/n$ to each of its leaves (here, $\rho_k$ denotes the root of $\mathcal T_k$ and $L_k$ a uniform leaf);   
\item[(b)] for all $i,j$,
$$
\left(\frac{T_{n}(v_{i},j)}{n^{1-1/\alpha}},v_i\right)=\left( \left(\frac{M_{n}(v_{i},j)}{n} \right)^{1-1/\alpha}\cdot \frac{T_{n}(v_{i},j)}{M_{n}(v_{i},j)^{1-1/\alpha}} ,v_i \right)\underset{n \rightarrow \infty}{\overset{\mathrm{a.s.}}\longrightarrow} \left(\alpha \cdot \mathcal T_{i,j},\rho_{i,j} \right)
$$
for the pointed Gromov-Hausdorff-Prokhorov topology, where again $T_{n}(v_{i},j)$ is endowed with the measure that assigns weight $1/n$ to each of its leaves, and $\rho_{i,j}$ is the root of $\mathcal T_{i,j}$.
\end{enumerate}

\emph{$\bullet$  Scaling limits of the trees $T_n(v_{i})$, and the compactness of the limit.} Fix $i \ge 1$ and recall that $T_n(v_{i})$ is obtained by identifying the roots of the trees $T_{n}(v_{i},j),j\geq 1$. We now show that $n^{-(1-1/\alpha)} T_n(v_i)$ converges in probability for the pointed GHP-topology to the measured $\R$-tree $\mathcal T_{(i)}$ obtained by identifying the roots of the trees $\alpha \cdot \mathcal T_{i,j}$. 

Let us first show that $\mathcal T_{(i)}$ is compact and is the almost sure GHP-limit as $j_0 \rightarrow \infty$ of the $\R$-tree $\mathcal T_{(i)}^{j_0}$ obtained by gluing the first $j_0$ trees $\mathcal T_{i,j},j\leq j_0$ together at their roots. (For different values of $j_0$ we think of the underlying spaces as being nested and all contained within $\mathcal T_{(i)}$.)   For a rooted $\R$-tree $\mathsf t$, we write $\mathrm{ht}(\mathsf t)$ for its height.
Let $\mathcal T$ denote a standard $\alpha$-stable tree (of total mass 1).  Then by the scaling property of the stable tree we have
\[
\E{ \left(\sup_{j > j_0}\mathrm{ht}(\mathcal T_{i,j})\right)^{\alpha/(\alpha-1)}}
\le \sum_{j > j_0} \mathbb E\big[\mathrm{ht}(\mathcal T_{i,j})^{\alpha/(\alpha-1)}\big] = \mathbb E\big[\mathrm{ht}(\mathcal T)^{\alpha/(\alpha-1)}\big] \mathbb E\big[M_{i+m}\big] \sum_{j > j_0} \mathbb E\big[\Delta_{i,j}\big].
\]
Since $\mathrm{ht}(\mathcal{T})$ has finite exponential moments (see, for example, equation (2) of \cite{Kortchemski} for a convenient statment) the right-hand side is finite, and clearly tends to 0 as $j_0 \to \infty$.  Hence the decreasing sequence $\sup_{j > j_0}\mathrm{ht}(\mathcal T_{i,j})$ converges a.s.\ to 0 as $j_0 \to \infty$. This implies in particular that $\mathcal T_{(i)}$ is a.s.\ compact. Then, note that 
\[\mathrm{d}_{\mathrm{GHP}}\left(\mathcal T_{(i)},\mathcal T_{(i)}^{j_0}\right)\le \max\Bigg(\sup_{j > j_0}\mathrm{ht}(\mathcal T_{i,j}), M_{i+m} \cdot \sum_{j > j_0} \Delta_{i,j} \Bigg)
\]
since $M_{i+m} \cdot \sum_{j > j_0} \Delta_{i,j}$ is the total mass of $\mathcal T_{(i)} \setminus \mathcal T_{(i)}^{j_0}$. This total mass also converges to 0.  Hence, $\mathcal T_{(i)}^{j_0} \to \mathcal{T}_{(i)}$ almost surely as $j_0 \to \infty$ with respect to the GHP-topology.

Next, note that for $j_0 \in \mathbb N$,
\begin{align*}
\mathrm{d}_{\mathrm{GHP}}\left(\frac{T_n(v_i)}{n^{1-1/\alpha}},\alpha \cdot \mathcal T_{(i)} \right) &\leq  \sum_{j=1}^{j_0} \mathrm{d}_{\mathrm{GHP}}\left(\frac{T_n(v_{i}, j)}{n^{1-1/\alpha}},\alpha \cdot \mathcal T_{i,j} \right) +\alpha \cdot \mathrm{d}_{\mathrm{GHP}}\left(\mathcal T_{(i)}^{j_0} ,\mathcal T_{(i)} \right) \\
& \qquad +\sup_{j > j_0} \mathrm{ht}\left(\frac{T_n(v_{i}, j)}{n^{1-1/\alpha}}\right)  + \sum_{j > j_0} \frac{M_n(v_{i},j)}{n}.
\end{align*}
We already know that the first term on the right-hand side converges a.s.\ to 0 as $n \rightarrow \infty$ (for $j_0$ fixed) and that the second term converges a.s.\ to 0 as $j_0 \rightarrow \infty$. Moreover, since $M_n(v_i) \le n$, by dominated convergence we have
\[
\mathbb E\left[ \sum_{j > j_0} \frac{M_n(v_{i},j)}{n}\right]=\mathbb E\left[ \frac{M_n(v_i)}{n}-\sum_{j \leq j_0} \frac{M_n(v_{i},j)}{n}\right] \underset{n \rightarrow \infty}\longrightarrow \mathbb E\left[M_{i+m}\left(1- \sum_{j \leq j_0} \Delta_{i,j}\right)\right]
\]
and then
\[
\lim_{j_0 \rightarrow \infty} \lim_{n \rightarrow \infty} \mathbb E\left[ \sum_{j > j_0} \frac{M_n(v_{i},j)}{n}\right]=0.
\]
Now note that 
\begin{align*}
\limsup_{n \rightarrow \infty}  \sum_{j> j_0}\mathbb E\left[ \frac{(\mathrm{ht}(T_{n}(v_{i},j))^{\alpha/(\alpha-1)}}{n}\right] & \leq  \limsup_{n \rightarrow \infty} \sum_{j> j_0} \mathbb E\left[ \frac{(\mathrm{ht}(T_{n,j}(v_{i},j))^{\alpha/(\alpha-1)}}{M_{n}(v_{i},j)} \cdot \frac{M_{n}(v_{i},j)}{n} \right] \\
&\underset{}\leq  C_{\alpha} \limsup_{n \rightarrow \infty}  \sum_{j> j_0} \mathbb E\left[ \frac{M_{n}(v_{i},j)}{n} \right],
\end{align*}
by \cite[Lemma~33]{HaasMiermont}, where  $C_{\alpha}$ is a finite constant depending only on $\alpha$. So by Markov's inequality, we get
\[
\lim_{j_0 \rightarrow \infty} \limsup_{n \rightarrow \infty}\mathbb P\left(\sup_{j > j_0} \mathrm{ht}\left(\frac{T_n(v_{i},j)}{n^{1-1/\alpha}}\right)>\varepsilon\right)=0
\]
for all $\varepsilon>0$.
Putting everything together, we obtain the convergence in probability
\[
\mathrm{d}_{\mathrm{GHP}}\left(\frac{T_n(v_i)}{n^{1-1/\alpha}},\alpha \cdot \mathcal T_{(i)} \right) \convprob 0.
\]

\emph{$\bullet$  Final gluing.} Finally, the graph $\mathsf G_n^s$ is obtained by gluing appropriately the $2m-s$ trees $T_n(e_k), T_n(v_{i}), 1 \leq k \leq m, 1 \leq i \leq m-s$ along the kernel $\mathsf{K}^s$.  Using the results above, it therefore converges in probability, after multiplication of distances by $n^{-(1-1/\alpha)}$, to a version of $\alpha \cdot \mathcal G(\mathsf{K}^s)$.
\end{proof}

From this we immediately obtain the joint distribution of the edge-lengths of the continuous kernel $\mathcal{K}^s$. Given that the number of edges of $\mathcal{K}^s$ is $m$ and keeping the notation of the proofs, we see that the lengths of the $m$ edges are given by $M_i^{1-1/\alpha} \cdot \Lambda_i, 1 \leq i\leq m$ where the $\Lambda_i$ are i.i.d.\ $\mathrm{ML}(1-1/\alpha,1-1/\alpha)$ random variables (this is the distribution of the distance between a uniform leaf and the root in a standard $\alpha$-stable tree) and independent of $(M_1,\ldots,M_{2m-s})$. We may combine Remark~\ref{rem:identityMLDir} and Lemma~\ref{lem:DirichletBeta} to check that the distribution of this $m$-tuple of random variables coincides with the one of Proposition~\ref{cor:lengthskernel} when $n=0$. More generally, we could deduce from (\ref{distrGs}) the joint distribution of the edge-lengths of the  continuous marginals $\mathcal G_n^s$, $n \geq 0$. However, it is simpler to prove this directly using urn arguments similar to those above.

\begin{proof}[Proof Proposition~\ref{cor:lengthskernel}] Fix $n_0 \geq 0$. We work conditionally on $\mathsf G^s_{n_0}=(V,E)$. For each edge $e \in E$ and each $n \geq n_0$, let $L_n(e)$ denote the length of $e$ in $\mathsf G^s_n$ and let $L_n^{\mathrm{tot}}:=\sum_{e \in E} L_n(e)$. 
From the algorithmic construction of $(\mathsf G^s_n,n \geq n_0)$ we get that
\begin{enumerate}
\item[(a)] the process
\[(L_n^{\mathrm{tot}}, n \geq n_0)\] is a triangular urn scheme as defined in Theorem~\ref{thm:urn1} with initial weights $$a=|E|, \quad b=\frac{(n_0+s)\alpha+s-1}{\alpha-1}-|E|$$ ($b$ is the initial total weight of the vertices of $\mathsf G^s_{n_0}$, divided by $\alpha-1$) and additional weight parameters $\gamma=1$ and $\beta=\alpha/(\alpha-1)$;
\item[(b)] the jumps of the process $((L_n(e),e \in E),n\geq n_0)$ evolve according to P\'olya's urn with initial weights $a_i=1,1 \leq i \leq |E|$, and additional weight parameter $\beta=1$, independently of $L_n^{\mathrm{tot}}$.
\end{enumerate}
Theorem~\ref{thm:urn1} and Theorem~\ref{thm:urn2} therefore imply that 
$
\left(L_n(e)/n^{1-1/\alpha}, e \in E\right)
$
converges almost surely to a random vector with distribution (\ref{ref:BMLD}). The conclusion then follows from the convergence (\ref{cv:GHPgraph}).
\end{proof}
  
\subsection{The line-breaking construction}
\label{sec:LB}

The proof of Theorem~\ref{th:linebreaking} for $s \geq 1$ is inspired by the approach used in  \cite{GHlinebreaking} to obtain a line-breaking construction of the stable trees. As we have already mentioned, we rely again on the algorithmic construction of the sequence $(\mathsf G_n^s,n\geq 0)$. The notation below coincides with that of Section~\ref{cons2:LB}. 
Moreover, for each $n$, we let $\mathsf H^s_{n}$ denote the combinatorial shape of $\mathcal H^s_n$. 
The metric space $\mathcal H^s_{n}$ is then interpreted as a finite graph (the graph $\mathsf H^s_{n}$) with edge-lengths. We let 
 $L_{n}$ denote this sequence of edge-lengths, ordered arbitrarily, and let $W_{n}$ denote the sequence of weights at internal vertices of $\mathcal H^s_{n}$ (i.e. the weights attributed by the measure $\eta_n$ to each of these vertices), also ordered arbitrarily. We start with a preliminary lemma.  
 
\begin{lem} 
\label{lem:lengthslinebreaking}
Given $\mathsf H^s_{k}$, $0 \leq k \leq n$, and in particular that $\mathsf H^{s}_n$ has $m$ edges and $m-(n+s)$ internal vertices with degrees $d_1,\ldots,d_{m-(n+s)}$, we have 
$$
\big(L_{n}, W_{n}\big) \ \overset{\mathrm{(d)}}= \ \mathrm{ML}\left(1-\frac{1}{\alpha},\frac{(n+s) \alpha +(s-1)}{\alpha} \right) \cdot \mathrm{Dir}\bigg(\underbrace{1,\ldots,1}_m,\frac{d_1-1-\alpha}{\alpha-1},\ldots, \frac{d_{m-(n+s)}-1-\alpha}{\alpha-1}\bigg),
$$
the random variables on the right-hand side being independent. In particular,
$$
L_{n} \ \overset{\mathrm{(d)}}= \ \mathrm{ML}\left(1-\frac{1}{\alpha},\frac{(n+s) \alpha +(s-1)}{\alpha} \right) \cdot \mathrm{Beta}\left( m, \frac{(n+s)\alpha+s-1}{\alpha-1}-m \right) \cdot \mathrm{Dir}\left(1,\ldots,1\right).
$$
\end{lem}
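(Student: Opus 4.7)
The plan is to prove the joint statement $(L_n, W_n) \sim \mathrm{ML}(1-1/\alpha,\theta_n) \cdot \mathrm{Dir}(\dots)$ by induction on $n$, and then deduce the second display (for $L_n$ alone) via standard Dirichlet amalgamation: collapsing the vertex coordinates into a single component produces a $\mathrm{Beta}(|E|, S_n - |E|)$ factor and the edge coordinates conditioned on their sum form a uniform Dirichlet. The base case $n=0$ is immediate from the initialisation of the construction: one directly reads $(L_0, W_0) = R_s \cdot (\Theta_1,\ldots,\Theta_{2k-s})$ with $R_s \sim \mathrm{ML}(1-1/\alpha,(s\alpha+s-1)/\alpha)$ independent of the prescribed Dirichlet.

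For the inductive step, write $(L_n, W_n) = T_n Y^{(n)}$ with $T_n := R_{n+s}$, and assume that conditionally on $\mathsf H^s_0,\ldots,\mathsf H^s_n$ one has $T_n \sim \mathrm{ML}(1-1/\alpha,\theta_n)$ and $Y^{(n)} \sim \mathrm{Dir}(a^{(n)})$ independent, with parameter vector summing to $S_n := ((n+s)\alpha+s-1)/(\alpha-1)$. Now condition further on $\mathsf H^s_{n+1}$, which reveals whether the step was an edge-split (Case A) or a vertex-attachment (Case B) and which component was selected. Because the selection is size-biased, the conditional law of $Y^{(n)}$ becomes the tilted Dirichlet with the selected parameter incremented by $1$ (total sum $S_n+1$); in Case A, the uniform split $U \sim \mathrm{Beta}(1,1)$ then replaces the inflated component by two components of parameter $1$ each, leaving the sum at $S_n+1$.

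The crux is the three-component distributional identity, for $B^* := T_n/T_{n+1}$ and $B_n$ the Beta variable of the construction:
\[
\bigl(B^*,\,(1-B^*)B_n,\,(1-B^*)(1-B_n)\bigr) \ \sim \ \mathrm{Dir}\!\left(S_n+1,\,1,\,\tfrac{2-\alpha}{\alpha-1}\right).
\]
By the characterisation of the Markov chain $(R_n)$, $B^* \sim \mathrm{Beta}(S_n+1,1/(\alpha-1))$ is independent of $T_{n+1}$, while $B_n \sim \mathrm{Beta}(1,(2-\alpha)/(\alpha-1))$ is independent of everything else. The identity then follows from the standard Dirichlet decomposition, but only upon recognising the algebraic relation $1/(\alpha-1) = 1 + (2-\alpha)/(\alpha-1)$, which makes the second Beta parameter of $B^*$ exactly equal to the sum of the last two Dirichlet parameters. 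I expect this to be the main subtle point: it is not obvious a priori from the statement of the construction that the Beta parameters of $B^*$ and $B_n$ are tuned to interlock into a clean three-dimensional Dirichlet.

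Once this identity is in hand, the inductive step closes by applying Dirichlet composition (splitting the $B^*$-component according to the tilted-and-split $Y^{(n)}$): in Case A, the resulting $Y^{(n+1)}$ is precisely $\mathrm{Dir}$ with the target parameters, the two new components $(1-B^*)B_n$ and $(1-B^*)(1-B_n)$ corresponding to the new edge and the newly created degree-$3$ internal vertex respectively. In Case B, one additionally amalgamates $(1-B^*)(1-B_n)$ into the $v$-entry of $B^* Y^{(n)}$, producing vertex parameter $a_v + 1 + (2-\alpha)/(\alpha-1) = a_v + 1/(\alpha-1)$, which is exactly the increment caused by $\deg_{\mathsf H_n^s}(v)$ becoming $\deg_{\mathsf H_n^s}(v)+1$. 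Finally, $T_{n+1} \sim \mathrm{ML}(1-1/\alpha,\theta_{n+1})$ by definition of the chain, and by the Markov property and the induction hypothesis it is independent of $B^*$, $B_n$, $U$ and $Y^{(n)}$, hence of $Y^{(n+1)}$, closing the induction.
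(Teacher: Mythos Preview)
Your argument is correct and follows the same route as the paper, which proves the base case from the definition of $(\mathcal H^s_0,\eta_0)$, then proceeds by induction via Dirichlet manipulations (referring to \cite[Proposition~3.2]{GHlinebreaking} for the details you have spelled out), and deduces the second display from the first using Lemma~\ref{lem:DirichletBeta} together with the degree-sum identity $\sum_i (d_i-1-\alpha)/(\alpha-1) = S_n - m$.

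One small point to tighten: your inductive hypothesis as stated only gives $Y^{(n)}\perp T_n$, but to conclude $Y^{(n+1)}\perp T_{n+1}$ you implicitly use $Y^{(n)}\perp (R_k)_{k\ge n+s}$, i.e.\ independence from the whole future of the chain. This is harmless---it holds at $n=0$ since $\Theta$ is sampled independently of $(R_k)$, and it propagates because $B^*=R_{n+s}/R_{n+s+1}$ is independent of $(R_k)_{k\ge n+s+1}$ (combine $B^*\perp R_{n+s+1}$ with the Markov property of the chain)---but you should carry this strengthened hypothesis through the induction explicitly.
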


\begin{proof}
For $n=0$, the first identity in distribution holds by definition of $(\mathcal H^s_{0},\eta_0)$ in the line-breaking construction. The rest of the proof proceeds by induction on $n$, and is based essentially on manipulations of Dirichlet distributions. The steps  are exactly the same as those of Proposition~3.2 in \cite{GHlinebreaking}, to which we refer the interested reader. The only slight change to highlight is that here the degrees $d_1,\ldots,d_{m-{(n+s)}}$ of the internal vertices of a graph in $\mathbb M_{s,n}$ with $m$ edges necessarily satisfy
$$
\sum_{i=1}^{m-(n+s)}\frac{d_i-1-\alpha}{\alpha-1}=\frac{(n+s)\alpha+s-1}{\alpha-1}-m,
$$
as already observed in (\ref{identity:degrees}). This fact is also used, together with Lemma~\ref{lem:DirichletBeta}, to deduce the distribution of $L_{n}$ from that of the pair $(L_{n},W_{n})$.
\end{proof}

\begin{proof}[Proof of Theorem~\ref{th:linebreaking}]
Note that the metric spaces $\mathcal H^s_n,n\geq 0$ have implicit leaf-labels, given by their order of appearance in the construction. The metric spaces  $\mathcal G^s_n,n\geq 0$ are also leaf-labelled by construction. Both models are sampling consistent: the metric space indexed by $n$ is obtained from the metric space indexed by $n+1$ by removing the leaf labelled $n+1$ and the adjacent line-segment (this description is a little informal but hopefully clear). Hence, we only need to prove that, for all $n \geq 0$,
\begin{equation}
\label{id:marginalsLB}
\mathcal H^s_n  \overset{\mathrm{d}}= {\mathcal G}^s_n,
\end{equation}
these compact metric spaces being implicitly endowed with the uniform distribution on their leaves, and still leaf-labelled.
Together with the sampling consistency, this will imply that the processes  of compact measured metric spaces  $(\mathcal H^s_n,n\geq 0)$ and  $(\mathcal G^s_n,n\geq 0)$ have the same distribution. Since $\mathcal G^s$ is the almost sure GHP-scaling limit of $\mathcal G^s_n$ (Lemma~\ref{lem:density}) and since $(\mathscr C,\mathrm{d}_{\mathrm{GHP}})$ is complete, this will in turn entail that $\mathcal H^s_n$ converges a.s.\ to a random compact measured metric space distributed as $\mathcal G^s$.

To prove (\ref{id:marginalsLB}), we first check that the sequence of finite graphs $(\mathsf H^s_n,n\geq 0)$ evolves according to Marchal's algorithm, as does $(\mathsf G^s_n,n\geq 0)$.
This relies on Lemmas~\ref{lem:lengthslinebreaking} and \ref{lem:biasDir} which imply that for each $n$, given $(\mathsf H^s_k, 0\leq k \leq n)$,  the probability that the new segment in the line-breaking construction is attached to a given edge of $\mathsf H^s_n$ is proportional to 1, whereas the probability that it is attached to a given vertex with degree $d_i\geq 3$ is proportional to $(d_i-1-\alpha)/(\alpha-1)$. Hence, the sequences of graphs $(\mathsf H^s_n,n \geq 0)$ and $({\mathsf G}^s_n, n \geq 0)$ have the same distribution since $\mathsf G_0^s=\mathsf H_0^s=\mathsf{K}^s$, including leaf-labels. Then we get (\ref{id:marginalsLB}) by simply noticing that the distribution of the edge-lengths of $\mathcal H^s_n$ given $(\mathsf H^s_k,0\leq k \leq n)$ is the same as that of the edge-lengths of ${\mathcal G}^s_n$ 
given  $(\mathsf G^s_k,0\leq k \leq n)$, by Lemma~\ref{lem:lengthslinebreaking} and Proposition~\ref{cor:lengthskernel}. 
\end{proof}

\section{Appendix: distributions, urn models and applications}
\label{sec:urns}

%%%%%%%%%%%%%%%%%%%%%%%%%%%%%%%%%

We detail in this appendix some classical asymptotic results on urn models that are needed at various points in the paper. We first recall the definitions and some properties of several distributions that are related to these asymptotics. 

\subsection{Some probability distributions}

For more detail on the material in this section, we refer to Pitman~\cite{PitmanStFl}.

\subsubsection{Definitions and moments}

\textbf{Beta distributions.} For parameters $a,b >0$, the $\mathrm{Beta}(a,b)$ distribution has density
\[
\frac{\Gamma(a+b)}{\Gamma(a) \Gamma(b)} x^{a-1} (1-x)^{b-1}
\]
with respect to the Lebesgue measure on $(0,1)$. If $B \sim \mathrm{Beta}(a,b)$, then for $p,q \in \mathbb R_+,$ 
\begin{align}\label{eq:moments beta}
\Ec{B^p(1-B)^q}=\frac{\Gamma(a+b)}{\Gamma(a+b+p+q)}\frac{\Gamma(a+p)}{\Gamma(a)}\frac{\Gamma(b+q)}{\Gamma(b)}.
\end{align}

\medskip

\textbf{Dirichlet distributions.}  For parameters $a_1, a_2, \ldots, a_n > 0$, the Dirichlet distribution $\mathrm{Dir}(a_1, a_2, \ldots, a_n)$ has density
\[
\frac{\Gamma(\sum_{i=1}^n a_i)}{\prod_{i=1}^n \Gamma(a_i)} \prod_{j=1}^{n} x_i^{a_j-1}
\]
with respect to the Lebesgue measure on the simplex $\{(x_1, \ldots, x_n) \in [0,1]^n: \sum_{i=1}^n x_i = 1\}$.
When $(X_1, \dots, X_n)\sim \mathrm{Dir}(a_1,\dots, a_n)$, for $k_1,\ldots,k_n \in \mathbb R_+$,
\begin{equation}\label{moments dirichlet}
	\Ec{X_1^{k_1}X_2^{k_2}\dots X_n^{k_n}}= \frac{\Gamma\left(\sum_{i=1}^na_i\right)}{\Gamma(\sum_{i=1}^n (a_i+k_i))}\cdot \prod_{i=1}^n\frac{\Gamma(a_i+k_i)}{\Gamma(a_i)}.
\end{equation}

\medskip

\textbf{Generalized Mittag-Leffler distributions.} Let $0<\beta<1$, $\theta>-\beta$. An $\R_+$-valued random variable $M$ has the {generalized Mittag-Leffler distribution} $\mathrm{ML}(\beta,\theta)$ if, for all suitable test functions $f$, we have
\begin{equation}
\label{def:ML}
\mathbb E\left[ f(M)\right]= \frac{\mathbb E\left[ \sigma_{\beta}^{-\theta}f\big(\sigma_{\beta}^{-\beta}\big)\right]}{\mathbb E\big[ \sigma_{\beta}^{-\theta}\big]},
\end{equation}
where $\sigma_{\beta}$ is a stable random variable with Laplace transform $\mathbb E[e^{-\lambda \sigma_{\beta}}] =  \exp(-\lambda^{\beta}),\lambda \geq 0$.
For $p \in \mathbb R_+$, 
\[
\mathbb E\left[ M^p\right]=\frac{\Gamma(\theta) \Gamma(\theta/\beta + p)}{\Gamma(\theta/\beta) \Gamma(\theta + p \beta)}=\frac{\Gamma(\theta+1) \Gamma(\theta/\beta + p+1)}{\Gamma(\theta/\beta+1) \Gamma(\theta + p \beta+1)}.
\]

\medskip

\textbf{Poisson-Dirichlet distributions.} Let $0<\beta<1$, $\theta>-\beta$ and for $i \ge 1$, let $B_i \sim \mathrm{Beta}(1-\beta, \theta + i \beta)$ independently.  Then the decreasing sequence $(P_i)_{i\geq 1}=(Q_i^{\downarrow})_{i \ge 1}$ where $Q_j = B_j \prod_{i=1}^{j-1} (1 - B_i)$ has the $\mathrm{PD}(\beta,\theta)$ distribution. The almost sure limit $ W := \Gamma(1 - \beta) \lim_{i \to \infty} i (P_i^{\downarrow})^{\beta}$ has the $\mathrm{ML}(\beta,\theta)$ distribution. 

\subsubsection{Distributional properties}

\begin{lem}
\label{lem:DirichletBeta}
If $(X_1,\ldots,X_n) \sim \mathrm{Dir}(a_1,\ldots,a_n)$ then for all $1 \leq m \leq n-1$,
$
(X_1,\ldots,X_m)
$
is distributed as the product of two independent random variables:
$$
\mathrm{Beta}\left(\sum_{i=1}^m a_i,\sum_{i=m+1}^n a_i\right) \cdot \mathrm{Dir}(a_1,\ldots,a_m).
$$
\end{lem}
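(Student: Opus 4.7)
The cleanest route is via the gamma representation of the Dirichlet distribution. I will introduce independent random variables $G_1,\ldots,G_n$ with $G_i\sim\mathrm{Gamma}(a_i,1)$, set $T=G_1+\cdots+G_n$ and use the classical fact that $(G_1/T,\ldots,G_n/T)\sim\mathrm{Dir}(a_1,\ldots,a_n)$, with the normalised vector independent of the total $T$. Thus I may assume without loss of generality that $X_i=G_i/T$ for $1\le i\le n$.

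Next, I would set $S=G_1+\cdots+G_m$ so that $S\sim\mathrm{Gamma}(\sum_{i=1}^m a_i,1)$ and $T-S\sim\mathrm{Gamma}(\sum_{i=m+1}^n a_i,1)$ are independent, while $(G_1/S,\ldots,G_m/S)\sim\mathrm{Dir}(a_1,\ldots,a_m)$ is independent of $S$ (this is just the same gamma/Dirichlet identity applied to the first $m$ coordinates). The key observation is the factorisation
\[
(X_1,\ldots,X_m)=\frac{S}{T}\cdot\left(\frac{G_1}{S},\ldots,\frac{G_m}{S}\right).
\]
Two standard facts then finish the argument: first, $S/T\sim\mathrm{Beta}(\sum_{i=1}^m a_i,\sum_{i=m+1}^n a_i)$, since the ratio of a $\mathrm{Gamma}(a,1)$ to the sum of itself and an independent $\mathrm{Gamma}(b,1)$ is $\mathrm{Beta}(a,b)$; and second, $S/T$ is a function of $(S,T-S)$, hence is independent of $(G_1/S,\ldots,G_m/S)$, because the latter is independent of $S$ and is built purely from $G_1,\ldots,G_m$, which are independent of $T-S$.

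There is no real obstacle: the only thing to be careful about is the independence statement, which has to combine two separate applications of the gamma/Dirichlet identity. One slight alternative would be to compute the density of $(X_1,\ldots,X_m)$ directly by integrating out $X_{m+1},\ldots,X_{n-1}$ from the joint Dirichlet density, then perform the change of variables $(X_1,\ldots,X_m)\mapsto\bigl(\sum_{i=1}^m X_i,\,X_1/\!\sum X_j,\ldots,X_{m-1}/\!\sum X_j\bigr)$ and recognise the resulting product of densities; but the gamma-representation proof sketched above is substantially shorter and makes the independence transparent.
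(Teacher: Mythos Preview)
Your argument is correct. The gamma representation is the standard and most transparent route, and your handling of the independence step is careful: the key point is that $(G_1/S,\ldots,G_m/S)$ is a function of $G_1,\ldots,G_m$ alone and is independent of $S$ by the gamma/Dirichlet identity, hence independent of the pair $(S,T-S)$ and therefore of $S/T$.

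The paper itself does not supply a proof of this lemma; it is stated in the appendix as a standard distributional fact, with a general reference to Pitman~\cite{PitmanStFl} for background on the material of that section. Your proof is exactly the kind of argument one would expect to find there.
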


\medskip

\begin{lem}
\label{lem:biasDir}
Suppose that $(X_1, X_2, \ldots, X_n) \sim \mathrm{Dir}(a_1, a_2, \ldots, a_n)$.  Let $I$ be the index of a size-biased pick from amongst the co-ordinates i.e.\ $\Prob{I=i|X_1, X_2, \ldots, X_n} = X_i$, for $1 \le i \le n$.  Then
\[
\Prob{I = i} = \frac{a_i}{a_1 + a_2 + \ldots + a_n}
\]
for $1 \le i \le n$ and, conditionally on $I=i$,
\[
(X_1, X_2, \ldots, X_n) \sim \mathrm{Dir}(a_1, \ldots, a_{i-1}, a_i + 1, a_{i+1}, \ldots, a_n).
\]
\end{lem}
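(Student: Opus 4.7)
The statement is essentially a direct computation with the explicit Dirichlet density, so the plan is to carry that computation out cleanly. First, I would compute the marginal probability $\Prob{I=i}$. By the tower property and the definition of a size-biased pick,
\[
\Prob{I=i} = \E{\Ppsq{I=i}{X_1,\ldots,X_n}} = \E{X_i},
\]
and $\E{X_i} = a_i/(a_1+\cdots+a_n)$ is an immediate special case of the moment identity~\eqref{moments dirichlet} (take $k_i=1$ and $k_j=0$ for $j\neq i$). This gives the first claim.

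Next, for the conditional distribution, I would argue at the level of densities. Let $a = a_1+\cdots+a_n$. For any bounded measurable test function $f$ on the $(n-1)$-dimensional simplex, write
\[
\E{f(X_1,\ldots,X_n) \mid I=i} = \frac{\E{X_i f(X_1,\ldots,X_n)}}{\E{X_i}} = \frac{a}{a_i} \int f(x) \cdot x_i \cdot \frac{\Gamma(a)}{\prod_j \Gamma(a_j)} \prod_j x_j^{a_j-1}\, \mathrm{d}x,
\]
where the integral is over the simplex. Grouping the factor of $x_i$ into the product yields $x_i^{a_i} \prod_{j\neq i} x_j^{a_j-1}$, which is, up to normalization, the density of $\mathrm{Dir}(a_1,\ldots,a_{i-1}, a_i+1, a_{i+1}, \ldots, a_n)$. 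The normalizing constants match: the prefactor $\tfrac{a}{a_i}\cdot \tfrac{\Gamma(a)}{\prod_j \Gamma(a_j)}$ equals $\tfrac{\Gamma(a+1)}{\Gamma(a_i+1)\prod_{j\neq i}\Gamma(a_j)}$, which is precisely the normalizing constant of the shifted Dirichlet. This establishes the conditional distribution claim.

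There is no real obstacle here: the computation is essentially one line once one writes down the size-biased density. If anything, the only care needed is to keep track of the normalizing constants and to note that the moment formula~\eqref{moments dirichlet} supplies both $\E{X_i}$ and, more generally, a consistency check that all moments of the size-biased vector agree with those of the shifted Dirichlet (this alternative, purely moment-based, route would also work and may be preferable for readers who prefer to avoid densities).
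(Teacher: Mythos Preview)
Your proof is correct. The paper does not actually give a proof of this lemma: it is stated in the appendix among a list of standard distributional facts about Dirichlet and Poisson--Dirichlet laws, with a reference to Pitman~\cite{PitmanStFl}, and no argument is supplied. Your direct density computation is exactly the natural way to verify it, and the moment-based alternative you mention would work equally well.
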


\medskip

\begin{lem}\label{lem:PD factorisation}
Let $0<\beta<1$, $\theta>-\beta$, and let $(P_i)_{i\geq 1}$ have distribution $\mathrm{PD}(\beta,\theta)$. Let $J$  be the index of a size-biased pick from this sequence, i.e.\ $\Ppsq{J=j}{(P_i)_{i\geq 1}}=P_j$, for $j\geq 1$. We let $(P_i')_{i\geq 1}$ be the decreasing sequence $(1-P_J)^{-1}\cdot (P_i)_{i\ge 1, i\neq J}$, reindexed by $\mathbb N$. Then
\begin{align*}
P_J\sim \mathrm{Beta}(1-\beta,\theta+\beta) \quad \text{and} \quad  (P_i')_{i\geq 1} \sim \mathrm{PD}(\beta,\theta+\beta),
\end{align*}
and these two random variables are independent.
\end{lem}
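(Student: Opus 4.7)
The plan is to prove this via the GEM stick-breaking representation already recalled just above the statement, together with the well-known fact that the GEM sequence is in size-biased order. First, let $(B_i)_{i\geq 1}$ be independent with $B_i \sim \mathrm{Beta}(1-\beta,\theta+i\beta)$ and set $Q_j := B_j\prod_{i=1}^{j-1}(1-B_i)$, so that $(Q_j^{\downarrow})_{j\geq 1}\sim\mathrm{PD}(\beta,\theta)$. I would first recall (or quickly establish) the size-biased order property: if $K$ is the random index such that $Q_1 = P_K$, where $(P_i)_{i\geq 1} = (Q_j^{\downarrow})_{j\geq 1}$, then $\mathbb{P}(K=i\mid (P_j)_{j\geq 1}) = P_i$. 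This is classical (see e.g.\ Pitman~\cite{PitmanStFl}); it can be proved by showing that for any bounded test function $f$ and any permutation producing the decreasing reordering, $\mathbb{E}[f(Q_1,(Q_j)_{j\geq 2})] = \mathbb{E}[\sum_i P_i f(P_i, \text{rest})]$, using the exchangeability of the size-biased random permutation of $(P_i)_{i\geq 1}$.

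Given this, the joint law of $(P_J,(P_i)_{i\ge 1, i\neq J})$ coincides with that of $(Q_1,(Q_j)_{j\geq 2}^{\downarrow})$. Hence $P_J \overset{d}= Q_1 = B_1 \sim \mathrm{Beta}(1-\beta,\theta+\beta)$, which gives the first claim. For the rest, observe that
\[
\frac{Q_{j+1}}{1-B_1} = B_{j+1}\prod_{i=2}^{j}(1-B_i), \qquad j\geq 1,
\]
so that $((1-B_1)^{-1}Q_{j+1})_{j\geq 1}$ has exactly the GEM form associated with the independent Beta variables $\tilde B_j := B_{j+1}$ of parameter $\mathrm{Beta}(1-\beta,\theta+(j+1)\beta) = \mathrm{Beta}(1-\beta,(\theta+\beta)+j\beta)$. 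Thus its decreasing rearrangement is distributed as $\mathrm{PD}(\beta,\theta+\beta)$. Finally, independence of $(P'_i)_{i\geq 1}$ from $P_J$ follows from the independence of $B_1$ from the sequence $(B_j)_{j\geq 2}$, and hence from the rescaled stick-breaking sequence above.

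The only potentially delicate step is the size-biased-order property of the GEM sequence, but since this is a standard and well-documented result, I would simply cite it from \cite{PitmanStFl}. The rest of the argument is a direct manipulation of the stick-breaking formula, with no analytic estimates required.
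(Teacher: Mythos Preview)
The paper does not actually supply a proof of this lemma: it is stated in the appendix as one of several standard distributional facts, with a blanket reference to Pitman~\cite{PitmanStFl} at the start of the section. Your argument via the GEM stick-breaking representation is correct and is precisely the standard route to this result: the key input is that the GEM sequence $(Q_j)_{j\ge 1}$ is a size-biased permutation of the $\mathrm{PD}(\beta,\theta)$ sequence (so $Q_1$ has the law of $P_J$), after which the claimed Beta law, the $\mathrm{PD}(\beta,\theta+\beta)$ law of the rescaled remainder, and the independence all drop out of the product structure $Q_{j+1}/(1-B_1)=B_{j+1}\prod_{i=2}^{j}(1-B_i)$ and the independence of $B_1$ from $(B_j)_{j\ge 2}$. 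There is nothing to correct; your write-up matches what one would find in \cite{PitmanStFl}.
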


\medskip

Let $\N^{n,\neq}:=\enstq{(i_1,\dots,i_n)\in \mathbb N^n }{i_1,\dots,i_n \text{ are distinct}}$.
\begin{lem}\label{lem:pd}
	Let $(P_i)_{i\geq 1} \sim \mathrm{PD}(\beta,\theta)$ with $0<\beta<1$ and $\theta>-\beta$. Then for all $k_1, k_2, \dots, k_n \in \intervallefo{1}{\infty}$, 
	\begin{equation}
	\label{eq:momentsPD}
	\Ec{\sum_{(i_1,\dots,i_n)\in \N^{n,\neq}}P_{i_1}^{k_1} \dots P_{i_n}^{k_n}}= \left(\prod_{i=1}^n \beta \frac{\Gamma(k_i-\beta)}{\Gamma(1-\beta)}\right)\frac{\Gamma(\theta)}{\Gamma(\theta+\sum_{j=1}^n k_j)} \frac{\Gamma(\theta/\beta+n)}{\Gamma(\theta/\beta)}.
	\end{equation}
	In particular, for $(P_i)_{i\geq 1} \sim \mathrm{PD}(\alpha-1,\alpha -1)$ with $\alpha\in\intervalleoo{1}{2}$, and $k_1,\dots k_n \in \N$, we have
	\begin{align*}\Ec{\sum_{(i_1,\dots,i_n)\in \N^{n,\neq}}P_{i_1}^{k_1} \dots P_{i_n}^{k_n}}&= \left(\prod_{i=1}^n (\alpha-1) \frac{\Gamma(k_i+1-\alpha)}{\Gamma(2-\alpha)}\right)\frac{\Gamma(\alpha-1) \  n!}{\Gamma(\alpha-1+\sum_{j=1}^n k_j)} \\
	&= \left(\prod_{i=1}^n w_{k_i+1}\right) \frac{\Gamma(\alpha-1) \ n!}{\Gamma(\alpha-1+\sum_{j=1}^n k_j)} ,
	\end{align*}
	where the weights $w_1,w_2,\dots$ are defined in \eqref{Marchal's weights}. 
\end{lem}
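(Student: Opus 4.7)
The plan is to proceed by induction on $n$ using the size-biased decomposition of the $\mathrm{PD}(\beta,\theta)$ sequence provided by Lemma \ref{lem:PD factorisation}. The key observation is that a size-biased pick $P_J$ from $(P_i)_{i\geq 1}$ has law $\mathrm{Beta}(1-\beta,\theta+\beta)$ and is independent of the renormalised remainder $(P_i')_{i\geq 1} := (1-P_J)^{-1}(P_i)_{i\neq J}$, which has law $\mathrm{PD}(\beta,\theta+\beta)$. This peels one atom off the sequence while shifting the parameter from $\theta$ to $\theta+\beta$, which is exactly the shift demanded by the ratio $\Gamma(\theta/\beta+n)/\Gamma(\theta/\beta)$ on the right-hand side of \eqref{eq:momentsPD}.

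For the base case $n=1$, by the defining property of a size-biased pick,
\[
\Ec{\sum_{i\geq 1} P_i^{k_1}} \ = \ \Ec{\Ecsq{P_J^{k_1-1}}{(P_i)_{i\geq 1}}} \ = \ \Ec{P_J^{k_1-1}},
\]
and the identity follows directly from the Beta moment formula \eqref{eq:moments beta} applied to $P_J \sim \mathrm{Beta}(1-\beta,\theta+\beta)$, after using $\Gamma(\theta+1)=\theta\,\Gamma(\theta)$ and $\Gamma(\theta/\beta+1)=(\theta/\beta)\,\Gamma(\theta/\beta)$.

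For the inductive step I would use the same size-biased pick to rewrite
\[
\Ec{\sum_{(i_1,\ldots,i_n)\in\N^{n,\neq}} P_{i_1}^{k_1}\cdots P_{i_n}^{k_n}} \ = \ \Ec{P_J^{k_1-1}(1-P_J)^{k_2+\cdots+k_n}}\cdot\Ec{\sum_{(i_2,\ldots,i_n)\in\N^{n-1,\neq}} (P_{i_2}')^{k_2}\cdots (P_{i_n}')^{k_n}},
\]
using the identity $(P_i)_{i\neq J} = (1-P_J)(P_i')_{i\geq 1}$ and the fact that the remaining sum is invariant under reindexing, together with the independence of $P_J$ and $(P_i')_{i\geq 1}$. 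The first factor is supplied by \eqref{eq:moments beta} and the second by the induction hypothesis at level $n-1$, applied with parameters $(\beta,\theta+\beta)$. Combining these yields a routine bookkeeping computation: the factor $\Gamma(\theta+\beta+k_2+\cdots+k_n)$ arising from the Beta moment cancels the corresponding denominator coming from the inductive hypothesis, and the shift identity $\Gamma(\theta+1)/\Gamma(\theta/\beta+1) = \beta\,\Gamma(\theta)/\Gamma(\theta/\beta)$ absorbs exactly the extra factor of $\beta$ needed to upgrade the product from $i=2,\ldots,n$ to $i=1,\ldots,n$. The main potential pitfall, rather than any genuine obstruction, is to keep track of these Gamma-function shifts correctly.

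Finally, the specialised formula for $(\beta,\theta)=(\alpha-1,\alpha-1)$ follows by direct substitution: since $\theta/\beta = 1$ the factor $\Gamma(\theta/\beta+n)/\Gamma(\theta/\beta)$ becomes $n!$, and comparing $(\alpha-1)\,\Gamma(k_i+1-\alpha)/\Gamma(2-\alpha)$ with the definition \eqref{Marchal's weights} of $w_{k_i+1}$ for $k_i\geq 1$ (a telescoping identity on the falling products $(k_i-\alpha)(k_i-1-\alpha)\cdots(2-\alpha)(\alpha-1)$) gives the claimed reformulation.
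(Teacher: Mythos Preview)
Your proposal is correct and follows essentially the same approach as the paper: induction on $n$ using the size-biased decomposition of Lemma~\ref{lem:PD factorisation}, the Beta moment formula \eqref{eq:moments beta}, and the parameter shift $\theta\mapsto\theta+\beta$ for the remainder. The only cosmetic differences are that the paper takes $n=0$ as the base case (with the empty product convention) and peels off the index $i_n$ rather than $i_1$; neither affects the argument.
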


\begin{proof}
	We proceed by induction on $n$. For $n=0$ we use the convention that the left-hand side of (\ref{eq:momentsPD}) is 1 and so the identity is true.
	Let $n\geq 1$ and suppose that the identity is true for $n-1$. Then letting $J$ be such that $\Ppsq{J=j}{(P_i)_{i\geq 1}}=P_j$, we have
\begin{align*}
& \Ec{ \sum_{(i_1,\ldots,i_n)\in \N^{n,\neq}} P_{i_1}^{k_1} \dots P_{i_n}^{k_n} } \\
& \qquad =\Ec{ P_J^{k_n-1} (1-P_J)^{k_1+\cdots+k_{n-1} } \underset{\in (\N\setminus\{J\})^{n-1,\neq}}{\sum_{(i_1,\dots,i_{n-1})}}\left( \frac{P_{i_1}}{1-P_J} \right)^{k_1} \cdots \left(\frac{P_{i_{n-1}}}{1-P_J}\right) ^{k_{n-1}}}\\
& \qquad  = \Ec{ P_J^{k_n-1}(1-P_J)^{k_1+\cdots+k_{n-1}}} \cdot \Ec{\sum_{(i_1,\cdots,i_{n-1})\in \N^{n-1,\neq}}(P'_{i_1})^{k_1} \cdots (P'_{i_{n-1}})^{k_{n-1}}},
\end{align*}
by Lemma~\ref{lem:PD factorisation}, where $(P'_i)_{i\geq 1}\sim \mathrm{PD}(\beta,\beta+\theta)$ and $P_J\sim\mathrm{Beta}(1-\beta,\theta+\beta)$. Using \eqref{eq:moments beta}, we have
\begin{align*}
\Ec{ P_J^{k_n-1}(1-P_J)^{k_1+\dots+k_{n-1}}}&=\frac{\Gamma(1+\theta)\Gamma(1-\beta+k_n-1)\Gamma(\theta+\beta+\sum_{i=1}^{n-1}k_i)}{\Gamma(\theta+\beta)\Gamma(1-\beta)\Gamma(1+\theta+\sum_{i=1}^{n}k_i-1)}\\
&=\left(\beta\frac{\Gamma(k_n-\beta)}{\Gamma(1-\beta)}\right)\frac{\Gamma(\theta)\Gamma(\theta+\beta+\sum_{i=1}^{n-1}k_i)}{\Gamma(\theta+\beta)\Gamma(\theta+\sum_{j=1}^{n}k_j)}\frac{\theta}{\beta}.
\end{align*}
The induction hypothesis applied to the sequence $(P'_i)_{i\geq 1}$, which has distribution $\mathrm{PD}(\beta,\beta+\theta)$, then yields
\begin{align*}
& \Ec{\sum_{(i_1,\dots,i_{n-1})\in \N^{n-1,\neq}}(P'_{i_1})^{k_1} \dots (P'_{i_{n-1}})^{k_{n-1}}} \\
&\qquad =\left(\prod_{i=1}^{n-1} \beta \frac{\Gamma(k_i-\beta)}{\Gamma(1-\beta)}\right)\frac{\Gamma(\theta+\beta)}{\Gamma(\theta+\beta+\sum_{j=1}^{n-1} k_j)} \frac{\Gamma((\theta+\beta)/\beta+n-1)}{\Gamma((\theta+\beta)/\beta)}\\
&\qquad =\left(\prod_{i=1}^{n-1} \beta \frac{\Gamma(k_i-\beta)}{\Gamma(1-\beta)}\right)\frac{\Gamma(\theta+\beta)}{\Gamma(\theta+\beta+\sum_{j=1}^{n-1} k_j)} \frac{\Gamma(\theta/\beta+n)}{(\theta/\beta)\Gamma(\theta/\beta)},
\end{align*}
and the result for $n$ follows by multiplying together the last display and the preceding one.
\end{proof}

\subsection{P\'olya's urn, Chinese restaurant processes and triangular urn schemes} \label{sec:urnproof}

We gather here some classical results for urn models. 

\begin{theorem}[P\'olya's urn]
\label{thm:urn2}
Consider an urn model with $k$ colours, with initial weights $a_1,\ldots,a_k>0$ respectively. At each step, draw a colour with a probability proportional to its weight and add an extra weight $\beta>0$ to this colour. Let $W^{(1)}_n,\ldots, W^{(k)}_n$ denote the weights of the $k$ colours after $n$ steps. Then
$$
\left(\frac{W^{(1)}_n}{\beta n},\ldots,\frac{W^{(k)}_n}{\beta n}\right)  \underset{n \rightarrow \infty}{\overset{\mathrm{a.s.}}\longrightarrow} (W^{(1)},\ldots,W^{(k)})
$$ 
where $(W^{(1)},\ldots,W^{(k)}) \sim \mathrm{Dir}(a_1/\beta, \ldots, a_k/\beta)$. 
\end{theorem}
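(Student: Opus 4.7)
My proof would proceed in two independent parts: a martingale argument that yields almost sure convergence to some limiting random vector, followed by a combinatorial computation that identifies the limit as Dirichlet.

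For the first part, set $S_n := \sum_{j=1}^k a_j + n\beta$ for the deterministic total weight of the urn after $n$ steps, and let $\mathcal F_n$ be the $\sigma$-algebra generated by the first $n$ draws. Conditionally on $\mathcal F_n$, colour $i$ is selected at step $n+1$ with probability $W_n^{(i)}/S_n$, so
\[
\mathbb E[W_{n+1}^{(i)} \mid \mathcal F_n] = W_n^{(i)} + \beta \cdot \frac{W_n^{(i)}}{S_n} = W_n^{(i)} \cdot \frac{S_{n+1}}{S_n}.
\]
Thus $M_n^{(i)} := W_n^{(i)}/S_n$ is a non-negative martingale bounded by $1$, and Doob's martingale convergence theorem gives a.s.\ convergence to a limit $W^{(i)}$. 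Since $S_n/(\beta n) \to 1$, the ratios $W_n^{(i)}/(\beta n)$ converge almost surely to the same limits, which sum to $1$.

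For the second part, let $N_n^{(i)} = (W_n^{(i)} - a_i)/\beta$ denote the number of times colour $i$ is drawn in the first $n$ steps. For any specific sequence of draws $(x_1, \ldots, x_n)$ with counts $n_i$, the probability equals
\[
\frac{\prod_{i=1}^k \prod_{j=0}^{n_i - 1}(a_i + j\beta)}{\prod_{t=0}^{n-1}(S_0 + t\beta)} = \frac{\prod_{i=1}^k \Gamma(a_i/\beta + n_i)/\Gamma(a_i/\beta)}{\Gamma(\sum_j a_j/\beta + n)/\Gamma(\sum_j a_j/\beta)},
\]
where I have used the identity $\prod_{j=0}^{m-1}(a+j\beta) = \beta^m \Gamma(a/\beta + m)/\Gamma(a/\beta)$. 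In particular this depends only on the counts, confirming exchangeability, and summing over the $\binom{n}{n_1,\dots,n_k}$ orderings identifies $(N_n^{(1)}, \ldots, N_n^{(k)})$ as Dirichlet-multinomial with parameters $(n;\, a_1/\beta, \ldots, a_k/\beta)$. Applying Stirling's formula to the ratios of gamma functions then yields that $(N_n^{(1)}/n, \ldots, N_n^{(k)}/n)$ converges in distribution to $\mathrm{Dir}(a_1/\beta, \ldots, a_k/\beta)$ (this is the standard moment-matching passage from Dirichlet-multinomial to Dirichlet). Since $W_n^{(i)}/(\beta n) - N_n^{(i)}/n = a_i/(\beta n) = O(1/n)$ is deterministic, the distributional limit of the left-hand side equals $\mathrm{Dir}(a_1/\beta, \ldots, a_k/\beta)$ as well, and the almost sure limit from the first part must have this distribution.

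\textbf{Main difficulty.} This is a classical result and no genuine obstacle arises; the only bookkeeping is the gamma-function rewriting that recasts the explicit one-draw probability into Dirichlet-multinomial form and the standard Stirling asymptotics. An alternative route would be to compute joint factorial-type moments $\mathbb E\!\left[\prod_i \Gamma(W_n^{(i)}/\beta + k_i)/\Gamma(W_n^{(i)}/\beta)\right]$ by telescoping conditional expectations and match them against the known moments of $\mathrm{Dir}(a_1/\beta, \ldots, a_k/\beta)$; this avoids Stirling but requires a short moment-determinacy argument on the simplex, which is immediate since the limit is compactly supported.
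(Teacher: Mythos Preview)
Your proof is correct and follows the standard martingale-plus-exchangeability route. The paper itself does not prove this classical result: it simply states the theorem in the appendix and refers the reader to Pitman's book \cite[Chapter~3]{PitmanStFl} for details, so your proposal in fact supplies more than the paper does.
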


\medskip

\begin{theorem}[The Chinese restaurant process]
\label{thm:urn3}
Fix two parameters $\beta \in (0,1)$ and $\theta>-\beta$. The process starts with one table occupied by a single customer and then evolves in a Markovian way as follows: given that at step $n$ there are $k$ occupied tables with $n_i$ customers at table $i$, a new customer is placed at table $i$ with probability $(n_i-\beta)/(n+\theta)$ and placed at a new table with probability $(\theta+k\beta)/(n+\theta)$. Let $N_i(n),i\geq 1$ be the number of customers at table $i$ at step $n$ and let $(N^{\downarrow}_i(n),i\geq 1)$ be the decreasing rearrangement of these terms. Let $K(n)$ denote the number of occupied tables at step $n$. Then
$$
\left(\frac{N^{\downarrow}_i(n),i \geq 1}{n}\right)  \underset{n \rightarrow \infty}{\overset{\mathrm{a.s.} \text{\emph{ in} }\ell^1}\longrightarrow} \left(Y_i,i\geq 1\right) \quad \text{and}\quad \frac{K(n)}{n^{\beta} } \underset{n \rightarrow \infty}{\overset{\mathrm{a.s.}}\longrightarrow} W
$$ 
where $\left(Y_i,i\geq 1\right) \sim \mathrm{PD}(\beta,\theta)$ and $W \sim \mathrm{ML}\left(\beta,\theta \right)$.
\end{theorem}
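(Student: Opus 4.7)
The plan is to prove the two convergences separately, each via a martingale argument combined with a distributional identification.

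For the block sizes, the partition $\Pi_n$ of $\{1,\ldots,n\}$ induced by the seating is exchangeable, since the seating rule depends only on block sizes and not on customer labels. Kingman's paintbox theorem then yields a.s.-existing frequencies $(Y_i)_{i\ge 1}$ with $N_i^{\downarrow}(n)/n \to Y_i$ almost surely in $\ell^1$. To identify the law I would work in size-biased order (which here coincides with order of creation of tables). The seating rule gives $\mathbb{E}[N_1(n+1)-\beta\mid\mathcal{F}_n]=(N_1(n)-\beta)(n+\theta+1)/(n+\theta)$, and iterating shows that
\begin{align*}
M^{(p)}_n := \frac{(N_1(n)-\beta)(N_1(n)-\beta+1)\cdots(N_1(n)-\beta+p-1)}{(n+\theta)(n+\theta+1)\cdots(n+\theta+p-1)}
\end{align*}
is a bounded martingale for each $p\ge 1$. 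These converge a.s.\ and in $L^1$, yielding $\mathbb{E}[Q_1^p]=(1-\beta)(2-\beta)\cdots(p-\beta)/((\theta+1)(\theta+2)\cdots(\theta+p))$ for $Q_1:=\lim N_1(n)/n$; these are exactly the moments of $\mathrm{Beta}(1-\beta,\theta+\beta)$. The key structural observation is that, conditionally on the evolution of table $1$, the subsequence of customers avoiding table $1$ (relabelled consecutively) evolves as a CRP with parameters $(\beta,\theta+\beta)$, independently of $Q_1$. Iterating produces the stick-breaking representation $Q_i = B_i\prod_{j<i}(1-B_j)$ with independent $B_i\sim\mathrm{Beta}(1-\beta,\theta+i\beta)$, whose decreasing rearrangement is $\mathrm{PD}(\beta,\theta)$ by definition. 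This gives the first claim.

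For $K(n)/n^\beta$, the seating rule gives $\mathbb{E}[K(n+1)\mid\mathcal{F}_n]=K(n)(n+\theta+\beta)/(n+\theta)+\theta/(n+\theta)$. Set $a_n := \Gamma(1+\theta+\beta)\Gamma(n+\theta)/(\Gamma(1+\theta)\Gamma(n+\theta+\beta))$, chosen so that $a_{n+1}(n+\theta+\beta)/(n+\theta)=a_n$ and $a_n\sim \Gamma(1+\theta+\beta)/\Gamma(1+\theta) \cdot n^{-\beta}$ by Stirling. Using the telescoping identity $\beta a_{k+1}/(k+\theta)=a_k-a_{k+1}$, a direct computation shows that $M_n := a_n K(n) - (\theta/\beta)(1-a_n)$ is a martingale. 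Since $K(n)\ge 0$ and $a_n\in(0,1]$, it is bounded below by $-\theta/\beta$, hence converges almost surely to some $M_\infty$. It follows that $K(n)/n^\beta \to W$ a.s., where $W=(M_\infty+\theta/\beta)\Gamma(1+\theta)/\Gamma(1+\theta+\beta)$.

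To identify $W \sim \mathrm{ML}(\beta,\theta)$, the cleanest route is via higher moments: for each $p\ge 1$, iterating the computation above produces a bounded-below martingale built from $K(n)(K(n)+\beta)\cdots(K(n)+(p-1)\beta)$ with an explicit normalising constant, and matching the resulting moments of $W$ against those of $\mathrm{ML}(\beta,\theta)$ recalled in Section~\ref{sec:urns} completes the identification, since the Mittag-Leffler distribution is determined by its moments. An alternative path is the $\alpha$-diversity identity $W=\Gamma(1-\beta)\lim_i i\,Y_i^{\beta}$ combined with the $\mathrm{PD}(\beta,\theta)$ law of $(Y_i)$ from the first part. The main obstacle I anticipate is precisely this distributional identification: either route demands a substantial additional argument (an infinite family of telescoping identities, or the nontrivial $\alpha$-diversity relation) beyond the mere existence of the a.s.\ limit.
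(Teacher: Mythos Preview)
The paper does not prove this theorem at all: it is stated as a background result in the appendix, and the authors simply write ``We refer to Pitman's book \cite[Chapter 3]{PitmanStFl} for more detail on these first two theorems.'' So there is no proof in the paper to compare against.

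Your sketch is essentially the standard argument one finds in Pitman's lecture notes, and the main computations are correct: the rising-factorial martingales $M_n^{(p)}$ for the first block are genuine bounded martingales with the moments you claim, and your martingale $M_n=a_nK(n)-(\theta/\beta)(1-a_n)$ for the number of tables is correct (the telescoping identity $\beta a_{k+1}/(k+\theta)=a_k-a_{k+1}$ checks out, and the lower bound $-\theta/\beta$ holds using $K(n)\ge 1$ when $\theta<0$). Two places deserve a bit more care if you want a self-contained proof rather than a sketch. First, the ``deletion'' step---that the customers avoiding table $1$ form a CRP$(\beta,\theta+\beta)$ \emph{independently} of $Q_1$---is the substantive structural fact; the transition-probability calculation you allude to gives the conditional law, but the independence from $Q_1$ (equivalently, from the whole trajectory of $N_1$) needs to be argued explicitly. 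Second, as you yourself flag, the identification $W\sim\mathrm{ML}(\beta,\theta)$ is where the real work lies; the higher-order factorial martingale $K(n)(K(n)+\beta)\cdots(K(n)+(p-1)\beta)/((n+\theta)(n+\theta+1)\cdots(n+\theta+p-1))$ is the right object, but one also needs to justify taking the limit inside the expectation (these martingales are not obviously bounded above), typically via an $L^2$ or uniform-integrability argument. None of this is a gap in the sense of a wrong idea---it is exactly the route taken in the reference the paper cites.
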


We refer to Pitman's book \cite[Chapter 3]{PitmanStFl} for more detail on these first two theorems.

\begin{theorem}[Triangular urn schemes]
\label{thm:urn1}
Consider an urn model with two colours, red and black. Suppose that initially red has weight $a>0$ and black has weight $b \geq 0$.  At each step, we sample a colour with probability proportional to its current weight in the urn.  Let $\beta>\gamma > 0$ and assume that when red is drawn then weight $\gamma$ is added to red and weight $\beta-\gamma$ to black, whereas when black is drawn then weight $\beta$ is added to black (and nothing to red). Let $R_n$ denote the red weight after $n$ steps. Then, 
\[
\frac{R_n}{n^{\gamma/\beta}} \underset{n \rightarrow \infty}{\overset{\mathrm{a.s.}}\longrightarrow} R
\]
where the random variable $R$ is such that $R\sim \gamma \cdot \mathrm{Beta}(\frac{a}{\gamma},\frac{b}{\gamma})\cdot \mathrm{ML}\left(\frac{\gamma}{\beta},\frac{(a+b)}{\beta}\right)$ with the Beta and Mittag-Leffler random variables being independent, and the convention that $\mathrm{Beta}(a,0) = 1$ a.s.
\end{theorem}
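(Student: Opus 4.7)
My plan is to prove this via the method of moments, using the standard rising-factorial martingale for triangular urn schemes. The key simplification here is that the total weight $T_n := a+b+\beta n$ is deterministic, since weight $\beta$ is added at every step regardless of which colour is drawn; consequently, red is drawn at step $n+1$ with probability exactly $R_n/T_n$.

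The first step is to introduce the rising factorials $R_n^{(p)} := R_n(R_n+\gamma)(R_n+2\gamma)\cdots(R_n+(p-1)\gamma)$ and to verify by a direct computation (separating the two cases $R_{n+1}=R_n$ and $R_{n+1}=R_n+\gamma$) that
\[
\mathbb{E}\bigl[R_{n+1}^{(p)} \mid \mathcal{F}_n\bigr] = R_n^{(p)}\left(1 + \frac{p\gamma}{T_n}\right),
\]
so that $M_n^{(p)} := R_n^{(p)} \prod_{k=0}^{n-1}(1+p\gamma/T_k)^{-1}$ is a positive martingale. Taking expectations and rewriting the product via the Gamma function yields, by Stirling,
\[
\mathbb{E}\bigl[R_n^{(p)}\bigr] \sim \mu_p \cdot n^{p\gamma/\beta}, \qquad \mu_p := \gamma^p \cdot \frac{\Gamma(a/\gamma+p)}{\Gamma(a/\gamma)} \cdot \frac{\Gamma((a+b)/\beta)}{\Gamma((a+b+p\gamma)/\beta)}.
\]
The $p=1$ martingale is positive and the $p=2$ estimate shows it is $L^2$-bounded, so $M_n^{(1)}$ converges almost surely and in $L^2$ to a finite limit. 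Combined with the asymptotics $\prod_{k=0}^{n-1}(1+\gamma/T_k) \sim n^{\gamma/\beta} \cdot \Gamma((a+b)/\beta)/\Gamma((a+b+\gamma)/\beta)$, this delivers the almost sure convergence $R_n/n^{\gamma/\beta} \to R$ for some finite random variable $R$. Uniform boundedness of $\mathbb{E}[R_n^{(p)}/n^{p\gamma/\beta}]$ for every $p$, combined with the fact that $R_n \to \infty$ almost surely on the event $\{R>0\}$ (so that $R_n^{(p)}/R_n^p \to 1$), then upgrades this to moment convergence, giving $\mathbb{E}[R^p]=\mu_p$ for every $p \in \mathbb{N}$.

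It remains to identify $R$ in distribution. For independent $B \sim \mathrm{Beta}(a/\gamma, b/\gamma)$ and $M \sim \mathrm{ML}(\gamma/\beta, (a+b)/\beta)$, substituting the Beta moment formula \eqref{eq:moments beta} and the Mittag-Leffler moment formula recalled in Section~\ref{sec:urns}, a direct Gamma-function manipulation in which the factors $\Gamma((a+b)/\gamma+p)$ cancel between the two expressions yields $\mathbb{E}[B^p]\,\mathbb{E}[M^p] = \gamma^{-p}\mu_p$, so $\gamma \cdot B \cdot M$ has the same moment sequence as $R$. Since $\mu_p^{1/(2p)}$ grows only polynomially in $p$ (owing to the $1/\Gamma((a+b+p\gamma)/\beta)$ factor), Carleman's condition is satisfied and the distribution is uniquely determined by its moments. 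The degenerate case $b=0$ is covered by the convention $\mathrm{Beta}(a/\gamma, 0)=1$. The main obstacle will be the Gamma-function bookkeeping in this final step: the precise cancellations between the Beta and Mittag-Leffler moment formulas, hinging on the match between $a/\gamma$ and $(a+b)/\beta$ appearing in both, are what make the identification succeed.
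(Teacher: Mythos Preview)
Your argument is correct and is a genuinely different route from the paper's. You proceed analytically: the rising-factorial martingales $M_n^{(p)}$ yield exact moment asymptotics, the $p=1,2$ cases give almost sure convergence, and the identification $\mathbb{E}[R^p]=\mu_p=\gamma^p\,\mathbb{E}[B^p]\,\mathbb{E}[M^p]$ together with Carleman's condition pins down the law. The one step you sketch somewhat loosely---passing from $\mathbb{E}[R_n^{(p)}]\sim\mu_p n^{p\gamma/\beta}$ to $\mathbb{E}[R^p]=\mu_p$---is fine: expand $R_n^{(p)}$ as $R_n^p$ plus lower powers of $R_n$, observe that the lower-order terms contribute $O(n^{(p-1)\gamma/\beta})=o(n^{p\gamma/\beta})$ in expectation, and then use uniform integrability of $(R_n/n^{\gamma/\beta})^p$ (from the $(p{+}1)$-moment bound) to conclude.

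The paper instead argues by coupling. After scaling to $\gamma=1$, it observes that when $b=0$ the red weight is exactly $a$ plus the number of occupied tables in a Chinese restaurant process with parameters $(1/\beta,a/\beta)$, whose a.s.\ limit is $\mathrm{ML}(1/\beta,a/\beta)$ by Theorem~\ref{thm:urn3}. For $b>0$ it splits the initial red weight into ``light red'' (weight $a$) and ``dark red'' (weight $b$): light red against (dark red${}+{}$black) is the original triangular urn, light versus dark red is a P\'olya urn (Theorem~\ref{thm:urn2}), and total red against black is the $b=0$ case with initial weight $a+b$. This yields the product $\mathrm{Beta}(a,b)\cdot\mathrm{ML}(1/\beta,(a+b)/\beta)$ directly, with the independence coming for free from the independence of the two sub-urns. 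The paper's approach thus \emph{explains} the Beta${}\times{}$ML factorisation probabilistically and reuses results already stated, whereas yours is self-contained and recovers the factorisation via a Gamma-function cancellation; both are standard and complete.
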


(Note that, since the total weight in the urn at step $n$ is $a + b + n\beta$, we trivially deduce that the black weight $B_n = a + b + n\beta -R_n$ satisfies $B_n/n \to \beta$ almost surely.)  There is a vast literature on triangular urn schemes, which give rise to profoundly different asymptotic behaviour. We refer to Janson \cite{Janson} for an overview, and in particular to Theorems~1.3 and 1.7  therein which together imply the convergence of Theorem~\ref{thm:urn1} (but only in distribution). The almost sure convergence can, in fact, be deduced from Theorems~\ref{thm:urn2} and \ref{thm:urn3}. Observe first that we may reduce to the case $\gamma = 1$ by scaling. Now note that in the context of Theorem~\ref{thm:urn1} when $\gamma=1$ and $b=0$, the red weight evolves as $a$ plus the number of occupied tables in a Chinese restaurant process  with parameters $(1/\beta,a/\beta)$, and so the almost sure limit has $\mathrm{ML}(1/\beta,a/\beta)$ distribution. To treat the case $b > 0$, consider a refinement of the urn model in which the red colour comes in two variants, light and dark. Start with $a$ light red weight, $b$ dark red weight and 0 black weight. Sample a colour with probability proportional to its current weight in the urn. When black is drawn, add weight $\beta$ to black. When red is drawn in either of its variants, add weight 1 to that variant and weight $\beta-1$ to black. Clearly, light red and dark red + black taken together follow the $\beta$-triangular urn scheme with respective initial weights $a$ and $b$. Moreover, (1) the proportion of the total red weight which is light red converges almost surely to a random variable with $\mathrm{Beta}(a,b)$ distribution by Theorem~\ref{thm:urn2}, and (2) this evolution holds independently of that of the total proportion of red weight in the urn, which converges to a $\mathrm{ML}(1/\beta,(a+b)/\beta)$-distributed random variable, by the Chinese restaurant process as noted above.

We finally turn to the proof of Proposition~\ref{prop:urn}. The notation is introduced in the vicinity of its statement in Section \ref{sec:marginalsstable}.
 
\begin{proof}[Proof of Proposition~\ref{prop:urn}]
Imagine first not distinguishing between the different types of a colour, i.e.\ consider the evolution of
\[
X^{a,b,c}_i(n) = X_i^a(n) + X^b_i(n) + X^c_i(n), \quad 1 \le i \le k.
\]
Then $(X^{a,b,c}_1(n), \ldots, X^{a,b,c}_k(n))_{n \ge 0}$ performs a classical P\'olya's urn in which we always add weight $\alpha$ of the colour picked, and which is started from
\[
(X^{a,b,c}_1(0), \ldots, X^{a,b,c}_k(0)) = (\gamma_1, \ldots, \gamma_k).
\]
So we have
\begin{equation} \label{eqn:(abc)limit}
\frac{1}{\alpha n} (X^{a,b,c}_1(n), \ldots, X^{a,b,c}_k(n)) \to (D_1, \ldots, D_k)
\end{equation}
almost surely as $n \to \infty$, where $(D_1, \ldots, D_k) \sim \mathrm{Dir}(\gamma_1/\alpha, \ldots, \gamma_k/\alpha)$.  Observe that $(X^{a,b,c}_i(n) - \gamma_i)/\alpha$ is the number of times by step $n$ that colour $i$ has been picked.

Now consider the triangular sub-urn which just watches the evolution of colour $i$, which doesn't distinguish between types $a$ and $b$, but does distinguish type $c$.  In particular, at each step we pick either type $\{a,b\}$ or type $c$ with probability proportional to its current weight.  If we pick $\{a,b\}$, we add $1$ to its weight and $\alpha-1$ to the weight of $c$; if we pick $c$, we simply add weight $\alpha$ to c. Write $Y^{a,b}_i(n)$ and $Y^c_i(n)$ for the weights after $n$ steps within this urn, with $Y_i^{a,b}(0) = \gamma_i$ and $Y^c_i(0) = 0$. Then by Theorem \ref{thm:urn1}, we have
\begin{equation} \label{eqn:(ab)climit}
\frac{1}{n^{1/\alpha}} Y_i^{a,b}(n) \to R_i, \quad \frac{1}{\alpha n} Y^c_i(n) \to 1,
\end{equation}
almost surely as $n \to \infty$, where $R_i \sim \mathrm{ML}(1/\alpha, \gamma_i/\alpha)$. Moreover, the number of times we add to type $a$ or $b$ is $Y_i^{a,b}(n) - \gamma_i$.

Now consider the sub-urn which just watches the evolution of types $a$ and $b$ of colour $i$. So if we pick $a$, we add weight $\alpha-1$ to $a$ and $2-\alpha$ to $b$, whereas if we pick $b$ we just add weight $1$ to $b$.  Write $Z^a_i(n)$ and $Z^b_i(n)$ for the weights of types $a$ and $b$ after $n$ steps of this sub-urn, with $Z^a_i(0) = \gamma_i$ and $Z^b_i(0) = 0$. Then again by Theorem \ref{thm:urn1} we have
\begin{equation} \label{eqn:ablimit}
\frac{1}{(\alpha-1) n^{\alpha-1}}Z^a_i(n) \to \bar{R}_i, \quad \frac{1}{n} Z^b_i(n) \to 1
\end{equation}
almost surely, where $\bar{R}_i\sim  \mathrm{ML}(\alpha-1, \gamma_i)$. Finally, observe that the full urn process may be decomposed as follows:
\begin{align*}
X_i^a(n) & = Z_i^a\left(Y_i^{a,b}\left(\frac{X^{a,b,c}_i(n) -\gamma_i}{\alpha}\right) - \gamma_i\right) \\
X_i^b(n) & = Z_i^b\left(Y_i^{a,b}\left(\frac{X^{a,b,c}_i(n) -\gamma_i}{\alpha}\right) - \gamma_i\right) \\
X_i^c(n) & = Y_i^{c}\left(\frac{X^{a,b,c}_i(n) -\gamma_i}{\alpha}\right),
\end{align*}
where the processes $(X_1^{a,b,c}(n), \ldots, X_k^{a,b,c}(n))_{n \ge 0}$,
$(Y_i^{a,b}(n), Y_i^{c}(n))_{n \ge 0}$ \ for $1 \le i \le k$, and $(Z_i^a(n), Z_i^b(n))_{n \ge 0}$ for $1 \le i \le k$,
are all independent.  The claimed results then follow by composing the limits (\ref{eqn:(abc)limit}), (\ref{eqn:(ab)climit}) and (\ref{eqn:ablimit}). 
\end{proof}

\begin{rem}
\label{rem:identityMLDir}
The following statements follow using similar arguments:
\[
(D_1^{1/\alpha} R_1, \ldots, D_k^{1/\alpha} R_k) \equidist R \cdot (\tilde{D}_1, \ldots, \tilde{D}_k),
\]
where $R \sim \mathrm{ML}(1/\alpha, \gamma/\alpha)$ is independent of $(\tilde{D}_1, \ldots, \tilde{D}_k) \sim \mathrm{Dir}(\gamma_1, \ldots, \gamma_k)$, and
\[
R_i^{\alpha-1} \bar{R}_i  \sim \mathrm{ML}(1-1/\alpha,\gamma_i/\alpha) 
\]
for $1 \le i \le k$.
\end{rem}

\section*{Acknowledgements}

B.H. and D.S.'s research is partially supported by the ANR GRAAL ANR-14-CE25-0014.
C.G.'s research is supported by EPSRC Fellowship EP/N004833/1.  She is grateful to LIX, \'Ecole polytechnique for two professeur invit\'e positions in November 2016 and November 2017, and for funding from Universit\'e Sorbonne Paris Nord as part of the Projet MathStic to pay for a week's visit in March 2018 and for a professeur invit\'e position in September 2018, all of which facilitated work on this project.

\bibliographystyle{siam}
\bibliography{stable}
\end{document}